\documentclass[reqno,11pt,twoside]{article}
\usepackage[hmargin=1.25in, vmargin=1.25in, a4paper, centering]{geometry}
\setlength{\abovecaptionskip}{2pt plus 2pt minus 1pt}
\setlength{\belowcaptionskip}{10pt plus 0pt minus 0pt}
\addtolength{\topskip}{0pt plus 10pt}
\usepackage{amsmath, amsthm,amssymb}
\usepackage{fourier}
\usepackage{ebgaramond}
\usepackage{graphicx}
\usepackage{xcolor}
\usepackage[ font=small, labelfont=bf, labelsep=colon]{caption}
\usepackage{etoolbox}
\usepackage{longtable,booktabs,array}
\usepackage{algorithm}
\usepackage{algpseudocode}
\usepackage{marvosym}
\usepackage[section]{placeins}
\usepackage{tocloft}

\numberwithin{equation}{section}
\theoremstyle{definition}
\newtheorem{defi}{Definition}[section]

\theoremstyle{plain}
\newtheorem{theorem}[defi]{Theorem}
 \newtheorem{prop}[defi]{Proposition}
\newtheorem{lemma}[defi]{Lemma}
\newtheorem{cor}[defi]{Corollary}

\theoremstyle{remark}
\newtheorem{remark}[defi]{Remark}
\newcommand{\addQEDstyle}[2]{\AtBeginEnvironment{#1}{\pushQED{\qed}\renewcommand{\qedsymbol}{#2}}\AtEndEnvironment{#1}{\popQED}}
\addQEDstyle{remark}{$\blacktriangleleft$}
\newcommand{\R}{\mathbb{R}}

\newcommand{\ZZ}{\mathbb{Z}}
\newcommand{\N}{\mathcal{N}}

\newcommand{\Ph}{\mathbb{P}}

\newcommand{\Dir}{{\mathrm{D}}}
\newcommand{\Neu}{{\mathrm{N}}}
\newcommand{\dr}{\mathrm{d}}

\renewcommand{\epsilon}{\varepsilon}
\newcommand{\WeylC}[1]{C_{#1}}
\newcommand{\WeylCB}[1]{C_{\mathrm{b},#1}}
\newcommand{\disk}{\mathbb{D}}
\newcommand{\Pt}{\mathcal{P}}
\newcommand{\Lfloor}{\left\lfloor}
\newcommand{\Rfloor}{\right\rfloor}
\newcommand{\entire}[1]{\Lfloor #1 \Rfloor}
\newcommand{\ceiling}[1]{\left\lceil#1\right\rceil}
\newcommand{\Q}{\mathbb{Q}}
\DeclareMathOperator{\cosU}{\overline{cos}}
\DeclareMathOperator{\cosL}{\underline{cos}}
\DeclareMathOperator{\arccosU}{\overline{arccos}}
\DeclareMathOperator{\arccosL}{\underline{arccos}}

\newcommand{\ball}[1]{\mathbb{B}^{#1}}

\makeatletter
\setlength{\@fptop}{0pt}
\makeatother
\renewcommand\footnotemark{}
\usepackage[nobottomtitles,pagestyles]{titlesec}
\titleformat{\section}
{\normalfont\large\bfseries}
{\filcenter\S\thesection.}{1ex}{\filcenter}
\widenhead*{0.5in}{0.5in}
\renewpagestyle{plain}[\bfseries]{\footrule\setfoot{}{\thepage}{}}
\newpagestyle{mystyle}[\bfseries]{
\headrule
\sethead[\thepage][][P\'{o}lya's conjecture for Euclidean balls]{N. Filonov, M. Levitin, I. Polterovich, and D. A. Sher}{}{\thepage}
}
\usepackage[colorlinks,allcolors=blue,pagebackref]{hyperref}
\usepackage{pifont}
\renewcommand*{\backrefalt}[4]{%
\ifcase #1 %
No citations%
\or
\ding{43}~p.~#2%
\else
\ding{43}~pp.~#2%
\fi}
\newcommand{\mydoi}[1]{\href{https://doi.org/#1}{doi: #1}}
\newcommand{\myarXiv}[1]{\href{https://arxiv.org/abs/#1}{arXiv: #1}}
    \setcounter{topnumber}{2}
    \setcounter{bottomnumber}{2}
    \setcounter{totalnumber}{4}     
    \setcounter{dbltopnumber}{2}    
\begin{document}
\thispagestyle{plain}
\title{%
P\'{o}lya's conjecture for Euclidean balls%
\footnote{A \texttt{Mathematica} script used for a computer-assisted part of the paper is available for download at \url{https://michaellevitin.net/polya.html}}
\footnote{{\bf MSC(2020): }Primary 35P15. Secondary 35P20, 33C10, 11P21.}%
\footnote{{\bf Keywords: } Laplacian, eigenvalues, Weyl's law, lattice points, Bessel functions, Bessel phase functions, zeros of Bessel functions and their derivaives}%
}
\author{
Nikolay Filonov
\thanks{%
\textbf{N. F.: }St. Petersburg Department
of Steklov Institute of Mathematics of RAS,
Fontanka 27, 191023, St.Petersburg, Russia;
St. Petersburg State University,
University emb. 7/9,
199034, St.Petersburg, Russia; 
\href{mailto:filonov@pdmi.ras.ru}{\nolinkurl{filonov@pdmi.ras.ru}}%
}
\and
Michael Levitin\hspace{-3ex}
\thanks{%
\textbf{M. L.: }Department of Mathematics and Statistics, University of Reading, 
Pepper Lane, Whiteknights, Reading RG6 6AX, UK;
\href{mailto:M.Levitin@reading.ac.uk}{\nolinkurl{M.Levitin@reading.ac.uk}}; \url{https://www.michaellevitin.net}%
}
\and 
Iosif Polterovich
\thanks{%
\textbf{I. P.: }D\'e\-par\-te\-ment de math\'ematiques et de statistique, Univer\-sit\'e de Mont\-r\'eal, 
CP 6128 succ Centre-Ville, Mont\-r\'eal QC  H3C 3J7, Canada;
\href{mailto:iossif@dms.umontreal.ca}{\nolinkurl{iossif@dms.umontreal.ca}}; \url{https://www.dms.umontreal.ca/\~iossif}%
}
\and
David A. Sher
\thanks{%
\textbf{D. A. S.:  }Department of Mathematical Sciences, DePaul University, 2320 N. Kenmore Ave, 60614, Chicago, IL, USA;
\href{mailto:dsher@depaul.edu}{\nolinkurl{dsher@depaul.edu}}
}
}
\date{\small Revised version, 2 May 2023\\to appear in Invent. Math.} 
\maketitle

\begin{abstract}  The celebrated P\'{o}lya's conjecture (1954)  in spectral geometry  states that the eigenvalue counting functions of the Dirichlet and Neumann Laplacian on a bounded Euclidean domain  can be estimated from above and below, respectively, by the 
leading term of Weyl's asymptotics. P\'{o}lya's conjecture is known to be true for domains which tile Euclidean space, and, in addition, for some special domains in higher dimensions.  
In this paper, we prove  P\'{o}lya's conjecture for the disk, making it
the first non-tiling planar domain for which the conjecture is verified.  We also confirm  P\'{o}lya's conjecture
for arbitrary planar sectors, and, in the Dirichlet case, for balls of any dimension. Along the way, we develop
the known links between the spectral problems in the disk and certain lattice counting problems.
A key novel ingredient is the observation, made in recent work of the last named author, that
the corresponding eigenvalue and lattice counting functions are related not only asymptotically,
but in fact satisfy certain uniform bounds. Our proofs are purely  analytic, except for a rigorous computer-assisted argument needed to cover 
the short interval of values of the spectral parameter in the case of the Neumann problem in the disk.
\end{abstract}

{\small \tableofcontents}
\pagestyle{mystyle}
\section{Weyl's law and P\'{o}lya's conjecture}\label{sec:WL}
Let $\Omega\subset\mathbb{R}^d$ be a bounded domain. Consider the Dirichlet eigenvalue problem for the Laplacian $$-\Delta:=-\sum_{j=1}^d\frac{\partial^2}{\partial x_j^2}$$ in $\Omega$:
\begin{equation}\label{eq:dirL}
\begin{split}    
-\Delta u=\lambda u\qquad&\text{in }\Omega,\\ 
u=0\qquad&\text{on }\partial\Omega.
\end{split}
\end{equation}
It is well known that the spectrum of \eqref{eq:dirL} is discrete and consists of isolated eigenvalues of finite multiplicity accumulating to $+\infty$,
\[
0< \lambda_1(\Omega)\le \lambda_2(\Omega)\le\dots\le \lambda_n(\Omega)\le \dots,
\]
which we enumerate with account of multiplicities. 

Similarly, assuming additionally that $\partial\Omega$ is Lipschitz,  consider the Neumann eigenvalue problem 
\begin{equation}\label{eq:neuL}
\begin{split}    
-\Delta u=\mu u \qquad&\text{in }\Omega,\\ 
\partial_n u=0\qquad&\text{on }\partial\Omega,
\end{split}
\end{equation}
where $\partial_n u = \left.\langle \nabla u, n\rangle\right|_{\partial\Omega}$ denotes the normal derivative of $u$ with respect to the exterior unit normal $n$ on the boundary.    
The spectrum of \eqref{eq:neuL} again consists of isolated eigenvalues of finite multiplicity accumulating to $+\infty$,
\[
0=\mu_1(\Omega)\le \mu_2(\Omega)\le\dots\le \mu_n(\Omega)\le \dots,
\]
enumerated with account of multiplicities. 

Let, for $\lambda\in\R$, 
\[
\N^\Dir_\Omega(\lambda):=\#\left\{n: \lambda_n(\Omega)\le \lambda^2\right\}\qquad\text{and}\qquad\N^\Neu_\Omega(\lambda):=\#\left\{n: \mu_n(\Omega)\le \lambda^2\right\}
\]
denote the \emph{counting functions}\footnote{Strictly speaking, we are counting the number of eigenvalues less than or equal to a given $\lambda^2$, but such normalisation will be convenient to us throughout.} of the Dirichlet and Neumann eigenvalue problems on $\Omega$.\footnote{One can also define the counting functions using strict inequalities; this does not affect any of the results below.}
It follows from the variational principles for \eqref{eq:dirL} and \eqref{eq:neuL} that 
\[
\N^\Dir_\Omega(\lambda)\le \N^\Neu_\Omega(\lambda)
\]
for any $\lambda\ge0$.\footnote{This in fact can be improved to $\N^\Dir_\Omega(\lambda)+1\le \N^\Neu_\Omega(\lambda)$, see \cite{Fri} and \cite{Fil04}.} 
 
Under the assumptions stated above, the leading term asymptotics of the counting functions is given by  {\em Weyl's law} \cite{Wey11}, 
\begin{equation}\label{eq:Weylaw}
\N_\Omega(\lambda)= \WeylC{d} |\Omega|_d \lambda^d+R(\lambda),
\end{equation}
where $\N_\Omega(\lambda)$ denotes either $\N^\Dir_\Omega(\lambda)$ or $\N^\Neu_\Omega(\lambda)$,  $|\cdot|_d$ denotes the $d$-dimensional volume, $R(\lambda)=o\left(\lambda^d\right)$ as $\lambda\to+\infty$, and
\[
\WeylC{d}:=\frac{1}{(4\pi)^{\frac{d}{2}} \Gamma\left(\frac{d}{2}+1\right)}
\]
is the so-called \emph{Weyl constant}.  We refer to \cite{SV97} for a historical review, as well as numerous generalisations and improvements.

H. Weyl himself conjectured \cite{Wey12} a sharper version of  \eqref{eq:Weylaw} taking into account the boundary conditions: for $\Omega\subset\R^d$ with a piecewise smooth boundary, 
\begin{equation}\label{eq:twoterm}
\N_\Omega(\lambda)= \WeylC{d} |\Omega|_d \lambda^d \pm  \WeylCB{d} |\partial \Omega|_{d-1} \lambda^{d-1}+o\left(\lambda^{d-1}\right)\qquad\text{as }\lambda\to+\infty,
\end{equation}  
where the minus sign is taken for the Dirichlet boundary conditions and the plus sign for the Neumann ones, and  
\[
\WeylCB{d}:=\frac{1}{2^{d+1} \pi^\frac{d-1}{2}\Gamma\left(\frac{d+1}{2}\right)}.
\]
We note that for planar domains \eqref{eq:twoterm} takes the particularly simple form  
\begin{equation}\label{eq:twoterm2d}
\N_\Omega(\lambda)=\frac{\operatorname{Area}(\Omega)}{4\pi}\lambda^2 \pm \frac{\operatorname{Length}(\partial \Omega)}{4\pi}\lambda+o\left(\lambda\right).
\end{equation}

The two-term Weyl's law \eqref{eq:twoterm} remains open in full generality.  It has been proved by V. Ivrii \cite{Ivr80} under the condition that the set of 
periodic billiard trajectories in $\Omega$ has measure zero.  While this condition is conjectured to be satisfied for all Euclidean domains, it has been verified only for a few  classes,  such as convex analytic domains and polygons, see \cite{SV97} and references therein. Specifically for a disk, it was proved by N. Kuznetsov and B. Fedosov in \cite{kufe}.

Assuming that the two-term Weyl's asymptotics \eqref{eq:twoterm} holds for a domain $\Omega\subset\R^d$, we immediately obtain that for $\lambda$ above some \emph{sufficiently large but unspecified} value $\Lambda_1$ we have
\begin{equation}\label{eq:polya}
\N^\Dir_\Omega(\lambda)\le  \WeylC{d} |\Omega|_d \lambda^d  \le \N^\Neu_\Omega(\lambda).
\end{equation}
We refer also to \cite{Mel80} for results of the same kind in the Riemannian setting.

In 1954, G. P\'{o}lya \cite{Pol54} conjectured that the inequalities \eqref{eq:polya} hold for \emph{all} $\lambda\ge 0$.\footnote{In fact, P\'{o}lya's original conjecture was only for planar domains, and in a slightly different form.}
He later proved  this conjecture  in \cite{Pol61} for \emph{tiling domains} $\Omega$: that is, domains such that $\R^d$ can be covered, up to a set of measure zero, by a disjoint union of copies of $\Omega$. In fact, in the Neumann case, some additional assumptions were imposed in \cite{Pol61} that have been removed in \cite{Kel66}. It has been also shown that P\'{o}lya's conjecture in the Dirichlet case holds for a Cartesian product $\Omega=\Omega_1\times\Omega_2\subset \mathbb{R}^{d_1+d_2}$ if it holds for $\Omega_1\subset\mathbb{R}^{d_1}$ with $d_1\ge 2$, and $\Omega_2\subset\mathbb{R}^{d_2}$ is bounded,  see \cite[Theorem 2.8]{Lap}. 
For general domains, somewhat weakened versions of \eqref{eq:polya} are known to hold as a consequence of the so-called \emph{Berezin--Li--Yau} inequalities: we have
\[
\left(\frac{d}{d+2}\right)^{d/2}\N^\Dir_\Omega(\lambda)\le  \WeylC{d} |\Omega|_d \lambda^d \le \frac{d+2}{2} \N^\Neu_\Omega(\lambda) 
\]
for all $\lambda\ge 0$,  see \cite{LiYau}, \cite{Kro}, and \cite{Lap}. We refer also to \cite{Lin17}, \cite{KLS19}, \cite{FLP}, and \cite{Fre22} for some recent results on P\'olya's conjecture and further interesting links to other problems in spectral geometry.  

\begin{remark}\label{rem:polyaev}
P\'{o}lya's conjecture \eqref{eq:polya} can be equivalently restated as the inequalities for the eigenvalues (instead of the counting functions), 
\begin{equation}\label{eq:polyaev}
\mu_{n+1}(\Omega)\le \left(\WeylC{d} |\Omega|_d\right)^{-\frac{2}{d}} n^{\frac{2}{d}}\le\lambda_n(\Omega)
\end{equation} 
for all $n\ge 1$. It is known that inequalities \eqref{eq:polyaev} hold for any domain  in any dimension for $n=1, 2$.
 In particular, for $n=1$ this follows from the celebrated Faber--Krahn and Szeg\H{o}--Weinberger inequalities, and for $n=2$ in the Dirichlet case from the Krahn--Szego inequality, see \cite{Henrot}. For $n=2$ in the Neumann case, we refer to \cite{GNP09}, \cite{BuHe19}.
These are the only eigenvalues for which it is known in full generality.  We refer also to \cite{Fre} for further results on the validity of the Dirichlet P\'olya's conjecture for low eigenvalues in higher dimensions. 
\end{remark}

Remarkably, since balls do not tile the space, P\'{o}lya's conjecture has so far remained open for Euclidean balls, including planar disks.\footnote{As stated in \cite[p. 638] {Lap12}: ``Remarkably this conjecture still remains open even for such a simple domain as the disc, where the eigenvalues of the Dirichlet Laplacians could be calculated via the roots of Bessel functions.'' See also \cite[p. 1366]{FLW09} and \cite[p. 66]{lauarxiv}.} 
Although all the eigenvalues of the Dirichlet and Neumann Laplacians on the unit disk are explicitly known in terms of zeros of the Bessel functions or their derivatives, see \S\ref{sec:evdisk} below, in each case the spectrum is given by a \emph{two-parametric} family, and rearranging it into a single monotone sequence appears to be an unfeasible task.   

\medskip

Let $\ball{d}\subset\mathbb{R}^d$ be the $d$-dimensional unit ball. Then $|\ball{d}|_d=\frac{\pi^{d/2}}{\Gamma\left(\frac{d}{2}+1\right)}$. Therefore the leading Weyl's term  in \eqref{eq:Weylaw} for $\ball{d}$ becomes
\begin{equation}\label{eq:Wd}
W_d(\lambda):=\WeylC{d} |\ball{d}|_d \lambda^d= w_d\lambda^d,\qquad w_d=\frac{1}{2^d\left(\Gamma\left(\frac{d}{2}+1\right)\right)^2},
\end{equation}
in particular 
\[
W_2(\lambda)=\frac{\lambda^2}{4}\qquad\text{and}\qquad W_3(\lambda)=\frac{2\lambda^3}{9\pi}.
\]

The main results of this paper address the validity of P\'{o}lya's conjecture for disks and balls.  Namely, we prove the following results.

\begin{theorem}\label{thm:polyaballD}
The Dirichlet P\'{o}lya's conjecture for the unit ball  holds in any dimension $d\ge 2$, that is we have
\[
\N^\Dir_{\ball{d}}(\lambda)<W_d(\lambda)
\]
for all $\lambda>0$.
\end{theorem}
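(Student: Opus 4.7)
My plan is to exploit the explicit diagonalisation of the Dirichlet Laplacian on $\ball{d}$ together with a sharp non-asymptotic bound on Bessel zero counts. Separating variables in spherical coordinates, the Dirichlet eigenvalues of $\ball{d}$ are exactly $j_{\nu_\ell, k}^2$ for $\ell \ge 0$ and $k \ge 1$, where $\nu_\ell := \ell + (d-2)/2$ and $j_{\nu, k}$ denotes the $k$th positive zero of $J_\nu$; each such eigenvalue is counted with multiplicity $h_\ell^{(d)} := \dim \HH_\ell(S^{d-1})$, the dimension of the space of degree-$\ell$ spherical harmonics on $S^{d-1}$. This yields the exact expression
\[
\N^\Dir_{\ball{d}}(\lambda) = \sum_{\ell \ge 0} h_\ell^{(d)}\, N_{\nu_\ell}(\lambda), \qquad N_\nu(\lambda) := \#\{k \ge 1 : j_{\nu, k} \le \lambda\},
\]
reducing the problem to uniform control of the Bessel zero counts $N_\nu(\lambda)$.

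The central step is a \emph{uniform} (not merely asymptotic) upper bound on $N_\nu(\lambda)$ of the form $N_\nu(\lambda) \le \tfrac{1}{\pi}\Psi(\nu, \lambda)$, where
\[
\Psi(\nu, \lambda) := \bigl(\sqrt{\lambda^2 - \nu^2} - \nu \arccos(\nu/\lambda)\bigr)_+,
\]
with strict inequality whenever $\lambda$ is not itself an eigenfrequency $j_{\nu, k}$. This is precisely the lattice-counting estimate highlighted in the abstract; its importance is that no additive $O(1)$ slack appears, since such slack, once summed over $\ell$, would exceed the Weyl term $W_d(\lambda)$. I would prove this via sharp monotonicity and convexity properties of the Bessel phase function $\theta_\nu$, of which $\Psi(\nu, \lambda)$ is essentially the classical WKB approximation (and the inequality corresponds to $\theta_\nu(\lambda) \le \Psi(\nu, \lambda)$).

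The final step is to sum over $\ell$ and compare with the Weyl term. A direct computation, reducing to polar coordinates in the classically allowed region $\{(\xi, \eta): \xi^2 + \eta^2 \le \lambda^2\}$, yields the identity
\[
\frac{1}{\pi}\int_0^\lambda h^{(d)}(\nu)\, \Psi(\nu, \lambda)\, \dr\nu = W_d(\lambda),
\]
where $h^{(d)}(\nu)$ is the natural polynomial extension of the multiplicities $h_\ell^{(d)}$ (for instance $h^{(2)} \equiv 2$, $h^{(3)}(\nu) = 2\nu$). I would then establish the discrete-to-continuous comparison
\[
\sum_{\ell \ge 0} h_\ell^{(d)}\, \Psi(\nu_\ell, \lambda) \;\le\; \int_0^\lambda h^{(d)}(\nu)\, \Psi(\nu, \lambda)\, \dr\nu
\]
via an Abel-summation or Euler--Maclaurin analysis, exploiting the convexity of $\Psi(\cdot, \lambda)$ on $[0, \lambda]$, the half-integer shift $\nu_\ell - \ell = (d-2)/2$, and the vanishing of $\Psi(\cdot, \lambda)$ at the turning point $\nu = \lambda$.

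The main obstacle will be securing the Bessel zero bound in the middle step with no additive slack: the naive WKB error is $O(1)$ per term, and the sum of such errors easily exceeds $W_d(\lambda)$, so a genuinely sharp phase inequality is needed. A secondary subtlety is the discrete-to-continuous comparison near the turning point $\nu \approx \lambda$, where $\partial_\nu \Psi$ blows up, and near $\nu = 0$, where $h^{(d)}(0)$ and $h_0^{(d)}$ may disagree in low dimensions; both boundary regions require careful accounting. Strict inequality $\N^\Dir_{\ball{d}}(\lambda) < W_d(\lambda)$ for every $\lambda > 0$ then follows because, whenever $\lambda$ is not an eigenfrequency, the Bessel bound is strict at each contributing $\nu_\ell$; the countably many exceptional $\lambda$ that do hit an eigenfrequency may be handled by a standard left/right-limit argument on the monotone step function $\N^\Dir_{\ball{d}}$.
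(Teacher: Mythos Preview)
Your central Bessel-zero bound $N_\nu(\lambda)\le \tfrac{1}{\pi}\Psi(\nu,\lambda)=G_\lambda(\nu)$ is false, and this is not a minor slip but the crux of the matter. Take $\nu=0$ and $\lambda=j_{0,1}\approx 2.4048$: then $N_0(\lambda)=1$, while $G_\lambda(0)=\lambda/\pi\approx 0.766<1$. The phase-function argument you sketch actually yields $\theta_\nu(\lambda)<\Psi(\nu,\lambda)-\tfrac{\pi}{4}$, which translates (since the zeros of $J_\nu$ are exactly where $\theta_\nu\in -\tfrac{\pi}{2}+\pi\mathbb{Z}$) into
\[
N_\nu(\lambda)\le\bigl\lfloor G_\lambda(\nu)+\tfrac14\bigr\rfloor,
\]
and the $+\tfrac14$ cannot be removed; it is precisely the positive slack you said must be absent. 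Your own remark that ``such slack, once summed over $\ell$, would exceed the Weyl term'' is therefore exactly the obstacle you face.

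Your fallback plan --- the discrete-to-continuous comparison $\sum_\ell h_\ell^{(d)}\Psi(\nu_\ell,\lambda)\le\int_0^\lambda h^{(d)}(\nu)\Psi(\nu,\lambda)\,\dr\nu$ via convexity --- also fails, and in fact goes the wrong way. For $d=2$ (where $\nu_\ell=\ell$ and there is no helpful half-integer shift), the trapezoidal rule for the convex function $G_\lambda$ gives $G_\lambda(0)+2\sum_{m\ge1}G_\lambda(m)\ge 2\int_0^\lambda G_\lambda=\tfrac{\lambda^2}{4}$, the reverse of what you need. So both steps (the Bessel bound and the summation) individually fail; the errors do not cancel in any evident way.

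What the paper does instead is keep the floor and the $+\tfrac14$, reducing the problem to showing
\[
\sum_m \kappa_{d,m}\bigl\lfloor G_\lambda(m+\tfrac{d}{2}-1)+\tfrac14\bigr\rfloor< W_d(\lambda).
\]
This is a genuine lattice-point inequality, not an Euler--Maclaurin estimate: the key is that the shift $\tfrac14$ is strictly less than the maximal slope $\tfrac12$ of $G_\lambda$, so on each horizontal strip the number of ``bad'' abscissae $m$ where $\lfloor G_\lambda(m)+\tfrac14\rfloor>G_\lambda(m)$ is at most half the total, and the ``good'' ones (where the floor loses roughly $\tfrac12$) more than compensate. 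That combinatorial balance, not convexity alone, is what drives the proof in the planar case; the higher-dimensional case is then handled by a dimension-reduction identity that rewrites $\Pt^\Dir_d$ as a positive combination of two-dimensional counts.
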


Our results in the Neumann case are restricted to the case $d=2$. Higher-dimensional Neumann problems are harder, and we intend to treat them in a subsequent paper.

We first state
\begin{lemma}\label{lem:Lambda0} 
The Neumann  P\'{o}lya's conjecture for $\disk=\ball{2}$ is valid for all $\lambda\in\left[0, \Lambda_0\right]$, where 
\[
\Lambda_0:=2\sqrt{3}.
\]
\end{lemma}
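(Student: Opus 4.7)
The plan is to prove the lemma by directly enumerating the Neumann eigenvalues of $\disk$ that lie in $[0,12]$ and then verifying the desired inequality $\N^\Neu_\disk(\lambda)\ge W_2(\lambda)=\lambda^2/4$ by a pointwise comparison on each plateau of the counting function. Recall (to be reviewed in the section on the disk spectrum referred to in the introduction) that the Neumann eigenvalues of the unit disk are $\mu=(j'_{m,k})^2$, where $j'_{m,k}$ denotes the $k$-th non-negative zero of the Bessel function derivative $J'_m$, with multiplicity~$2$ for $m\ge 1$ (coming from the angular modes $\cos(m\theta)$ and $\sin(m\theta)$) and multiplicity~$1$ for $m=0$. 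From standard numerics, the only eigenvalues in $[0,12]$ are $0$, the double eigenvalues $(j'_{1,1})^2\approx 3.39$ and $(j'_{2,1})^2\approx 9.33$; the next candidates $(j'_{0,1})^2=j_{1,1}^2\approx 14.68$, $(j'_{3,1})^2\approx 17.65$, $(j'_{1,2})^2\approx 28.4$, etc.\ all exceed $12$.

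Since $\N^\Neu_\disk$ is a non-decreasing step function and $W_2$ is continuous and strictly increasing, the inequality $\N^\Neu_\disk(\lambda)\ge W_2(\lambda)$ need only be checked on the largest $\lambda$ in each plateau of $\N^\Neu_\disk$. With the enumeration above, the plateaus on $[0,2\sqrt{3}]$ are $[0,j'_{1,1})$ with $\N^\Neu_\disk=1$, $[j'_{1,1},j'_{2,1})$ with $\N^\Neu_\disk=3$, and $[j'_{2,1},2\sqrt{3}]$ with $\N^\Neu_\disk=5$. The three required checks reduce to
\[
(j'_{1,1})^2\le 4,\qquad (j'_{2,1})^2\le 12,\qquad 5\ge W_2(2\sqrt{3})=3,
\]
of which the last is trivial. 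Thus the entire lemma collapses to the two explicit Bessel-zero bounds $j'_{1,1}\le 2$ and $j'_{2,1}\le 2\sqrt{3}$.

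The final step is to verify these two bounds rigorously. The cleanest route is a sign-change argument: since $J'_m$ has a unique zero $j'_{m,1}$ in $(0,j_{m,1})$ and is positive immediately before it, it suffices to show $J'_1(2)<0$ and $J'_2(2\sqrt{3})<0$. Both signs can be read off by using the identity $J'_m=\tfrac12(J_{m-1}-J_{m+1})$ together with the alternating power-series expansions of $J_{m\pm 1}$, retaining enough terms for a rigorous tail estimate via the alternating-series remainder bound. The step I expect to be the most delicate is the second one: at $x=2\sqrt{3}\approx 3.464$ the partial sums of $J_1$ and $J_3$ are each of order unity, and the subtraction $J_1-J_3$ involves some cancellation, so one must keep enough terms for the tail bound to control the difference. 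Nonetheless, the required margin $2\sqrt{3}-j'_{2,1}\approx 0.41$ is sizeable, so a modest truncation suffices, and once both sign checks are certified the lemma follows immediately from the reduction above.
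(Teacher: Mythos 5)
Your proposal is correct, but it takes a substantially more computational route than the paper. The paper proves the lemma in two lines by a variational argument: taking the $L^2$-orthogonal test functions $\{1,x,y\}$ in the Neumann Rayleigh quotient on $\disk$ gives $\mu_3(\disk)\le 4$ (since $R(x)=R(y)=\pi/(\pi/4)=4$), hence $\N^\Neu_\disk(\lambda)\ge 3$ for $\lambda\ge 2$ and trivially $\ge 1$ for $\lambda<2$; combined with the observation that $W_2(2\sqrt 3)=3$ exactly, this already gives $\N^\Neu_\disk\ge W_2$ on all of $[0,2\sqrt3]$. Two remarks on the comparison. First, your check $(j'_{1,1})^2\le 4$ is exactly the statement $\mu_2=\mu_3\le 4$, so your sign-change/power-series verification of $J'_1(2)<0$ is really re-proving the same inequality that the paper obtains almost for free from the Rayleigh quotient --- the variational route is shorter and avoids all Bessel-series tail estimates. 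Second, the paper does not need anything like your second check $(j'_{2,1})^2\le 12$: the threshold $\Lambda_0=2\sqrt3$ is deliberately tuned so that $W_2(\Lambda_0)=3$, which means the crude bound $\N^\Neu_\disk\ge 3$ for $\lambda\ge 2$ already closes the argument without ever needing to locate the third distinct Neumann frequency $j'_{2,1}$. Finally, a small caution on your exposition: you assert equalities such as $\N^\Neu_\disk(\lambda)=5$ on $[j'_{2,1},2\sqrt3]$, which tacitly assumes a complete enumeration of the spectrum below $12$ (in particular that $j'_{0,2}=j_{1,1}$, $j'_{3,1}$, $j'_{1,2}$ all exceed $2\sqrt3$). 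Since the lemma only needs lower bounds on $\N^\Neu_\disk$, it would be cleaner --- and rigorously self-contained --- to phrase these as $\ge$ rather than $=$, which is in fact all your three ``required checks'' actually use.
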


\begin{proof} 
Taking the span of $\{1, x, y\}$ as a test space  in the Rayleigh quotient for the Neumann Laplacian on $\disk$ gives $\mu_3(\disk)\le 4$. Therefore,
\[
\N^\Neu_\disk(\lambda)\quad \ge \quad 
\begin{cases}
1,&\quad \lambda\in[0,2),\\
3,&\quad \lambda\ge 2,
\end{cases}\quad\ge\quad  \frac{\lambda^2}{4}\qquad\text{for }\lambda\in[0,\Lambda_0].
\]
\end{proof}

We then prove
\begin{theorem}\label{thm:polyadiskN}
The Neumann P\'{o}lya's conjecture for the unit disk holds for all 
\begin{equation}\label{eq:Lambda1}
\lambda\ge \Lambda_1:=\frac{6\pi}{3\pi-8}.
\end{equation}
\end{theorem}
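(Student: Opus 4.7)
The plan is to reduce the Neumann Pólya inequality $\N^\Neu_\disk(\lambda) \geq W_2(\lambda) = \lambda^2/4$ to a two-dimensional lattice-point estimate, by combining the explicit separation-of-variables description of Neumann eigenvalues on the disk with the uniform comparison between eigenvalue and lattice counting functions announced in the introduction.

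First I would rewrite $\N^\Neu_\disk(\lambda)$ in terms of Bessel derivative zeros. Writing $j_{k,\ell}'$ for the $\ell$-th positive zero of $J_k'$ ($k \geq 0$, $\ell \geq 1$), the nonzero Neumann eigenvalues of $\disk$ are $(j_{k,\ell}')^2$, with multiplicity $1$ when $k=0$ and multiplicity $2$ when $k \geq 1$. Adding the ground state $\mu_1=0$ yields the exact identity
\[
\N^\Neu_\disk(\lambda) = 1 + \#\{(k,\ell) : k \geq 0,\ \ell \geq 1,\ j_{k,\ell}' \leq \lambda\} + \#\{(k,\ell) : k \geq 1,\ \ell \geq 1,\ j_{k,\ell}' \leq \lambda\}.
\]

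Next I would invoke a uniform upper bound $j_{k,\ell}' \leq \Phi(k,\ell)$ valid for every admissible pair $(k,\ell)$, obtained either by inverting an explicit Bessel phase function (as the paper's keywords suggest) or by rigorising the McMahon expansion via Olver's uniform asymptotics. Since $\Phi(k,\ell) \leq \lambda$ implies $j_{k,\ell}' \leq \lambda$, this converts the identity above into
\[
\N^\Neu_\disk(\lambda) \geq 1 + \mathcal{L}_0(\lambda) + 2\,\mathcal{L}_+(\lambda),
\]
where $\mathcal{L}_0(\lambda)$ and $\mathcal{L}_+(\lambda)$ count integer pairs $(k,\ell)$ with $\Phi(k,\ell) \leq \lambda$ in the ranges $k=0$ and $k \geq 1$, respectively.

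The third step is to lower-bound $\mathcal{L}_0(\lambda) + 2\mathcal{L}_+(\lambda)$ by the area of the region $R(\lambda) = \{(x,y) \in \mathbb{R}^2 : x \geq 0,\ y \geq 1,\ \Phi(x,y) \leq \lambda\}$, with the $k \geq 1$ strip counted with weight $2$. By construction this weighted area should equal $\lambda^2/4 + c\lambda + O(1)$ with positive $c$ reproducing the Neumann boundary contribution in \eqref{eq:twoterm2d}, and the lattice-to-area discrepancy is controlled by a bounded perimeter term via convexity/monotonicity of $\partial R(\lambda)$. Putting everything together one expects a bound of the shape
\[
\N^\Neu_\disk(\lambda) \geq \frac{\lambda^2}{4} + \alpha \lambda - \beta
\]
with explicit positive constants $\alpha,\beta$, and the specific bound $\Phi$ should be calibrated so that solving $\alpha\lambda \geq \beta$ produces exactly $\lambda \geq \Lambda_1 = 6\pi/(3\pi - 8)$.

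The main obstacle will be producing a uniform upper bound on $j_{k,\ell}'$ that is simultaneously explicit for every $(k,\ell)$, sharp through the delicate transition regime $j_{k,\ell}' \sim k$ where Bessel functions cross over from exponentially decaying to oscillatory behaviour, and tight enough that the resulting threshold $\Lambda_1 \approx 13.27$ stays small, so that the remaining gap $[\Lambda_0,\Lambda_1]$ is short enough to be closed by the computer-assisted argument announced in the abstract. A secondary nuisance is keeping rigorous track of all boundary rounding in the lattice-to-area comparison so that the linear positive margin is not swallowed by perimeter losses.
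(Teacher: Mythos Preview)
Your overall architecture matches the paper's: pass from $\N^\Neu_\disk$ to a lattice count via a uniform Bessel-phase bound, then compare lattice count to area. However, you have inverted where the difficulty lies, and the step you dismiss as a ``secondary nuisance'' is in fact the crux.

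The Bessel step is clean. The phase function $\phi_\nu$ of $J'_\nu$ yields uniformly $\#\{k:j'_{\nu,k}\le\lambda\}\ge\lfloor G_\lambda(\nu)+\tfrac34\rfloor$ with $G_\lambda(\nu)=\tfrac1\pi(\sqrt{\lambda^2-\nu^2}-\nu\arccos\tfrac\nu\lambda)$ (Proposition~\ref{prop:mainbound}); the shift $\tfrac34$ is forced by the phase asymptotic $\phi_\nu(\lambda)=\lambda-\tfrac\pi2(\nu-\tfrac12)+O(\lambda^{-1})$ and is not a free parameter you can calibrate. The genuine obstacle is the comparison of $\sum_m\lfloor G_\lambda(m)+\tfrac34\rfloor$ with $\int G_\lambda$. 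A generic convexity/perimeter bound gives a discrepancy of size $O(\lambda)$ with \emph{indeterminate sign}, which is precisely the scale at which P\'olya's inequality is decided, so it cannot by itself produce a positive linear margin. The paper's Theorem~\ref{thm:countNdisk} exploits two specific features together: the shift $\tfrac34$ and the Lipschitz bound $|G'_\lambda|\le\tfrac12$. These guarantee that in each horizontal strip $n-\tfrac34\le G_\lambda<n+\tfrac14$ the ``bad'' integers $m$ (those with $n\le G_\lambda(m)<n+\tfrac14$, where the floor underperforms) number at most half of all integers in that strip; drop the derivative condition and this fails. The bound that emerges is not of your expected form $\tfrac{\lambda^2}{4}+\alpha\lambda-\beta$ but rather $\Pt_2^\Neu(\lambda)>\tfrac{\lambda^2}{4}+\tfrac14\bigl(3G_\lambda^{-1}(\tfrac14)-\lambda(1+\tfrac4\pi)-3\bigr)$, and a separate estimate on $G_\lambda^{-1}(\tfrac14)$ (Lemma~\ref{lem:Ginv} and Proposition~\ref{prop:R2}) is required to extract the explicit threshold $\Lambda_1=\tfrac{6\pi}{3\pi-8}$.
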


We note that $\Lambda_0>3$ and $\Lambda_1<14$, so we already have the validity of the Neumann P\'olya's conjecture for the disk for all $\lambda$ outside the interval $\left(3, 14\right)$. 

\begin{theorem}\label{thm:polyadiskN2}
The Neumann P\'{o}lya's conjecture for the unit disk holds for all $\lambda\in[3,14]$.
\end{theorem}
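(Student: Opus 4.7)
The plan is to verify the lower bound $\N^\Neu_\disk(\lambda)\ge \lambda^2/4$ throughout $[3,14]$ by a rigorous computer-assisted enumeration of Neumann eigenvalues. The Neumann spectrum of $\disk$ is explicit: apart from the zero eigenvalue, it consists of the squares $(j'_{m,k})^2$, where $j'_{m,k}$ is the $k$-th positive zero of the Bessel function derivative $J'_m$, each contributing multiplicity $2$ for $m\ge 1$ and multiplicity $1$ for $m=0$. Since $\N^\Neu_\disk$ is a right-continuous nondecreasing step function while $\lambda^2/4$ is continuous and strictly increasing, the inequality on $[3,14]$ holds if and only if on every interval of constancy $[\lambda_j,\lambda_{j+1})$ of $\N^\Neu_\disk$ the constant value dominates $\lambda_{j+1}^2/4$; equivalently, $\N^\Neu_\disk(\lambda_*^-)\ge \lambda_*^2/4$ at every jump point $\lambda_*$ inside the interval.

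To execute this check, first I would bound the set of indices that matter. The elementary inequality $j'_{m,1}>m$ for $m\ge 1$, combined with the McMahon-type asymptotics $j'_{m,k}\sim (k+m/2-3/4)\pi$, restricts the pairs $(m,k)$ with $j'_{m,k}\le 14$ to an explicit finite list. Each such zero would then be enclosed in a tight certified interval $[a_{m,k},b_{m,k}]$ via verified interval arithmetic — either through a Newton--Kantorovich argument for $J'_m$ based on rigorous bounds for $J'_m$ and $J''_m$ on the bracket, or, more efficiently, via the monotone Bessel phase function whose inversion at known angular values yields certified enclosures. Sorting the intervals, merging any that overlap while keeping track of summed multiplicities, and reading off the constant value $c_j$ taken by $\N^\Neu_\disk$ on each gap between consecutive enclosures reduces the theorem to a finite list of explicit numerical inequalities $c_j\ge b_{m,k}^2/4$, one at the right endpoint of each enclosure, all of which are then verified arithmetically.

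The main obstacle is rigour rather than principle. On $[3,14]$ there are on the order of $W_2(14)\approx 49$ Neumann eigenvalues to enumerate and sort with certainty, and the slack $c_j-b_{m,k}^2/4$ shrinks precisely when $\lambda_*^2/4$ is close to the integer value attained by $\N^\Neu_\disk(\lambda_*^-)$. Consequently the enclosures must be computed with enough precision for each strict inequality to be verifiable, and any near-coincidence between zeros $j'_{m,k}$ and $j'_{m',k'}$ with $m\ne m'$ must be resolved without recourse to floating-point arithmetic — either by certifying separation via rigorous sign tests, or, when this fails, by collapsing the pair into a single combined jump whose multiplicity is the sum. This is exactly what the accompanying \texttt{Mathematica} script advertised in the footnote of the paper is designed to carry out, and its correct execution completes the proof of the theorem on $[3,14]$.
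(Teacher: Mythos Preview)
Your plan is sound in principle, but it is \emph{not} what the paper does, and your final sentence misidentifies what the accompanying script carries out.

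The paper never enumerates or encloses the Neumann eigenvalues $j'_{m,k}$ on $[3,14]$. Instead it proves Theorem~\ref{thm:polyadiskN2} via Theorem~\ref{thm:count4}: it verifies the stronger inequality $\Pt_2^\Neu(\lambda)>\lambda^2/4$ for the \emph{lattice} counting function, and then invokes Theorem~\ref{thm:sher} to pass to $\N^\Neu_\disk$. The computer-assisted part (\S\ref{sec:Neumann2computer}) evaluates, at a finite sequence of rational $\lambda$'s, a verified rational lower bound $\underline{\Pt}^\Neu_2(\lambda)$ built only from rational approximations to $\sqrt{\cdot}$ and $\arccos$ inside the elementary function $G_\lambda$; Bessel functions never appear. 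Each successful step at $\lambda_0$ with margin $\underline{e}(\lambda_0)>0$ certifies the inequality on $[\lambda_0,\sqrt{\lambda_0^2+4\underline{e}(\lambda_0)}\,)$ by monotonicity of $\Pt_2^\Neu$, and the whole interval $[3,14]$ is covered in fourteen steps (Table~\ref{table:2}).

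Your route---certified enclosures of all $j'_{m,k}\le 14$, sorting, and checking $c_j\ge b_{m,k}^2/4$ at each right endpoint---would also yield a valid rigorous proof, and it bypasses Theorem~\ref{thm:sher} entirely. The cost is that you must implement verified evaluation of Bessel derivatives (or the phase function $\phi_\nu$) with enough precision to separate roughly fifty zeros and to make every slack $c_j-b_{m,k}^2/4$ demonstrably nonnegative; this is considerably heavier machinery than the integer-part-of-arccos arithmetic the paper uses. The paper's approach trades directness for simplicity: by passing through the lattice count it reduces the computer-assisted step to elementary rational arithmetic that could in principle be checked by hand.
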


The proof of Theorem \ref{thm:polyadiskN2} is \emph{rigorous} but \emph{computer-assisted}. 
More specifically, it is based on a realisation of an algorithm which satisfies two fundamental 
principles.
\begin{description}
\item[Principle 1.] The algorithm should complete in a finite number of steps.
\item[Principle 2.] The algorithm should operate only with integer or rational numbers, thus avoiding \emph{any} use of floating-point arithmetic and \emph{any} rounding errors.
\end{description}

The combination of Lemma \ref{lem:Lambda0} and Theorems \ref{thm:polyadiskN} and \ref{thm:polyadiskN2} ensures that the Neumann P\'olya conjecture for the disk is valid  for all $\lambda>0$, that is we have
\begin{cor}
$\N^\Neu_{\disk}(\lambda)>\frac{\lambda^2}{4}$ for all $\lambda>0$.
\end{cor}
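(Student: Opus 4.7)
The plan is essentially bookkeeping: I would combine the three results that immediately precede the statement to cover every $\lambda>0$, and then upgrade the resulting non-strict bound to the strict one by exploiting that the counting function is integer-valued.

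First, I would check that the three intervals on which the Neumann P\'olya inequality $\N^\Neu_\disk(\lambda)\ge\lambda^2/4$ has been established actually overlap and cover $[0,\infty)$. Lemma \ref{lem:Lambda0} gives validity on $[0,\Lambda_0]$ with $\Lambda_0=2\sqrt{3}$, Theorem \ref{thm:polyadiskN2} gives it on $[3,14]$, and Theorem \ref{thm:polyadiskN} gives it on $[\Lambda_1,\infty)$ with $\Lambda_1=6\pi/(3\pi-8)$. Since $12>9$ we have $\Lambda_0>3$, so $[0,\Lambda_0]\cup[3,14]\supseteq[0,14]$. Likewise, the inequality $\Lambda_1<14$ reduces to $36\pi>112$, which holds because $\pi>28/9$; hence $[3,14]\cup[\Lambda_1,\infty)\supseteq[3,\infty)$. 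Putting these together gives $\N^\Neu_\disk(\lambda)\ge\lambda^2/4$ for all $\lambda\ge0$.

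Next I would upgrade the bound to strict inequality on $(0,\infty)$. The only way equality $\N^\Neu_\disk(\lambda_0)=\lambda_0^2/4$ could occur at some $\lambda_0>0$ is if $\lambda_0^2/4$ is a nonnegative integer, say $k$, so that $\lambda_0=2\sqrt{k}$. With the convention that $\N^\Neu_\disk$ is defined by non-strict inequality $\mu_n\le\lambda^2$, the counting function is non-decreasing and right-continuous; thus there exists $\epsilon>0$ such that $\N^\Neu_\disk(\lambda)=k$ for all $\lambda\in[\lambda_0,\lambda_0+\epsilon)$. But then for any such $\lambda>\lambda_0$ we would have $k=\N^\Neu_\disk(\lambda)\ge\lambda^2/4>\lambda_0^2/4=k$, a contradiction. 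Hence $\N^\Neu_\disk(\lambda_0)>\lambda_0^2/4$, and the corollary follows.

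There is no real obstacle here since all three ingredients are already in hand; the only nontrivial part is the verification that $\Lambda_0>3$ and $\Lambda_1<14$ so that the covering is genuine, which the authors have already noted in the paragraph following Theorem \ref{thm:polyadiskN}.
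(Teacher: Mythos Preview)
Your proposal is correct and follows essentially the same approach as the paper, which simply states that the corollary follows from combining Lemma~\ref{lem:Lambda0} with Theorems~\ref{thm:polyadiskN} and~\ref{thm:polyadiskN2}. Your additional right-continuity argument to upgrade the inequality to a strict one is a nice touch of rigor that the paper glosses over; in the paper's own treatment the strictness already comes for free on $[3,\infty)$ via $\N^\Neu_\disk\ge\Pt_2^\Neu>\lambda^2/4$, and on $(0,2\sqrt{3})$ directly from the explicit bounds in the proof of Lemma~\ref{lem:Lambda0}, but your uniform argument avoids having to inspect each piece separately.
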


\begin{remark} Since P\'{o}lya's conjecture is scale-invariant, its validity for a unit ball immediately implies that it is valid for any ball of the same dimension.
\end{remark}

We additionally have the following generalisation of P\'{o}lya's result for tiling domains: we show that P\'olya's conjecture holds not only for domains which tile Euclidean space, but also for domains which tile another domain for which it is known to be true.
\begin{theorem}\label{thm:tiling}
Let $\Omega\subset\mathbb{R}^d$ be a domain for which either the Dirichlet or the Neumann P\'{o}lya's conjecture holds, and let $\Omega'$ be a domain which tiles $\Omega$. Then the same P\'{o}lya's conjecture also holds for $\Omega'$.
\end{theorem}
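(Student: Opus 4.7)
The plan is to adapt P\'olya's original tiling argument, with the ambient Euclidean space replaced by the domain $\Omega$. In the classical setting, one tiles a large cube by copies of $\Omega'$, applies Dirichlet--Neumann bracketing, and appeals to Weyl's law in the limit. Here the role of Weyl's asymptotic bound is taken by the (non-asymptotic) P\'olya inequality for $\Omega$ itself, so no limit is needed and a single application of bracketing will suffice. The first step is bookkeeping: by assumption $\Omega = \bigsqcup_{j=1}^N \Omega_j$ up to a set of measure zero, where each $\Omega_j$ is an isometric copy of $\Omega'$; because $\Omega$ is bounded the integer $N = |\Omega|_d/|\Omega'|_d$ is finite, and rigid-motion invariance of the spectrum gives $\N^\Dir_{\Omega_j}(\lambda) = \N^\Dir_{\Omega'}(\lambda)$ and $\N^\Neu_{\Omega_j}(\lambda) = \N^\Neu_{\Omega'}(\lambda)$ for every $j$.

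For the Dirichlet case, imposing additional Dirichlet conditions along the internal interfaces $\bigcup_j \partial \Omega_j \cap \Omega$ shrinks the form domain $H^1_0(\Omega)$ to the direct sum $\bigoplus_j H^1_0(\Omega_j)$, so by min-max the counting functions satisfy
\[
\sum_{j=1}^N \N^\Dir_{\Omega_j}(\lambda) \;=\; N\cdot \N^\Dir_{\Omega'}(\lambda) \;\le\; \N^\Dir_\Omega(\lambda).
\]
Combined with the assumed Dirichlet P\'olya inequality $\N^\Dir_\Omega(\lambda)\le \WeylC{d}\,|\Omega|_d\,\lambda^d = N\cdot \WeylC{d}\,|\Omega'|_d\,\lambda^d$, dividing by $N$ yields P\'olya for $\Omega'$; strictness propagates, if present.

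For the Neumann case the inequality reverses: dropping continuity across the internal interfaces enlarges the form domain $H^1(\Omega)$ to $\bigoplus_j H^1(\Omega_j)$, and min-max instead gives
\[
\N^\Neu_\Omega(\lambda) \;\le\; \sum_{j=1}^N \N^\Neu_{\Omega_j}(\lambda) \;=\; N\cdot \N^\Neu_{\Omega'}(\lambda).
\]
Using the assumed Neumann P\'olya lower bound $\N^\Neu_\Omega(\lambda)\ge \WeylC{d}\,|\Omega|_d\,\lambda^d = N\cdot \WeylC{d}\,|\Omega'|_d\,\lambda^d$ and dividing by $N$ gives the conclusion for $\Omega'$.

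The only potential difficulty lies in ensuring that Dirichlet--Neumann bracketing is rigorous under the regularity available for the tiles; however, since the Neumann problem on $\Omega'$ and $\Omega$ is assumed to yield a discrete spectrum (as in \eqref{eq:neuL}), Lipschitz regularity is already implicit, and the form-domain decompositions above are entirely standard. Accordingly, I do not anticipate any genuinely hard step; the argument is essentially a one-line application of bracketing once the set-up is in place.
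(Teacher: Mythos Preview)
Your argument is correct and is essentially identical to the paper's: both use a single application of Dirichlet--Neumann bracketing on the decomposition of $\Omega$ into $N$ (the paper's $\ell$) congruent copies of $\Omega'$, then divide the assumed P\'olya inequality for $\Omega$ by $N$. The paper's version is terser, but the content is the same.
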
  
\begin{proof}
Assume that $\Omega$ can be tiled by $\ell\ge 2$ congruent copies of $\Omega'$, so that $|\Omega|_d=\ell|\Omega'|_d$.  We have, by bracketing and since the eigenvalues of all the congruent copies coincide with those of $\Omega'$,
\[
\ell  \N^\Dir_{\Omega'}(\lambda)\le \N^\Dir_\Omega(\lambda)< \N^\Neu_\Omega(\lambda)\le \ell  \N^\Neu_{\Omega'}(\lambda).
\]
Assuming now \eqref{eq:polya} for all $\lambda\ge 0$, we get
\[
\ell  \N^\Dir_{\Omega'}(\lambda)\le\WeylC{d} |\Omega|_d \lambda^d=\WeylC{d} \ell |\Omega'|_d \lambda^d\le \ell  \N^\Neu_{\Omega'}(\lambda),
\]
and the result follows by cancelling $\ell$.
\end{proof}

\begin{remark} If the inequalities in P\'olya's conjecture \eqref{eq:polya} for $\Omega$ are strict, they are also strict for $\Omega'$. 
\end{remark}

Theorem \ref{thm:tiling} immediately implies the following  
\begin{cor}\label{cor:polyacone}
Let $\widetilde{\Omega}\subset\mathbb{S}^{d-1}$ be a spherical domain which tiles $\mathbb{S}^{d-1}$.  Then the Dirichlet P\'{o}lya's conjecture holds for the spherical cone in $\R^d$ with the base $\widetilde{\Omega}$ and the vertex at the origin.
\end{cor}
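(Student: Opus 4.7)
The plan is to derive Corollary \ref{cor:polyacone} as an immediate application of Theorem \ref{thm:tiling} combined with the Dirichlet P\'olya's conjecture for the ball just established in Theorem \ref{thm:polyaballD}. The key geometric observation is that the spherical cone over $\widetilde{\Omega}$ with vertex at the origin, viewed as the bounded set
\[
C := \{r\omega : r \in (0,1),\ \omega \in \widetilde{\Omega}\} \subset \R^d,
\]
tiles the unit ball $\ball{d}$ whenever $\widetilde{\Omega}$ tiles $\mathbb{S}^{d-1}$.

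First, I would set up the tiling. Suppose that $\ell$ congruent copies of $\widetilde{\Omega}$, obtained by applying isometries $g_1,\dots,g_\ell$ of $\mathbb{S}^{d-1}$, cover $\mathbb{S}^{d-1}$ up to a set of $(d{-}1)$-dimensional measure zero. Every isometry of $\mathbb{S}^{d-1}$ extends to an orthogonal transformation of $\R^d$, which preserves the radial coordinate. Hence the images $g_j(C)$ are congruent copies of $C$ whose union covers $\ball{d}$ up to a set of $d$-dimensional measure zero, showing that $C$ tiles $\ball{d}$.

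Next, by Theorem \ref{thm:polyaballD}, the Dirichlet P\'olya's conjecture, in the strict form $\N^\Dir_{\ball{d}}(\lambda) < W_d(\lambda)$ for all $\lambda > 0$, holds for $\ball{d}$. Applying Theorem \ref{thm:tiling} (together with the subsequent remark on preservation of strict inequalities) to the pair $\Omega = \ball{d}$ and $\Omega' = C$ then yields the Dirichlet P\'olya's conjecture for the spherical cone $C$, which is precisely the claim of the corollary.

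There is essentially no obstacle here: the only thing that needs a moment's thought is the extension of the spherical isometries to $\R^d$ and the verification that they indeed send $C$ to congruent pieces of $\ball{d}$, but this is immediate from the cone structure and the orthogonality of the extensions. Everything else is a direct quotation of Theorems \ref{thm:polyaballD} and \ref{thm:tiling}.
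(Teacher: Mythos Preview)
Your proposal is correct and matches the paper's approach exactly: the paper states that the corollary follows immediately from Theorem~\ref{thm:tiling} (together with Theorem~\ref{thm:polyaballD} for the ball), and your argument just spells out the one geometric point the paper leaves implicit, namely that a tiling of $\mathbb{S}^{d-1}$ by copies of $\widetilde{\Omega}$ lifts, via orthogonal extensions, to a tiling of $\ball{d}$ by congruent copies of the cone.
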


In the planar case we get
\begin{cor}\label{cor:polyasector}
P\'{o}lya's conjecture holds for any circular sector $S_\alpha$ with an aperture $\alpha=\frac{2\pi}{\ell}$, where $\ell\in\{2,3,\dots\}$.
\end{cor}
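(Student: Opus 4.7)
The plan is to derive Corollary \ref{cor:polyasector} as an immediate application of Theorem \ref{thm:tiling}, using the disk as the ``host'' domain for which Pólya's conjecture is already known (in both the Dirichlet and Neumann cases) from the earlier results in the excerpt.

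First, I would record that for $\Omega = \disk = \ball{2}$ the Dirichlet Pólya's conjecture is the $d=2$ case of Theorem \ref{thm:polyaballD}, while the Neumann Pólya's conjecture for the disk is exactly the content of the Corollary stated just after Theorem \ref{thm:polyadiskN2}, obtained by combining Lemma \ref{lem:Lambda0} with Theorems \ref{thm:polyadiskN} and \ref{thm:polyadiskN2}. Thus both inequalities in \eqref{eq:polya} hold on $\disk$ for every $\lambda \ge 0$.

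Next, I would verify the tiling hypothesis: for $\alpha = 2\pi/\ell$ with integer $\ell \ge 2$, the $\ell$ sectors obtained by rotating $S_\alpha$ about the origin through the angles $0, \alpha, 2\alpha, \dots, (\ell-1)\alpha$ are pairwise disjoint (up to their common radial boundary segments, which form a set of two-dimensional Lebesgue measure zero) and their union is the open disk $\disk$ minus a measure-zero set. Hence $S_\alpha$ tiles $\disk$ in the sense required by Theorem \ref{thm:tiling}.

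Finally, applying Theorem \ref{thm:tiling} with $\Omega = \disk$ and $\Omega' = S_\alpha$ yields both the Dirichlet and Neumann Pólya inequalities for $S_\alpha$, which is precisely the claim of Corollary \ref{cor:polyasector}. There is no real obstacle here: the only point worth double-checking is that Theorem \ref{thm:tiling} is stated for tilings of a domain for which either Dirichlet or Neumann Pólya holds, and we are in a position to invoke it for both boundary conditions separately since the disk satisfies both. No additional computation or estimate is needed beyond citing these results.
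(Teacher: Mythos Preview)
Your proposal is correct and follows exactly the paper's intended argument: Corollary~\ref{cor:polyasector} is stated immediately after Theorem~\ref{thm:tiling} (and Corollary~\ref{cor:polyacone}) as its direct planar consequence, using that the sector $S_{2\pi/\ell}$ tiles the disk and that both the Dirichlet and Neumann P\'olya conjectures are already established for $\disk$. No additional ingredients are needed, and your verification of the tiling hypothesis and the separate invocation of Theorem~\ref{thm:tiling} for each boundary condition is precisely what the paper leaves implicit.
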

We refer also to  \cite{Fre22} for an alternative proof of Corollary \ref{cor:polyasector} for sufficiently large (but unspecified) $\ell$.

We can in fact extend the result of Corollary \ref{cor:polyasector} to arbitrary sectors.
\begin{theorem}\label{thm:polyasector}
P\'{o}lya's conjecture holds for any circular sector $S_\alpha$ with an aperture $\alpha\in(0,2\pi]$, that is
\[
\N^\Dir_{S_\alpha}(\lambda) < \frac{\alpha\lambda^2}{8\pi} < \N^\Neu_{S_\alpha}(\lambda)
\]
for all $\lambda>0$.
\end{theorem}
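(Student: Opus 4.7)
In polar coordinates $(r,\theta)\in(0,1)\times(0,\alpha)$, separation of variables on $S_\alpha$ gives Dirichlet eigenvalues $\{j_{k\pi/\alpha,n}^2: k,n\ge1\}$ and Neumann eigenvalues $\{(j'_{k\pi/\alpha,n})^2: k\ge 0,\ n\ge 1\}$, with the convention $j'_{0,1}:=0$ for the constant Neumann mode. Using strict monotonicity of $\nu\mapsto j_{\nu,n}$ and $\nu\mapsto j'_{\nu,n}$ on $[0,\infty)$, I would introduce the ``vertical inverses'' $\nu_n^\Dir(\lambda)$ and $\nu_n^\Neu(\lambda)$ solving $j_{\nu,n}=\lambda$ and $j'_{\nu,n}=\lambda$ respectively (set to $0$ when no positive solution exists), which recasts
\[
\N^\Dir_{S_\alpha}(\lambda)=\sum_{n\ge 1}\left\lfloor\frac{\alpha\,\nu_n^\Dir(\lambda)}{\pi}\right\rfloor, \qquad
\N^\Neu_{S_\alpha}(\lambda)=K_0(\lambda)+\sum_{n\ge 1}\left\lfloor\frac{\alpha\,\nu_n^\Neu(\lambda)}{\pi}\right\rfloor,
\]
where $K_0(\lambda):=\#\{n\ge 1:j'_{0,n}\le\lambda\}$ counts the $k=0$ Neumann contributions and equals precisely the number of $n$ with $\nu_n^\Neu(\lambda)>0$.

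Applying $\lfloor x\rfloor\le x$ term-by-term in the Dirichlet sum and $\lfloor x\rfloor>x-1$ in the Neumann sum (whose $-1$'s are absorbed exactly by $K_0(\lambda)$) reduces Theorem~\ref{thm:polyasector} to the single one-variable inequality
\[
\Sigma^\Dir(\lambda):=\sum_{n\ge 1}\nu_n^\Dir(\lambda)\le\frac{\lambda^2}{8}\le\sum_{n\ge 1}\nu_n^\Neu(\lambda)=:\Sigma^\Neu(\lambda),\qquad \lambda>0.
\]
Strict inequality in the theorem then follows because at any given $\alpha$ at most finitely many of the quantities $\alpha\nu_n^{\Dir/\Neu}(\lambda)/\pi$ can be integers, so at least one floor estimate is strict. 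To prove the one-variable inequality I would use the layer-cake identities
\[
\Sigma^\Dir(\lambda)=\int_0^\infty\#\{n:j_{\nu,n}\le\lambda\}\,d\nu,\qquad
\Sigma^\Neu(\lambda)=\int_0^\infty\#\{n:j'_{\nu,n}\le\lambda\}\,d\nu,
\]
and compare each integrand (a non-increasing step function in $\nu$) against its values at integer $\nu=m$; those integer values are precisely the angular-mode multiplicities that assemble into $\N^\Dir_\disk(\lambda)$ and $\N^\Neu_\disk(\lambda)$. The comparison thus converts both halves of the inequality into the already-established disk P\'olya bounds $\N^\Dir_\disk(\lambda)<\lambda^2/4<\N^\Neu_\disk(\lambda)$ (from Theorem~\ref{thm:polyaballD} with $d=2$, together with Lemma~\ref{lem:Lambda0} and Theorems~\ref{thm:polyadiskN}--\ref{thm:polyadiskN2}).

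\textbf{Main obstacle.} A crude integral-to-sum comparison for a non-increasing step function yields only $\Sigma^\Dir(\lambda)\le\lambda^2/8+O(\lambda)$ (and symmetrically $\Sigma^\Neu(\lambda)\ge\lambda^2/8-O(\lambda)$), the error term coming from the boundary value $\#\{n:j_{0,n}\le\lambda\}\sim\lambda/\pi$ at $\nu=0$. Sharpening this comparison to the clean constant $\lambda^2/8$ is the principal analytic difficulty; I expect it to be handled by a midpoint/trapezoidal refinement of the Riemann-sum bound combined with the interlacing $j'_{\nu,n}<j_{\nu,n}<j'_{\nu,n+1}$, which pairs each Dirichlet boundary contribution at integer $\nu=m$ with an adjacent Neumann one and cancels the offending term exactly. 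For the narrow low-$\lambda$ range where the asymptotic machinery is not yet sharp, a direct verification using the first few explicit Bessel zeros (or a short rigorous computer-assisted check in the spirit of Theorem~\ref{thm:polyadiskN2}) should close the remaining gap.
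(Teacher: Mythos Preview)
Your reduction is clean and correct: writing $\N^\Dir_{S_\alpha}(\lambda)=\sum_n\lfloor\alpha\nu_n^\Dir(\lambda)/\pi\rfloor$ and $\N^\Neu_{S_\alpha}(\lambda)=K_0(\lambda)+\sum_n\lfloor\alpha\nu_n^\Neu(\lambda)/\pi\rfloor$, then stripping the floors via $\lfloor x\rfloor\le x$ and $\lfloor x\rfloor\ge x-1$, does reduce both halves of the theorem to the $\alpha$-free inequalities $\Sigma^\Dir(\lambda)<\lambda^2/8<\Sigma^\Neu(\lambda)$. This is a genuinely different route from the paper's. The paper instead applies the lattice-count Theorems~\ref{thm:countDdisk} and~\ref{thm:countNdisk} directly to the rescaled function $g_{\lambda,\alpha}(t)=G_\lambda(\pi t/\alpha)$ on $[0,\alpha\lambda/\pi]$, which is legitimate only when $\alpha\ge\pi$ (so that $|g'_{\lambda,\alpha}|\le\pi/(2\alpha)\le 1/2$); the range $\alpha<\pi$ is then handled separately by the tiling Theorem~\ref{thm:tiling}, since $S_\alpha$ tiles $S_{\ell\alpha}$ with $\ell=\lfloor 2\pi/\alpha\rfloor$ and $\ell\alpha\in[\pi,2\pi]$. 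Your approach, were it to go through, would treat all $\alpha$ uniformly and would even extend to $\alpha>2\pi$; that is an attractive feature.

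But the gap is exactly where you place it, and it is real rather than a technicality. You do not prove $\Sigma^\Dir(\lambda)\le\lambda^2/8$ or $\Sigma^\Neu(\lambda)\ge\lambda^2/8$; you only observe that the naive comparison against the disk count leaves an $O(\lambda)$ error and then \emph{expect} that a ``midpoint/trapezoidal refinement combined with interlacing'' will remove it. Neither ingredient does the job. The integrand $\phi(\nu)=\#\{k:j_{\nu,k}\le\lambda\}$ is a non-increasing \emph{step} function, for which neither the midpoint nor the trapezoidal rule carries a definite sign; the convexity that drives Theorems~\ref{thm:countDdisk}--\ref{thm:countNdisk} lives in $G_\lambda$, not in $\phi$. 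The interlacing $j'_{\nu,n}<j_{\nu,n}<j'_{\nu,n+1}$ yields only $\nu_{n+1}^\Neu<\nu_n^\Dir<\nu_n^\Neu$, hence $\Sigma^\Dir<\Sigma^\Neu$, which says nothing about where $\lambda^2/8$ lies between them. And reducing to the disk bounds $\N^\Dir_\disk(\lambda)<\lambda^2/4<\N^\Neu_\disk(\lambda)$ cannot close the gap either: those are statements about $\phi(0)+2\sum_{m\ge1}\phi(m)$, and the discrepancy between that sum and $2\int_0^\infty\phi$ is precisely the boundary term $\phi(0)\sim\lambda/\pi$ you are trying to kill. So the central analytic step is genuinely missing, and nothing in your sketch supplies it.
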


\begin{remark} The result of Theorem \ref{thm:polyasector} in the case $\alpha=2\pi$ (the disk with the radial slit) follows immediately from Theorems \ref{thm:polyaballD} ($d=2$) and \ref{thm:polyadiskN} by Dirichlet--Neumann bracketing.
\end{remark}

\subsection*{Plan of the paper}  In the next section we describe two lattice counting problems \eqref{eq:PtD} and \eqref{eq:PtN}, variants of which were originally introduced by N. Kuznetsov and B. Fedosov in \cite{kufe},  and which are  closely linked to the Dirichlet and Neumann eigenvalue counting problems in the ball.  The key novel tool  is Theorem \ref{thm:sher},  originally obtained in part in \cite{Sher}, which gives a {\em uniform}  bound between the eigenvalue and the lattice counting functions, as opposed to asymptotic relations that were previously known. We provide  an independent  proof of this result in \S\ref{sec:thmpf}. In \S\ref{sec:analytic} we state the results on the lattice counting functions which are sufficient for proving P\'olya's conjecture for balls. The bulk of the paper, \S\S\ref{sec:proofD2}--\ref{sec:Neumann2computer}, is devoted to the proofs of these results. Theorem \ref{thm:polyasector} is proved in \S\ref{sec:sectors}. 

\subsection*{Acknowledgements} The authors would like to thank L. Friedlander,  R. Laugesen, Z. Rudnick, and I. Wigman for useful discussions. We are also very grateful to the anonymous referee  for numerous helpful suggestions. Research of NF was supported by the grant No. 22-11-00092 of the Russian Science Foundation. Research of ML was partially supported by the EPSRC grants EP/W006898/1 and EP/V051881/1, and by the University of Reading RETF Open Fund.  Research of IP was partially supported by NSERC and FRQNT. Research of DS was partially supported by an FSRG from DePaul University.

\section{Dirichlet and Neumann eigenvalues of the ball and lattice counting problems}\label{sec:evdisk}
Throughout this paper, with $\nu\ge 0$, and $k\in\mathbb{N}$, let $J_\nu(z)$ be the Bessel functions of order $\nu$,  let $j_{\nu,k}$ be the $k$th positive zero of $J_\nu$, and let $j'_{\nu, k}$ be  the $k$th positive zero of its derivative $J'_\nu$, with the exception of $J'_0$ for which $j'_{0,1}=0$. 

It is well known that the eigenvalues of the Dirichlet Laplacian in the unit ball are given by the squares of the zeros of the cylindrical Bessel functions. Namely, considering the Dirichlet Laplacian in $\ball{d}$, we have the simple eigenvalues 
\[
\lambda_{d,0,k}=\left(j_{d/2-1,k}\right)^2,\qquad k\in\mathbb{N},
\]
that is of multiplicity
\[
\kappa_{d, 0}:=1,
\]
and the eigenvalues 
\[
\lambda_{d,m,k}=\left(j_{m+d/2-1,k}\right)^2,\qquad m, k\in\mathbb{N},
\]
of multiplicity 
\[
\kappa_{d, m}:=\binom{m+d-1}{d-1}-\binom{m+d-3}{d-1}. 
\]

\begin{remark} We note that the numbers $\kappa_{d, m}$, $d\ge 2$, $m\in \mathbb{N}_0=\mathbb{N}\cup\{0\}$, coincide with the multiplicity of the eigenvalue $m(m+d-2)$ of the Laplace--Beltrami operator on the unit sphere $\mathbb{S}^{d-1}$, or alternatively with the dimension of the space of homogeneous harmonic polynomials  of degree $m$ in $\mathbb{R}^d$.  In the planar case we have
\[
\kappa_{2, m}=2\qquad\text{for }m\in\mathbb{N}.
\]
\end{remark}

We therefore have
\begin{equation}\label{eq:NDball}
\N^\Dir_{\ball{d}}(\lambda)=\sum_{m=0}^\infty \kappa_{d,m}\#\left\{k\in\mathbb{N}: j_{m+d/2-1,k}\le \lambda\right\}.
\end{equation}

\begin{remark}\label{rem:NDballfinite} The sum in \eqref{eq:NDball} is in fact finite: we have\footnote{Here and further on, $\entire{x}=\max\{k\in\mathbb{Z}: k\le x\}$ denotes the integer part of $x\in\mathbb{R}$, and $\ceiling{x}=\min\{k\in\mathbb{Z}: k\ge x\}$ denotes its ceiling.}
\begin{equation}\label{eq:NDballfinite}
\N^\Dir_{\ball{d}}(\lambda)=\sum_{m=0}^{\entire{\lambda-d/2+1}}\kappa_{d,m}\#\left\{k\in\mathbb{N}: j_{m+d/2-1,k}\le \lambda\right\},
\end{equation}
This is due to the fact that $j_{\nu,1}>\nu$ \cite[Eq. 10.21.3]{dlmf}. Note that in \eqref{eq:NDballfinite} and further on, any sum in which the lower limit exceeds the upper limit is assumed to be zero, which immediately gives $\N^\Dir_{\ball{d}}(\lambda)=0$ for $\lambda<\frac{d}{2}-1$.
\end{remark}

In the planar case the expression \eqref{eq:NDball} simplifies to 
\[
\N^\Dir_\disk(\lambda)=\#\left\{k\in\mathbb{N}: j_{0,k}\le \lambda\right\}+2\sum_{m=1}^{\entire{\lambda-d/2+1}}\#\left\{(m,k)\in\mathbb{N}^2: j_{m,k}\le \lambda\right\}.
\]

Similarly, the eigenvalues of the Neumann Laplacian in the unit disk $\disk$ are given by the squares of the zeros of the derivatives of  Bessel functions. We have the simple eigenvalues 
\[
\mu_{0,k}=\left(j'_{0,k}\right)^2,\qquad k\in\mathbb{N},
\]
and the double eigenvalues  
\[
\mu_{m,k}=\left(j'_{m,k}\right)^2,\qquad m, k\in\mathbb{N}.
\]
We therefore have
\begin{equation}\label{eq:NNdisk}
\N^\Neu_\disk(\lambda)=\#\left\{k\in\mathbb{N}: j'_{0,k}\le \lambda\right\}+2\sum_{m=1}^{\entire{\lambda-d/2+1}}\#\left\{(m,k)\in\mathbb{N}^2: j'_{m,k}\le \lambda\right\},
\end{equation} 
where the sum is again finite since $j'_{\nu,1}\ge \nu$  \cite[Eq. 10.21.3]{dlmf}.

For illustrative purposes only, we show the graphs of the Dirichlet and Neumann eigenvalue counting functions for the disk in Figure \ref{fig:NDplot}.

\begin{figure}[htpb]
\centering
\includegraphics{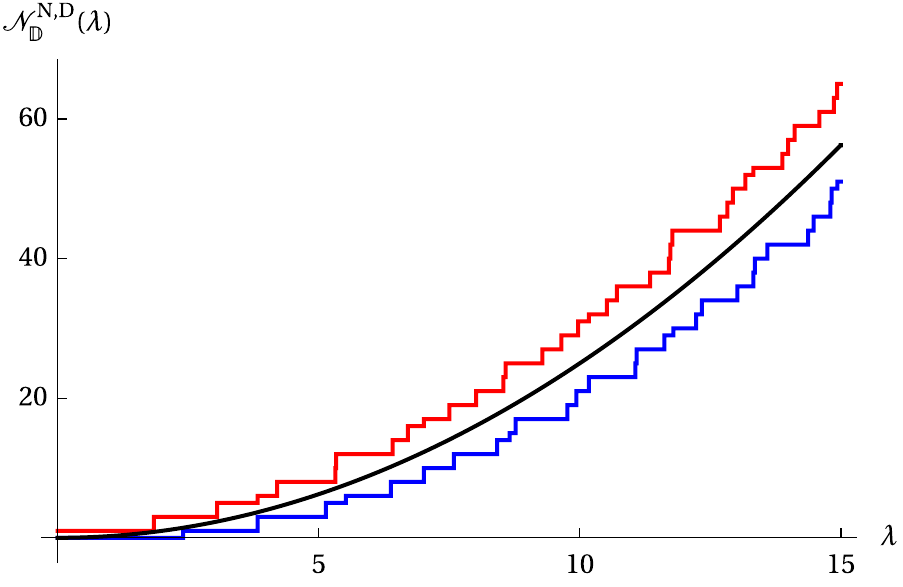}
\caption{The Dirichlet eigenvalue counting function $\N^\Dir_{\disk}(\lambda)$ (blue), the Neumann eigenvalue counting function $\N^\Neu_{\disk}(\lambda)$ (red), and the leading Weyl's term $W_d(\lambda)=\frac{\lambda^2}{4}$ (black) in dimension $d=2$. The plot is produced using the floating-point evaluation of zeros of the Bessel functions and their derivatives. If we were to assume (contrary to the philosophy of this paper) the validity of floating-point arithmetic, this plot would have presented a \emph{numerically assisted} (as opposed to \emph{computer-assisted)} ``proof'' of P\'{o}lya's conjecture for the disk for $\lambda\lessapprox 15$.\label{fig:NDplot}}
\end{figure}

We will be comparing the  counting functions $\N^\Dir_{\ball{d}}(\lambda)$ and $\N^\Neu_{\disk}(\lambda)$ with some weighted lattice counting functions. 
Let 
\[
h(x):=\frac{1}{\pi}\left(\sqrt{1-x^2}-x\arccos x\right), \qquad x\in[0,1],
\]
and let $\Ph$ be a planar region under the graph of $h(x)$,
\[
\Ph:=\left\{(x, y): x\in[0,1], y\in[0, h(x)]\right\}.
\]

Let, for $\lambda>0$, 
\begin{equation}\label{eq:Glambda}
G_\lambda(z):=\lambda h\left(\frac{z}{\lambda}\right)=\frac{1}{\pi}\left(\sqrt{\lambda^2-z^2}-z\arccos\frac{z}{\lambda}\right),\qquad z\in[0,\lambda],
\end{equation}
and let $\Ph_\lambda$ be a dilation of $\Ph$ with coefficient $\lambda$ with respect to the origin, 
\begin{equation}\label{eq:Phlambda}
\Ph_\lambda= \left\{(z,y): 0\le z\le\lambda, 0\le y\le G_\lambda(z)\right\},
\end{equation}
that is, the region under the graph of $G_\lambda(z)$.

Let 
\[
Q^\Dir_d(\lambda):=\left\{(m,k)+\left(\frac{d}{2}-1, -\frac{1}{4}\right)\in \Ph_\lambda: (m,k)\in \mathbb{N}_0\times \mathbb{N}\right\} 
\]
and 
\[
Q^\Neu_2(\lambda):=\left\{(m,k)+\left(0, -\frac{3}{4}\right)\in \Ph_\lambda: (m,k)\in \mathbb{N}_0\times \mathbb{N}\right\} 
\]
be the sets of shifted integer lattice points which lie in $\Ph_\lambda$, see Figure \ref{fig:2}. The definitions of the two sets for $d=2$ differ by a vertical shift. The reason for choosing this particular notation will become evident later. 

We now introduce the \emph{weighted lattice point counting functions}
\begin{equation}\label{eq:PtD}
\Pt^\Dir_d(\lambda):=\sum_{\left(m+\frac{d}{2}-1,k-\frac{1}{4}\right)\in Q^\Dir_d(\lambda)}\kappa_{d,m}
\end{equation}
and
\begin{equation}\label{eq:PtN}
\Pt^\Neu_2(\lambda):=\sum_{\left(m,k-\frac{3}{4}\right)\in Q^\Neu_2(\lambda)}\kappa_{2,m}.
\end{equation}
It is immediately seen from the definitions \eqref{eq:Phlambda}--\eqref{eq:PtN} that with $\aleph\in\{\Dir,\Neu\}$ we have
\begin{equation}\label{eq:PtDN}
\Pt^\aleph_d(\lambda)=\sum_{m=0}^{\entire{\lambda-d/2+1}}\kappa_{d,m}\entire{G_\lambda\left(m+\frac{d}{2}-1\right)+s^\aleph},
\end{equation}
where
\begin{equation}\label{eq:saleph}
s^\Dir:=\frac{1}{4}, \qquad s^\Neu:=\frac{3}{4}.
\end{equation}
 
\begin{figure}[htpb]
\centering
\includegraphics{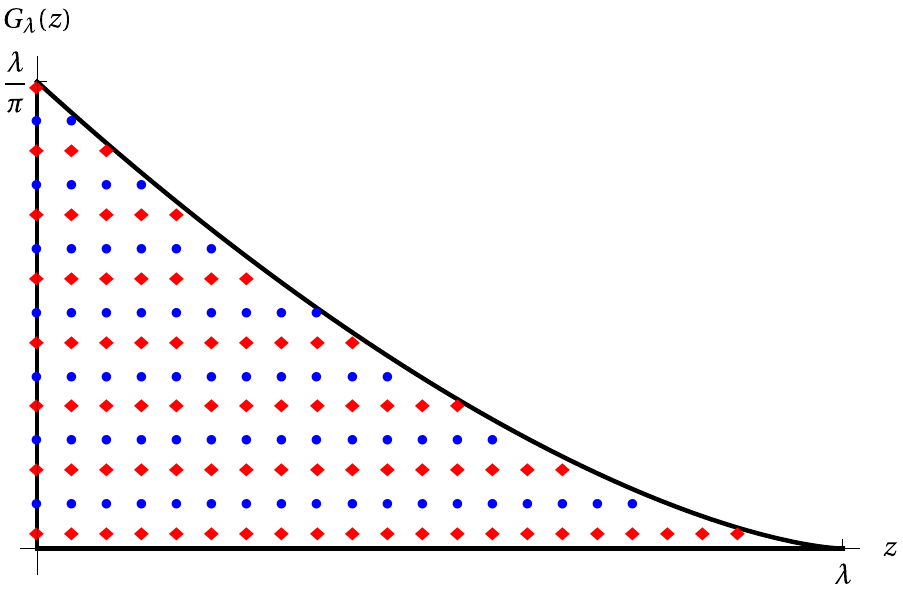}
\caption{The region $\Ph_\lambda$, and the sets of shifted lattice points $Q^\Dir_d(\lambda)$ (blue disks) and  $Q^\Neu_d(\lambda)$ (red diamonds), shown here for $d=2$ and $\lambda=23$.\label{fig:2}}
\end{figure}

It is well known that as $\lambda\to+\infty$, the asymptotics  of the lattice point counting function  $\Pt_d^\Dir(\lambda)$ is intricately linked to the asymptotics of the eigenvalue counting function $\N^\Dir_{\ball{d}}(\lambda)$. This was first shown in the planar case in \cite{kufe} and later re-discovered in \cite{cdv}, see also \cite{gravel}. Namely, in some appropriate sense,
\[
\N^\Dir_{\ball{d}}(\lambda)\sim \Pt^\Dir_d(\lambda)\qquad\text{as }\lambda\to+\infty.
\]
This observation, together with asymptotic bounds on the difference between the two functions, has been used to great effect to estimate the remainder in Weyl's law for the unit ball. In particular, for the Dirichlet problem in the disk the two-term Weyl asymptotics \eqref{eq:twoterm2d}  holds  with an improved remainder estimate 
\[
O\left(\lambda^{131/208}(\log\lambda)^{18627/8320}\right),
\] 
see \cite{GMWW} (the remainder estimate  $O\left(\lambda^{2/3}\right)$ was already obtained in \cite{kufe}, \cite{cdv}). Similar improved remainder estimates are also known in the Dirichlet case for higher-dimensional balls \cite{Guo21} and in the planar Neumann case \cite{GWW}. 

As has been recently found in \cite{Sher} in the Dirichlet case, there is a further simple \emph{non-asymptotic} relation between the lattice point and the eigenvalue counting functions, which lies at the cornerstone of our proofs of Theorems  \ref{thm:polyaballD} and \ref{thm:polyadiskN}. 

\begin{theorem}\label{thm:sher} For any $d\ge 2$ and any $\lambda\ge 0$, we have
\[
\N^\Dir_{\ball{d}}(\lambda)\le \Pt_d^\Dir(\lambda).
\]
We also have, for any $\lambda\ge 0$,
\[
\Pt_2^\Neu(\lambda)\le \N^\Neu_{\disk}(\lambda).
\]
\end{theorem}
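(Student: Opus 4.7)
The plan is to reduce both inequalities of Theorem \ref{thm:sher}, termwise in $m$, to sharp one-sided bounds on the Bessel phase function $\theta_\nu(\lambda)$, defined by $J_\nu(\lambda) = M_\nu(\lambda)\cos\theta_\nu(\lambda)$ and $Y_\nu(\lambda) = M_\nu(\lambda)\sin\theta_\nu(\lambda)$ with $M_\nu^2 = J_\nu^2 + Y_\nu^2$. Normalising $\theta_\nu$ to be continuous and strictly increasing on $(0,\infty)$ with $\theta_\nu(j_{\nu,k}) = (k-1/2)\pi$, one obtains the exact counting identity $\#\{k : j_{\nu,k} \le \lambda\} = \entire{\theta_\nu(\lambda)/\pi + 1/2}$. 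Comparing with \eqref{eq:PtDN}, the Dirichlet half of Theorem \ref{thm:sher} reduces to the uniform pointwise bound $\theta_\nu(\lambda) \le \pi G_\lambda(\nu) - \pi/4$ for all $\lambda \ge \nu \ge 0$; summing the resulting termwise estimate against $\kappa_{d,m}$ with $\nu = m + d/2 - 1$ for $m = 0, 1, \ldots, \entire{\lambda - d/2 + 1}$ then assembles the global inequality.

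To establish this pointwise phase bound, differentiate $\tan\theta_\nu = Y_\nu/J_\nu$ and invoke the Bessel Wronskian $J_\nu Y'_\nu - J'_\nu Y_\nu = 2/(\pi\lambda)$ to obtain the ODE $\theta'_\nu(\lambda) = 2/(\pi\lambda M_\nu(\lambda)^2)$. The key analytic input is the classical Watson-type inequality $M_\nu(\lambda)^2 \le 2/(\pi\sqrt{\lambda^2 - \nu^2})$ valid for $\lambda > \nu \ge 0$, which converts the ODE into the lower derivative bound $\theta'_\nu(\lambda) \ge \sqrt{\lambda^2-\nu^2}/\lambda$. Since $\sqrt{\lambda^2-\nu^2}/\lambda$ is precisely $\tfrac{d}{d\lambda}(\pi G_\lambda(\nu))$, the function $f(\lambda) := \pi G_\lambda(\nu) - \pi/4 - \theta_\nu(\lambda)$ is monotonically decreasing for $\lambda \ge \nu$; the standard Bessel expansions $\theta_\nu(\lambda) = \lambda - \nu\pi/2 - \pi/4 + (4\nu^2-1)/(8\lambda) + O(\lambda^{-3})$ together with the analogous expansion of $G_\lambda(\nu)$ yield $f(\lambda) \to 0$ as $\lambda \to \infty$, so $f \ge 0$ throughout, which is exactly the target bound.

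For the Neumann inequality $\Pt_2^\Neu(\lambda) \le \N^\Neu_\disk(\lambda)$, the analogous reduction calls for $\#\{k : j'_{\nu,k}\le\lambda\} \ge \entire{G_\lambda(\nu) + 3/4}$ at each integer $\nu \ge 0$. Writing $J'_\nu = M'_\nu\cos\theta_\nu - M_\nu\theta'_\nu\sin\theta_\nu = -D_\nu(\lambda)\sin(\theta_\nu(\lambda) - \beta_\nu(\lambda))$, where $\tan\beta_\nu = M'_\nu/(M_\nu\theta'_\nu)$ and $D_\nu > 0$, the zeros of $J'_\nu$ are exactly the solutions of $\theta_\nu - \beta_\nu \in \pi\mathbb{Z}$, and the reduction becomes $\theta_\nu(\lambda) - \beta_\nu(\lambda) \ge \pi G_\lambda(\nu) - \pi/4$. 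Monotone decrease of $M_\nu^2$ in $\lambda$ implies $\beta_\nu < 0$, and asymptotic analysis shows $\beta_\nu \sim -1/(2\lambda)$ while $\pi G_\lambda(\nu) - \pi/4 - \theta_\nu(\lambda) \sim 1/(8\lambda)$, giving an asymptotic margin of $3/(8\lambda)$ in the favorable direction. The main technical obstacle is upgrading this asymptotic margin to a uniform inequality valid for every $\lambda > \nu \ge 0$; this requires a complementary quantitative two-sided version of the Watson inequality on $M_\nu^2$, precise enough to control both the sign and the magnitude of the correction $\beta_\nu$ against the Dirichlet phase deficit, since the two quantities being compared live on the same $O(1/\lambda)$ scale and the shifts $1/4$, $3/4$ in \eqref{eq:PtD}--\eqref{eq:PtN} leave no room for rough estimates once the floor function is applied.
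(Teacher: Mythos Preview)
Your Dirichlet argument is exactly the paper's: the phase function $\theta_\nu$ with the counting identity $\#\{k:j_{\nu,k}\le\lambda\}=\lfloor\theta_\nu(\lambda)/\pi+1/2\rfloor$, the derivative formula $\theta'_\nu=2/(\pi\lambda M_\nu^2)$, the modulus bound $M_\nu^2<2/(\pi\sqrt{\lambda^2-\nu^2})$ (equivalently Horsley's $\theta'_\nu>\sqrt{\lambda^2-\nu^2}/\lambda$), and the ``monotone difference tending to zero'' argument. That part is complete and correct.

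The Neumann part, however, has a genuine gap --- which you correctly flag yourself. You decompose $J'_\nu=-D_\nu\sin(\theta_\nu-\beta_\nu)$ and reduce to $\theta_\nu-\beta_\nu\ge\pi G_\lambda(\nu)-\pi/4$, but then only verify the inequality asymptotically, with a margin of order $3/(8\lambda)$. Upgrading this to a uniform bound by separately controlling $\beta_\nu$ against the Dirichlet phase deficit $f(\lambda)$ is, as you say, delicate: both quantities live on the same $O(1/\lambda)$ scale, and your proposed ``two-sided Watson inequality'' is not a standard result you can simply cite. There is also a structural issue you have not addressed: the combination $\theta_\nu-\beta_\nu$ (equivalently the derivative phase) is \emph{not} monotone on $(0,\infty)$ --- it decreases on $(0,\nu)$ and increases on $(\nu,\infty)$ --- so the counting identity for $j'_{\nu,k}$ itself needs a separate argument to ensure no spurious crossings below the first zero.

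The paper sidesteps both difficulties by working directly with the standard derivative phase function $\phi_\nu$, defined via $J'_\nu=N_\nu\cos\phi_\nu$, $Y'_\nu=N_\nu\sin\phi_\nu$ (so $\phi_\nu=\theta_\nu-\beta_\nu+\pi/2$ in your notation). Horsley proves the clean inequality $\phi'_\nu(\lambda)<\sqrt{\lambda^2-\nu^2}/\lambda$ for $\lambda>\nu$, together with $\phi'_\nu<0$ on $(0,\nu)$ and $\phi_\nu(\nu)>-\pi/2$. The first of these makes the difference $B^\Neu_\nu-A^\Neu_\nu$ monotone decreasing on $(\nu,\infty)$ with limit zero at infinity, giving the uniform bound by the \emph{same} one-line argument you used in the Dirichlet case; the latter two handle the non-monotone region and justify the counting identity. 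In short, the missing idea is not a sharper estimate on $M_\nu$, but the use of the second phase function $\phi_\nu$ and its companion derivative inequality.
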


\section{{Proof of Theorem \ref{thm:sher}}}
\label{sec:thmpf}

We start by introducing some additional notation. Set, for $\nu\ge 0$, $\lambda\ge 0$,  and $\aleph\in\{\Dir,\Neu\}$,
\begin{equation}\label{eq:Ams}
A_\nu^\aleph(\lambda):=
\begin{cases}
G_\lambda(\nu)+s^\aleph,\quad&\text{if }\lambda\ge\nu,\\
s^\aleph,\quad&\text{if }0\le \lambda<\nu,
\end{cases}
\end{equation}
where $G_\lambda$ is defined by \eqref{eq:Glambda} and $s^\aleph$ is defined by \eqref{eq:saleph}.
Some typical graphs of the functions $A_\nu^\aleph(\lambda)$ are shown in Figure \ref{fig:3}.

The crucial step in the proof of Theorem \ref{thm:sher} comes from the following bounds on the number of zeros of Bessel functions and their derivatives below a given number.
\begin{prop}\label{prop:mainbound} Let $\nu\ge 0$ and $\lambda\ge 0$. Then
\begin{equation}\label{eq:nucountD}
\#\left\{k\in\mathbb{N}: j_{\nu,k}\le \lambda\right\}\le \entire{A_{\nu}^\Dir(\lambda)}
\end{equation} 
and
\begin{equation}\label{eq:nucountN}
\#\left\{k\in\mathbb{N}: j'_{\nu,k}\le \lambda\right\}\ge \entire{A_{\nu}^\Neu(\lambda)}.
\end{equation} 
\end{prop}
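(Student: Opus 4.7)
The plan is to analyse the zeros of $J_\nu$ and $J'_\nu$ via their Bessel phase functions. Let $M_\nu(x):=\sqrt{J_\nu^2(x)+Y_\nu^2(x)}$ and $N_\nu(x):=\sqrt{(J'_\nu(x))^2+(Y'_\nu(x))^2}$ denote the Bessel modulus functions, and introduce the continuous phase functions $\theta_\nu, \phi_\nu$ on $(0,\infty)$ defined by $J_\nu=M_\nu\cos\theta_\nu$, $Y_\nu=M_\nu\sin\theta_\nu$, $J'_\nu=N_\nu\cos\phi_\nu$, $Y'_\nu=N_\nu\sin\phi_\nu$, and normalised by the asymptotics $\theta_\nu(x)=x-\tfrac{\nu\pi}{2}-\tfrac{\pi}{4}+o(1)$ and $\phi_\nu(x)=x-\tfrac{\nu\pi}{2}+\tfrac{\pi}{4}+o(1)$ as $x\to\infty$. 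The Wronskians $W[J_\nu,Y_\nu]=\tfrac{2}{\pi x}$ and $W[J'_\nu,Y'_\nu]=\tfrac{2(x^2-\nu^2)}{\pi x^3}$ yield the derivative identities
\[
\theta'_\nu(x)=\frac{2}{\pi x M_\nu^2(x)}>0,\qquad \phi'_\nu(x)=\frac{2(x^2-\nu^2)}{\pi x^3 N_\nu^2(x)}\ge 0,
\]
so both phase functions are monotone on $(\nu,\infty)$ and pass through the values $(k-\tfrac12)\pi$ precisely at the corresponding Bessel zeros $j_{\nu,k}$ and $j'_{\nu,k}$. This gives the zero-counting identities
\[
\#\{k\in\mathbb{N}: j_{\nu,k}\le\lambda\}=\entire{\tfrac{\theta_\nu(\lambda)}{\pi}+\tfrac12},\qquad \#\{k\in\mathbb{N}: j'_{\nu,k}\le\lambda\}=\entire{\tfrac{\phi_\nu(\lambda)}{\pi}+\tfrac12}
\]
for $\lambda\ge\nu$, including the case $\nu=0$, $\lambda=0$, which correctly produces the value $1$ corresponding to the convention $j'_{0,1}=0$. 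For $\lambda<\nu$ both counts vanish and \eqref{eq:nucountD}, \eqref{eq:nucountN} reduce to the trivial bounds $0\le \entire{s^\Dir}=0$ and $0\ge\entire{s^\Neu}=0$.

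It therefore suffices to establish the phase comparison inequalities
\[
\theta_\nu(\lambda)+\tfrac{\pi}{4}\le \pi G_\lambda(\nu)\qquad\text{and}\qquad \phi_\nu(\lambda)-\tfrac{\pi}{4}\ge \pi G_\lambda(\nu)\qquad (\lambda\ge\nu),
\]
as substituting these into the counting formulas, together with $A_\nu^{\Dir}(\lambda)=G_\lambda(\nu)+\tfrac14$ and $A_\nu^{\Neu}(\lambda)=G_\lambda(\nu)+\tfrac34$, immediately gives \eqref{eq:nucountD} and \eqref{eq:nucountN}. Direct differentiation of $\pi G_\lambda(\nu)=\sqrt{\lambda^2-\nu^2}-\nu\arccos(\nu/\lambda)$ yields $(\pi G_\lambda(\nu))'=\sqrt{\lambda^2-\nu^2}/\lambda$, while a routine expansion shows that $\theta_\nu(\lambda)-\pi G_\lambda(\nu)\to-\tfrac{\pi}{4}$ and $\phi_\nu(\lambda)-\pi G_\lambda(\nu)\to\tfrac{\pi}{4}$ as $\lambda\to\infty$. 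The phase inequalities are therefore equivalent to the statements that $\theta_\nu(\lambda)-\pi G_\lambda(\nu)$ is non-decreasing and $\phi_\nu(\lambda)-\pi G_\lambda(\nu)$ is non-increasing on $(\nu,\infty)$, which in turn (using the derivative formulas) are equivalent to the pair of Bessel modulus bounds
\begin{equation}\label{eq:modb}
M_\nu^2(x)\sqrt{x^2-\nu^2}\le \tfrac{2}{\pi},\qquad N_\nu^2(x)\,x^2\ge \tfrac{2}{\pi}\sqrt{x^2-\nu^2}\qquad (x>\nu).
\end{equation}

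The main obstacle, and the technical heart of the argument, is the verification of \eqref{eq:modb}. These are classical monotonicity-type properties of the Bessel modulus functions. The plan is to prove them by a direct Riccati analysis based on the second-order linear ODE
\[
(M_\nu^2)''+\tfrac{1}{x}(M_\nu^2)'+2\left(1-\tfrac{\nu^2}{x^2}\right)M_\nu^2=2\,N_\nu^2,
\]
obtained by differentiating $M_\nu^2=J_\nu^2+Y_\nu^2$ and applying Bessel's equation, coupled with the algebraic Wronskian identity $M_\nu^2 N_\nu^2=\tfrac14\bigl((M_\nu^2)'\bigr)^2+\tfrac{4}{\pi^2 x^2}$. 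Eliminating $N_\nu^2$ yields a nonlinear second-order ODE satisfied by $M_\nu^2$ alone, which combined with the known asymptotic $M_\nu^2(x)=\tfrac{2}{\pi x}+O(x^{-3})$ as $x\to\infty$ allows one to control the sign of $\tfrac{2}{\pi}-M_\nu^2(x)\sqrt{x^2-\nu^2}$ throughout $(\nu,\infty)$; the second bound in \eqref{eq:modb} then follows by substituting back into the Wronskian identity. A cleaner but less self-contained alternative is to invoke Nicholson's integral representation $M_\nu^2(x)=\tfrac{8}{\pi^2}\int_0^\infty K_0(2x\sinh t)\cosh(2\nu t)\,dt$ and compare term-by-term against an appropriate integral representation of $\tfrac{2}{\pi\sqrt{x^2-\nu^2}}$. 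Either route leaves no separate work at the turning point $x=\nu$, since there $G_\nu(\nu)=0$ and the phase comparisons follow by continuity from their interior counterparts.
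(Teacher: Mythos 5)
Your strategy coincides exactly with the paper's: you encode the zero counts via the Bessel phase functions $\theta_\nu$, $\phi_\nu$, observe that the proposition reduces to $\theta_\nu(\lambda)/\pi + \tfrac12 \le A^\Dir_\nu(\lambda)$ and $\phi_\nu(\lambda)/\pi + \tfrac12 \ge A^\Neu_\nu(\lambda)$ for $\lambda \ge \nu$, and you obtain these from the asymptotic matching at infinity together with the derivative comparison $\theta'_\nu(x) \ge \sqrt{x^2-\nu^2}/x \ge \phi'_\nu(x)$ on $(\nu,\infty)$. Your Wronskian-based translation of that comparison into the modulus bounds $M_\nu^2(x)\sqrt{x^2-\nu^2}\le 2/\pi$ and $N_\nu^2(x)\,x^2\ge (2/\pi)\sqrt{x^2-\nu^2}$ is correct and is a useful explicit reformulation; the dismissal of $\lambda<\nu$ as trivial is also fine.

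The genuine gap is that the modulus bounds --- which you yourself correctly identify as ``the technical heart of the argument'' --- are never actually proved. You outline two possible routes but carry out neither. For the Riccati route, you assert that the nonlinear ODE for $M_\nu^2$ together with $M_\nu^2(x)=2/(\pi x)+O(x^{-3})$ ``allows one to control the sign of $2/\pi - M_\nu^2(x)\sqrt{x^2-\nu^2}$ throughout $(\nu,\infty)$'', but that sign control is precisely the nontrivial content of the claim and no argument is supplied; note also that it is genuinely delicate, since for instance the unweighted quantity $x\,M_\nu^2(x)$ approaches $2/\pi$ from \emph{above} when $\nu>1/2$ and from \emph{below} when $\nu<1/2$, so the specific weight $\sqrt{x^2-\nu^2}$ is doing real work that must be tracked. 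For the Nicholson route, the ``appropriate integral representation'' of $2/(\pi\sqrt{x^2-\nu^2})$ and the promised term-by-term comparison are not specified. The paper closes this step simply by citing Horsley's explicit inequalities for $\theta'_\nu$ and $\phi'_\nu$ in \cite{Hor} (and the paper also offers an independent Sturm-comparison proof of the Dirichlet bound in Lemma \ref{lem:altproof}, a route you do not consider). If you invoked \cite{Hor} at this point, your reduction would close into a complete proof; as written, the central inequality is left unestablished.
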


\begin{remark} For $\lambda\in[0,\nu]$, the inequalities \eqref{eq:nucountD} and \eqref{eq:nucountN} become the trivial identities $0=\entire{s^\aleph}=0$. 
\end{remark} 

\begin{remark} For $\nu=0$, the inequality \eqref{eq:nucountD} is equivalent to $j_{0,k}\ge \pi\left(k-\frac{1}{4}\right)$, which was proved in \cite{Het}.
\end{remark} 

\begin{proof}[Proof of Proposition \ref{prop:mainbound}] We recall the representations of the Bessel functions  of the first and second kind, $J_\nu$ and $Y_\nu$, and their derivatives in terms of the so-called \emph{modulus functions} $M_\nu$ and $N_\nu$ and the \emph{phase functions} $\theta_\nu$ and $\phi_\nu$,
\[
\begin{alignedat}{2}
J_{\nu}(x)&=M_{\nu}(x)\cos\theta_{\nu}(x),
&\quad Y_{\nu}(x)&=M_{\nu}(x)\sin\theta_{\nu}(x),\\
J'_{\nu}(x)&=N_{\nu}(x)\cos\phi_{\nu}(x),
&\quad Y'_{\nu}(x)&=N_{\nu}(x)\sin\phi_{\nu}(x)
\end{alignedat}
\]
(see \cite[Eqs. 10.18.4--5]{dlmf}). We will be using various properties of the phase functions below; for a review of these properties see \cite{Hor}. 
We will be only considering the cases $\nu\ge 0$ and $x\ge 0$ for which the moduli $M_\nu(x)$ and $N_\nu(x)$ are both positive. 

Let us concentrate first on \eqref{eq:nucountD}. We have $J_\nu(x_0)=0$ if and only if $\cos\theta_\nu(x_0)=0$, and so if and only if 
\begin{equation}\label{eq:thetaZ}
\frac{1}{\pi}\theta_\nu(x_0)+\frac{1}{2}\in\mathbb{Z}.
\end{equation}
Note that the phase function $\theta_\nu(x)$ satisfies $\theta_\nu(x)\to-\frac{\pi}{2}$ as $x\to+0$ \cite[Eq. 10.18.3]{dlmf}, and that it is monotone increasing for $x\in(0,+\infty)$ \cite[Theorem 1]{Hor}, therefore \eqref{eq:thetaZ} can be replaced by 		
\[
B^\Dir_\nu(x_0)\in\mathbb{N},
\] 
where
\begin{equation}\label{eq:Bdir}
B^\Dir_\nu(x):=\frac{1}{\pi}\theta_\nu(x)+\frac{1}{2},
\end{equation}
and therefore
\begin{equation}\label{eq:numberDasB}
\#\left\{k\in\mathbb{N}: j_{\nu,k}\le \lambda\right\}=\entire{B^\Dir_\nu(\lambda)}.
\end{equation}

We have
\begin{equation}\label{eq:ftildeasympt} 
G_\lambda(\nu) = \frac{\lambda}{\pi}-\frac{\nu}{2}+O\left(\frac{1}{\lambda}\right)\qquad\text{as }\lambda\to+\infty.
\end{equation}
Using the asymptotics\footnote{See also \cite{heitman} for all the coefficients of the full asymptotic expansion and some useful remarks.}  \cite[Eq. (21)]{Hor},  \cite[Eq. 10.18.18]{dlmf},
\[
\theta_\nu(\lambda)=\lambda-\frac{\pi}{2}\left(\nu+\frac{1}{2}\right)+O\left(\frac{1}{\lambda}\right)\qquad\text{as }\lambda\to+\infty,
\] 
and \eqref{eq:Bdir}, \eqref{eq:ftildeasympt}, we obtain
\[
B^\Dir_\nu(\lambda)-A^\Dir_\nu(\lambda)\to 0\qquad\text{as }\lambda\to+\infty.
\]
Further,
\[
\frac{\dr B^\Dir_\nu(\lambda)}{\dr \lambda}=\frac{1}{\pi}\theta'_\nu(\lambda)>\frac{\sqrt{\lambda^2-\nu^2}}{\pi\lambda}=\frac{\dr A^\Dir_\nu(\lambda)}{\dr \lambda}
\]
for $\lambda\ge \nu$ by \cite[Eq. (56)]{Hor}. As we additionally have
\[
\frac{\dr B^\Dir_\nu(\lambda)}{\dr \lambda}>0=\frac{\dr A^\Dir_\nu(\lambda)}{\dr \lambda}
\]
for $\lambda\in(0,\nu]$, the function $B^\Dir_\nu(\lambda)-A^\Dir_\nu(\lambda)$ is monotone increasing on $(0,+\infty)$ and tends to zero at infinity. Thus, we have proved that
\begin{equation}\label{eq:BminusAD}
B^\Dir_\nu(\lambda)<A^\Dir_\nu(\lambda)\qquad\text{for } \lambda\in[0,+\infty).
\end{equation}
Combining \eqref{eq:numberDasB} and \eqref{eq:BminusAD}  proves  \eqref{eq:nucountD}.

We now prove \eqref{eq:nucountN}. In the same manner we have $J'_\nu(x_0)=0$ if and only if 
\begin{equation}\label{eq:phiZ}
\frac{1}{\pi}\phi_\nu(x_0)+\frac{1}{2}\in\mathbb{Z}.
\end{equation}
We note that  the phase function $\phi_\nu$ satisfies $\phi_\nu(x)\to\frac{\pi}{2}$ as $x\to+0$ \cite[Eq. 10.18.3]{dlmf}. Also, $\phi_\nu(x)$ is monotone increasing for $x\in(\nu,+\infty)$  and monotone decreasing for $x\in(0,\nu)$ \cite[Theorem 1]{Hor}, with $\phi_\nu(\nu)>-\frac{\pi}{2}$ \cite[formula (60)]{Hor}. Thus, the condition \eqref{eq:phiZ} can be replaced by   
\[
B^\Neu_\nu(x_0)\in\mathbb{N},
\] 
where
\begin{equation}\label{eq:BNeu}
B^\Neu_\nu(x):=\frac{1}{\pi}\phi_\nu(x)+\frac{1}{2},
\end{equation}
and therefore
\begin{equation}\label{eq:numberNasB}
\#\left\{k\in\mathbb{N}: j'_{\nu,k}\le \lambda\right\}=\entire{B^\Neu_\nu(\lambda)}.
\end{equation}

Using the asymptotics \cite[Eq. (22)]{Hor},  \cite[Eq. 10.18.21]{dlmf},
\[
\phi_\nu(\lambda)=\lambda-\frac{\pi}{2}\left(\nu-\frac{1}{2}\right)+O\left(\frac{1}{\lambda}\right)\qquad\text{as }\lambda\to+\infty,
\] 
and \eqref{eq:BNeu},  \eqref{eq:ftildeasympt}, we get
\[
B^\Neu_\nu(\lambda)-A^\Neu_\nu(\lambda)\to 0\qquad\text{as }\lambda\to+\infty.
\]  
Also, 
\[
\frac{\dr B^\Neu_\nu(\lambda)}{\dr\lambda}=\frac{1}{\pi}\phi'_\nu(\lambda)<\frac{\sqrt{\lambda^2-\nu^2}}{\pi \lambda}=\frac{\dr A^\Neu_\nu(\lambda)}{\dr \lambda}
\]
for $\lambda\ge \nu$ by \cite[formula following Eq. (58)]{Hor}. As we additionally have 
\[
\frac{\dr B^\Neu_\nu(\lambda)}{\dr \lambda}<0=\frac{\dr A^\Neu_\nu(\lambda)}{\dr \lambda}
\]
for $\lambda\in(0,\nu]$, the function $B^\Neu_\nu(\lambda)-A^\Neu_\nu(\lambda)$ is monotone decreasing on $(0,+\infty)$ and tends to zero at infinity. Thus, we have proved that
\begin{equation}\label{eq:BminusAN}
B^\Neu_\nu(\lambda)>A^\Neu_\nu(\lambda)\qquad\text{for } \lambda\in[0,+\infty).
\end{equation}
Combining \eqref{eq:numberNasB} and \eqref{eq:BminusAN}  proves  \eqref{eq:nucountN}.
\end{proof}

We illustrate inequalities \eqref{eq:BminusAD} and \eqref{eq:BminusAN} in Figure \ref{fig:3}.

\begin{figure}[htpb]
\centering
\includegraphics{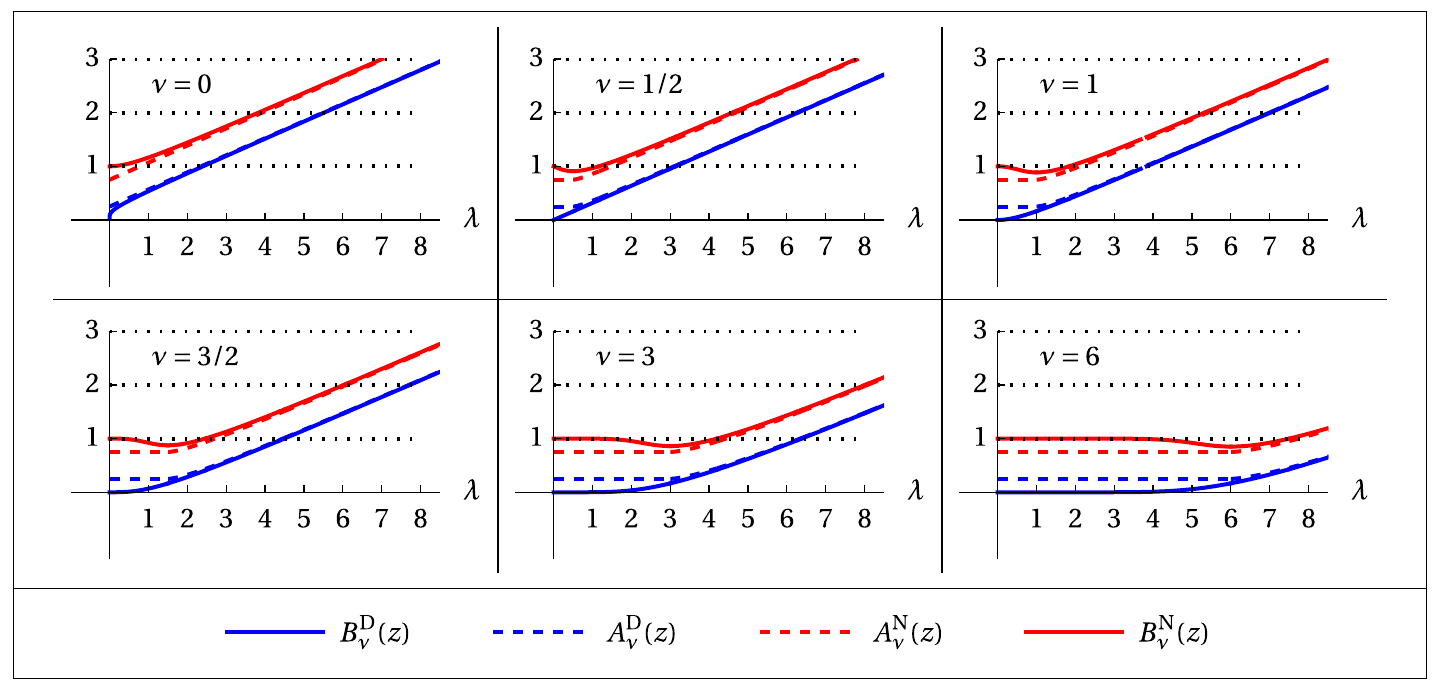}
\caption{An illustration of inequalities \eqref{eq:BminusAD} and \eqref{eq:BminusAN}. The plots of $B^\Dir_\nu(\lambda)$ and $B_\nu^\Neu(\lambda)$ are drawn using the recipe from \cite{Hor}. We remark that $B^\Neu_\nu(\lambda)$ has a minimum at $\lambda=\nu$. \label{fig:3}}
\end{figure}

The bound \eqref{eq:nucountD} can be also proved without relying on the properties of the Bessel phase function. Instead, one uses the known asymptotics of the Bessel zeros and the Sturm comparison theorem. We present this alternative argument below for an interested reader. The bound \eqref{eq:nucountN} can be proved in the same manner; we omit the details.

Let $\nu\ge 0$, and let us consider the function
\[
\sin\left(\pi A^\Dir_\nu(x)\right).
\]
Let $a_{\nu, k}$ be its $k$th  zero in $[\nu,+\infty)$, ordered increasingly. Obviously, 
\begin{equation}\label{eq:counta}
\#\left\{k\in\mathbb{N}: a_{\nu,k}\le\lambda\right\}=\entire{A^\Dir_\nu(\lambda)}.
\end{equation}
We will prove
\begin{lemma}\label{lem:altproof} 
$a_{\nu, k}\le j_{\nu,k}$ for all $k\in\mathbb{N}$.
\end{lemma}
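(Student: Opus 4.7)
My plan is to combine a Sturm comparison argument with the classical asymptotic expansion of Bessel zeros. First, I would put the Bessel equation into Liouville normal form by the substitution $u(x):=\sqrt{x}\,J_\nu(x)$, which gives $u''+q(x)u=0$ on $(0,\infty)$ with $q(x)=1-(\nu^2-\tfrac14)/x^2$, whose positive zeros are exactly the $j_{\nu,k}$. In parallel, writing $\psi(x):=\pi A^\Dir_\nu(x)$ and $v(x):=\sin\psi(x)$, a direct computation yields $\psi'(x)=\sqrt{x^2-\nu^2}/x$ on $(\nu,\infty)$; the Liouville substitution $w:=v/\sqrt{\psi'}$ then produces a Sturm--Liouville equation $w''+p(x)w=0$ on $(\nu,\infty)$ with $p$ an explicit rational function of $\psi',\psi'',\psi'''$, and the zeros of $w$ on $(\nu,\infty)$ coincide with the $a_{\nu,k}$.

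Second, I would use the classical expansion
\[
j_{\nu,k}=\beta_k-\frac{4\nu^2-1}{8\beta_k}+O(k^{-3}),\qquad \beta_k:=\left(k+\tfrac{\nu}{2}-\tfrac{1}{4}\right)\pi,
\]
from \cite[Eq.~10.21.19]{dlmf}, together with the companion expansion $a_{\nu,k}=\beta_k-\nu^2/(2\beta_k)+O(k^{-3})$ obtained by inverting $\psi(a_{\nu,k})=k\pi$ (a routine Taylor expansion of $\sqrt{a^2-\nu^2}-\nu\arccos(\nu/a)$ in $1/a$). Subtracting gives $j_{\nu,k}-a_{\nu,k}=1/(8\beta_k)+O(k^{-3})>0$ for all $k$ past some explicit $k_0$, which anchors the inequality at infinity.

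Third, Sturm's comparison theorem applied to $u$ and $w$ on $(\nu,\infty)$ yields an interlacing of their zeros, and combining this interlacing with the asymptotic anchor would allow me to propagate $a_{\nu,k}\le j_{\nu,k}$ from $k\ge k_0$ down to every $k\ge 1$ by backward induction. The main obstacle I expect is that $q-p$ is not of constant sign on $(\nu,\infty)$: a short computation gives
\[
q(x)-p(x)=-\frac{1}{4x^2}+\frac{\nu^2(6x^2-\nu^2)}{4x^2(x^2-\nu^2)^2},
\]
which is positive near $x=\nu$ and negative for large $x$. A single global application of Sturm therefore will not suffice; the proof would likely have to split $(\nu,\infty)$ at a judiciously chosen threshold, apply Sturm separately on each subinterval, and reconcile the zero counts at the transition point against the absolute index shift fixed by the asymptotic anchor.
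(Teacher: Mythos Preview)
Your approach is essentially the paper's: the function $w=v/\sqrt{\psi'}$ you introduce via the Liouville substitution is precisely the comparison function $V_\nu(x)=\frac{\sqrt{x}}{(x^2-\nu^2)^{1/4}}\sin(\pi A^\Dir_\nu(x))$ that the paper writes down explicitly, and both arguments hinge on Sturm comparison anchored by large-$k$ asymptotics. The paper's variant introduces a shift parameter $b\in(0,\pi/4)$ into the sine so that the leading-order asymptotics already separate the zeros (no second-order terms needed), runs a forward induction to a contradiction, and then lets $b\to 0$; your variant with second-order expansions of $j_{\nu,k}$ and $a_{\nu,k}$ followed by backward induction would work equally well.

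The genuine slip is a sign error in your computation of $q-p$. Carrying out the Liouville transformation gives
\[
p(x)=(\psi')^2+\frac{1}{2}\,\frac{\psi'''}{\psi'}-\frac{3}{4}\left(\frac{\psi''}{\psi'}\right)^2
=1-\frac{\nu^2}{x^2}-\frac{\nu^2(6x^2-\nu^2)}{4x^2(x^2-\nu^2)^2},
\]
and since $q(x)=1-(\nu^2-\frac{1}{4})/x^2=1-\nu^2/x^2+1/(4x^2)$, the correct difference is
\[
q(x)-p(x)=+\frac{1}{4x^2}+\frac{\nu^2(6x^2-\nu^2)}{4x^2(x^2-\nu^2)^2},
\]
with $+$ rather than your $-$ in front of $1/(4x^2)$. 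Both terms are positive on $(\nu,\infty)$, so $q>p$ everywhere there, and the obstacle you anticipate simply does not arise: a single global application of Sturm suffices, with no splitting or threshold needed. Once this sign is corrected, your backward induction from the asymptotic anchor $j_{\nu,k}-a_{\nu,k}=1/(8\beta_k)+O(k^{-3})$ goes through directly.
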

Together with \eqref{eq:counta}, Lemma \ref{lem:altproof} immediately implies  \eqref{eq:nucountD}.

\begin{proof}[Proof of Lemma \ref{lem:altproof}] The function $U_\nu(x):=\sqrt{x}J_\nu(x)$ satisfies the differential equation
\begin{equation}\label{eq:a11}
U''_\nu(x) + \left(1 - \frac{\nu^2-1/4}{x^2}\right) U_\nu(x) = 0.
\end{equation}
Consider the function
\[
V_\nu(x) := \frac{\sqrt{x}}{(x^2-\nu^2)^{1/4}} \sin \left(\pi A^\Dir_\nu(x) + b\right)
\]
with some $b \in \left[0, \frac{\pi}{4}\right)$.
Then
\begin{align*}
V'_\nu(x) &= - \frac{\nu^2}{2 \sqrt x \left(x^2-\nu^2\right)^{5/4}} \sin\left(\pi A^\Dir_\nu(x) + b\right)
+ \frac{\left(x^2-\nu^2\right)^{1/4}}{\sqrt x}\cos\left(\pi A^\Dir_\nu(x) + b\right),
\\
V''_\nu(x) &= \left( \frac{6\nu^2x^2 - \nu^4}{4 x^{3/2} (x^2-\nu^2)^{9/4}} 
- \frac{(x^2-\nu^2)^{3/4}}{x^{3/2}} \right) \sin\left(\pi A^\Dir_\nu(x) + b\right) .
\end{align*}
Therefore,
\begin{equation}\label{eq:a12}
V''_\nu(x) + 
\left(1 - \frac{\nu^2}{x^2} - \frac{\nu^2(6x^2-\nu^2)}{4x^2(x^2-\nu^2)^2} \right) V_\nu(x) = 0
\qquad \text{for }  x\in(\nu, +\infty).
\end{equation}
Denote by $v_{\nu,k}= v_{\nu,k}(b)$ the $k$th zero of the function $V_\nu$.
By the definitions of $V_\nu$ and $A_\nu^\Dir$,
\[
\sqrt{v_{\nu,k}^2-\nu^2} - \nu \arccos\frac{\nu}{v_{\nu,k}} +  \frac{\pi}{4} + b  = \pi k.
\]
As
\[
\sqrt{v_{\nu,k}^2-\nu^2} = v_{\nu,k} + O\left(v_{\nu,k}^{-1}\right) 
\qquad \text{and}\qquad \arccos\frac{\nu}{v_{\nu,k}} = \frac\pi2 +  O\left(v_{\nu,k}^{-1}\right)  \qquad\text{as } k \to \infty,
\]
we have
\[
v_{\nu,k} (b) = \pi \left(k + \frac{\nu}{2}-\frac{1}{4}\right) - b + O\left(k^{-1}\right) \qquad\text{as } k \to \infty.
\]
On the other hand the asymptotics
\[
j_{\nu,k} = \pi \left(k + \frac\nu2 - \frac14\right) + O\left(k^{-1}\right) \qquad\text{as } k \to \infty.
\]
is well known, see for example \cite[Eq. 10.21.19]{dlmf}.

Suppose that $b>0$. Then there exists $K\in\mathbb{N}$ such that
\begin{equation}\label{eq:a13}
j_{\nu,k} > v_{\nu,k}(b)\qquad\text{for }k\ge K.
\end{equation}

The coefficient in front of $U_\nu$ in \eqref{eq:a11} is greater than 
the coefficient in front of $V_\nu$ in \eqref{eq:a12}:
\[
1 - \frac{\nu^2-1/4}{x^2} > 1 - \frac{\nu^2}{x^2}
> 1 - \frac{\nu^2}{x^2} - \frac{\nu^2(6x^2-\nu^2)}{4x^2(x^2-\nu^2)^2}
\qquad\text{for all } x \in [\nu, +\infty) .
\]
By the Sturm comparison theorem there is 
a zero of $U_\nu$ between $v_{\nu,k}(b)$ and $v_{\nu,k+1}(b)$.
So, if $j_{\nu, k_0} \le v_{\nu, k_0}(b)$ for some number $k_0$,
then $j_{\nu, k_0+1} \le v_{\nu, k_0+1}(b)$, and by induction
$j_{\nu, k} \le v_{\nu, k}(b)$ for all $k \ge k_0$ which contradicts \eqref{eq:a13}.
Therefore, \eqref{eq:a13} holds for all natural $k$.

Finally, each function $v_{\nu, k}(b)$ is continuous in $b$.
Thus, 
\[
j_{\nu, k} \ge v_{\nu, k}(0) = a_{\nu,k}.
\]
\end{proof}

Returning now to the proof of Theorem \ref{thm:sher}, we rewrite \eqref{eq:PtDN} as 
\begin{equation}\label{eq:PtA}
\Pt_d^\aleph(\lambda)=\sum_{m=0}^{\entire{\lambda-d/2+1}} \kappa_{d,m}\entire{A_{m+d/2-1}^\aleph(\lambda)}.
\end{equation}
Theorem  \ref{thm:sher} now immediately follows from  Proposition \ref{prop:mainbound} with account of \eqref{eq:PtA}, \eqref{eq:NDballfinite}, and \eqref{eq:NNdisk}.

\section{From the weighted lattice point count towards P\'{o}lya's conjecture}\label{sec:analytic}

By Theorem   \ref{thm:sher}, the Dirichlet P\'{o}lya's conjecture for $\ball{d}$ would follow immediately if we can prove that 
\begin{equation}\label{eq:Pbounds}
 \Pt_d^\Dir(\lambda)<W_d(\lambda)
\end{equation}
for all $\lambda\in(0,+\infty)$, where $W_d(\lambda)$ is defined by \eqref{eq:Wd}.

Similarly, the Neumann P\'{o}lya's conjecture for $\disk$  would follow immediately if we can prove that 
\begin{equation}\label{eq:PboundsN}
 \Pt_2^\Neu(\lambda)>W_2(\lambda)
\end{equation}
for all $\lambda\in(\Lambda_0,+\infty)$; we note that we have already dealt with $\lambda\le\Lambda_0$ by Lemma \ref{lem:Lambda0}.

We establish \eqref{eq:Pbounds} in the following cases, which will be dealt with separately.

\begin{theorem}\label{thm:count1}
The inequality  
\begin{equation}\label{eq:ptD2}
\Pt_2^\Dir(\lambda)<W_2(\lambda)=\frac{\lambda^2}{4}
\end{equation}
holds for all $\lambda>0$.
\end{theorem}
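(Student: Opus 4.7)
The plan is to combine a layer-cake decomposition of $\Pt_2^\Dir(\lambda)$ with an integral representation of $W_2(\lambda)$, comparing them via the convexity of $G_\lambda$. Set $K := \entire{\lambda/\pi + 1/4}$ and, for $j = 1, \dots, K$, let $t_j := G_\lambda^{-1}(j - 1/4)$---the unique $x \in [0,\lambda)$ with $G_\lambda(x) = j - 1/4$, well-defined because $G_\lambda$ decreases monotonically from $G_\lambda(0) = \lambda/\pi$ to $G_\lambda(\lambda) = 0$. Summing the formula \eqref{eq:PtDN} over horizontal layers $y = j - 1/4$ yields the identity
\[
\Pt_2^\Dir(\lambda) \;=\; K + 2\sum_{j=1}^{K}\entire{t_j},
\]
in which the lone $K$ records the $m = 0$ column contribution (weighted by $\kappa_{2,0} = 1$).

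On the integral side, a direct computation (e.g.\ the substitution $z = \lambda\cos\theta$) shows $\int_0^\lambda G_\lambda(z)\,dz = \lambda^2/8$, hence by inversion $W_2(\lambda) = 2\int_0^{\lambda/\pi} G_\lambda^{-1}(y)\,dy$. Differentiating \eqref{eq:Glambda} twice gives $G_\lambda''(z) = 1/(\pi\sqrt{\lambda^2 - z^2}) > 0$, so both $G_\lambda$ and $G_\lambda^{-1}$ are convex (the latter remaining convex when extended by zero past $\lambda/\pi$). The midpoint (Hermite--Hadamard) inequality then gives $t_j \leq \int_{j - 3/4}^{j + 1/4} G_\lambda^{-1}(y)\,dy$ for each $j$; summing these disjoint unit intervals and using $\entire{t_j} \leq t_j$ reduces the target to the scalar inequality
\[
K \;<\; 2\int_0^{1/4} G_\lambda^{-1}(y)\,dy \;+\; 2\int_{K + 1/4}^{\lambda/\pi} G_\lambda^{-1}(y)\,dy \;+\; 2\sum_{j=1}^K \{t_j\},
\]
in which the last two terms on the right are nonnegative.

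The main obstacle is establishing this scalar inequality for every $\lambda > 0$. For $\lambda < 3\pi/4$ one has $K = 0$, so $\Pt_2^\Dir(\lambda) = 0 < \lambda^2/4$ trivially. For large $\lambda$, the asymptotic $\lambda - G_\lambda^{-1}(1/4) = O(\lambda^{1/3})$---obtained by linearising $G_\lambda(\tau) = 1/4$ near $\tau = \lambda$ via the expansion $\arccos(1 - s) = \sqrt{2s}(1 + O(s))$---yields $2\int_0^{1/4} G_\lambda^{-1}(y)\,dy \geq G_\lambda^{-1}(1/4)/2 = \lambda/2 - O(\lambda^{1/3})$, which comfortably dominates $K \leq \lambda/\pi + 1/4$ since $1/2 > 1/\pi$. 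The delicate regime is the intermediate one, just above each jump of $K$, where the easily-available lower bound on the first integral by itself can fall short of $K$. There one must either retain the positive fractional-part contribution $2\sum\{t_j\}$ discarded in the naive argument, or exploit the fact that $\Pt_2^\Dir$ is piecewise constant---jumping only at the countable set of $\lambda$ satisfying $\sqrt{\lambda^2 - m^2} - m\arccos(m/\lambda) = (n - 1/4)\pi$ for some $m \in \mathbb{N}_0$, $n \in \mathbb{N}$---and verify the strict inequality directly at each jump using this explicit transcendental relation together with the elementary value of $W_2$ there.
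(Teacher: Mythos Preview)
Your layer-cake identity $\Pt_2^\Dir(\lambda)=K+2\sum_{j=1}^K\entire{t_j}$ and the Hermite--Hadamard comparison $t_j\le\int_{j-3/4}^{j+1/4}G_\lambda^{-1}$ are both correct, and the resulting scalar inequality is exactly the right target. But the proof is not complete: you dispatch $K=0$ and ``large $\lambda$'' and then stop, writing that in the intermediate regime ``one must either'' exploit the fractional parts $2\sum\{t_j\}$ or verify the inequality jump by jump. Neither option is carried out. The large-$\lambda$ argument is genuinely asymptotic---you never make the $O(\lambda^{1/3})$ constant explicit---so you have not even specified where the intermediate regime ends. Even granting a finite threshold, ``verify at each jump using the explicit transcendental relation'' is a programme, not a proof, and the fractional-part route has no mechanism at all: each $\{t_j\}$ can be arbitrarily small, and you offer no lower bound on the sum.

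There is also a structural reason your estimate runs out of room: you use only the convexity of $G_\lambda^{-1}$, never the slope bound $|G_\lambda'|\le\tfrac12$. That bound is precisely what the paper's argument hinges on. Rather than slicing at the shifted heights $j-\tfrac14$ and comparing $t_j$ to an integral, the paper slices at integer heights $y=n$ and proves (Lemma~\ref{lem:Dcount}) that on each resulting strip the number of ``bad'' abscissae $m$ with $g(m)\ge n+\tfrac34$ is at most half the total---a consequence of $|g'|\le\tfrac12$ combined with convexity. Summing these strip estimates gives \eqref{eq:gmaincount} uniformly in $\lambda$, with no asymptotic regime and no residual cases. Your Hermite--Hadamard framework is clean and the decomposition is a natural dual to the paper's, but without bringing in the slope constraint (or some substitute yielding a uniform bound) it does not close.
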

Theorem \ref{thm:count1} will be proved in \S\ref{sec:proofD2}. Together with Theorem \ref{thm:sher}, it implies Theorem \ref{thm:polyaballD} in the planar case.

\begin{theorem}\label{thm:count2}
The inequalities  \eqref{eq:Pbounds} 
hold for all $d\ge 3$ and $\lambda>0$.
\end{theorem}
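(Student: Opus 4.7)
The plan is to compare $\Pt_d^\Dir(\lambda)$ to a continuous integral whose value equals $W_d(\lambda)$ exactly. A direct calculation via $z=\lambda\sin\theta$ and Beta-function identities yields
\[
\int_0^\lambda \frac{2 z^{d-2}}{(d-2)!}\, G_\lambda(z) \de z \;=\; w_d\lambda^d \;=\; W_d(\lambda),
\]
with $\frac{2z^{d-2}}{(d-2)!}$ being the asymptotic density of the multiplicities $\kappa_{d,m}$. Thus the task reduces to showing that the specific discretisation of the left-hand side arising from the lattice sum, with sample points at $m+\frac{d}{2}-1$, strictly underestimates the integral.

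Discarding the floor in \eqref{eq:PtDN} via $\lfloor x+\tfrac14\rfloor\le x+\tfrac14$, I would write
\[
\Pt_d^\Dir(\lambda)\;\le\;\sum_{m=0}^{M}\kappa_{d,m}\,G_\lambda\!\left(m+\tfrac{d}{2}-1\right)\;+\;\tfrac14\sum_{m=0}^{M}\kappa_{d,m},
\]
where $M=\entire{\lambda-d/2+1}$. The second sum telescopes to an explicit polynomial of order $\lambda^{d-1}$, playing the role of a Weyl-type surface contribution. The first sum I would treat by Abel summation, using the decomposition $\kappa_{d,m}=\binom{m+d-2}{d-2}+\binom{m+d-3}{d-2}$, and cast it as a Riemann-type discretisation of $\int_0^\lambda \frac{2z^{d-2}}{(d-2)!}G_\lambda(z)\,\de z$. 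The half-integer shift $\frac{d}{2}-1$ lands the sample points at the midpoints; this is particularly transparent in the exceptional dimensions $d=3,4$, where $\kappa_{3,m}=2(m+\tfrac12)$ and $\kappa_{4,m}=(m+1)^2$ match the density $\frac{2z^{d-2}}{(d-2)!}$ exactly at $z=m+\tfrac{d}{2}-1$, so the sum becomes a pure midpoint Riemann sum for $2zG_\lambda(z)$ and $z^2G_\lambda(z)$, respectively.

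The hard part will be extracting a uniformly strict bound valid for every $\lambda>0$. The quadrature error $\sum f(m+\tfrac12)-\int f$ does not have a fixed sign on $[0,\lambda]$: while $G_\lambda$ is itself convex with $G_\lambda''(z)=(\pi\sqrt{\lambda^2-z^2})^{-1}>0$, the weighted integrand $f(z)=z^{d-2}G_\lambda(z)$ changes convexity (concave near $z=0$, convex near $z=\lambda$). One will therefore need an Euler--Maclaurin representation of the error with explicit periodic Bernoulli kernel, or a Poisson-summation argument, to extract a globally favourable sign modulo controllable endpoint contributions, and then verify that the negative quadrature error comfortably beats the positive $\frac14$-shift and the boundary polynomial. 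I also anticipate that the low dimensions $d=3,4$, which have the smallest multiplicity growth and hence the tightest margin, as well as the small-$\lambda$ regime where $M\in\{0,1\}$ and the integral approximation is coarsest, will require separate, more delicate handling before the general case $d\ge 5$ follows smoothly.
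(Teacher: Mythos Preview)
The approach has a decisive gap: once you pass to $\lfloor x+\tfrac14\rfloor\le x+\tfrac14$, the inequality you are aiming for becomes false for all large $\lambda$. The positive $\tfrac14$-shift term equals
\[
\tfrac14\sum_{m=0}^{M}\kappa_{d,m}\;=\;\tfrac14\!\left(\binom{M+d-1}{d-1}+\binom{M+d-2}{d-1}\right)\;\sim\;\frac{\lambda^{d-1}}{2(d-1)!},
\]
whereas the quadrature discrepancy $\sum_m\kappa_{d,m}\,G_\lambda(m+\tfrac{d}{2}-1)-W_d(\lambda)$ is only $O(\lambda^{d-2})$: the Euler--Maclaurin corrections involve boundary values and derivatives of $z^{d-2}G_\lambda(z)$, all of order at most $\lambda^{d-2}$, and the integrable singularity of $G_\lambda''$ at $z=\lambda$ does not change this. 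Hence your upper bound exceeds $W_d(\lambda)$ by a term of exact order $\lambda^{d-1}$, which no quadrature refinement can absorb. The mechanism you have discarded is that $\lfloor G+\tfrac14\rfloor\approx G-\tfrac14$ \emph{on average} (fractional part $\approx\tfrac12$); it is precisely this average deficit of $\tfrac14$ per lattice column, totalling $\sim-\frac{\lambda^{d-1}}{2(d-1)!}$, that drives $\Pt_d^\Dir(\lambda)$ below $W_d(\lambda)$. The bound $\lfloor x+\tfrac14\rfloor\le x+\tfrac14$ reverses this sign.

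The paper never discards the floor. It uses a combinatorial identity (based on $\kappa_{d,m}=\binom{m+d-3}{d-3}+2\sum_{n<m}\binom{n+d-3}{d-3}$) to rewrite
\[
\Pt^\Dir_d(\lambda)=\sum_{n\ge0}\binom{n+d-3}{d-3}\,\tilde{\Pt}^\Dir_{n+d/2-1}(\lambda),
\qquad
\tilde{\Pt}^\Dir_r(\lambda)=\Big\lfloor G_\lambda(r)+\tfrac14\Big\rfloor+2\sum_{j\ge1}\Big\lfloor G_\lambda(r+j)+\tfrac14\Big\rfloor,
\]
a sum of \emph{two-dimensional} shifted lattice counts under $t\mapsto G_\lambda(t+r)$. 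The planar lemma (Theorem~\ref{thm:countDdisk}), whose convexity-plus-slope-$\tfrac12$ argument handles the floor directly, gives $\tilde{\Pt}^\Dir_r(\lambda)<2\int_r^\lambda G_\lambda$ for each $r$. Summing over $n$ yields $\Pt^\Dir_d(\lambda)<2\int_0^\lambda f_d(z)G_\lambda(z)\,\dr z$ with the step function $f_d(z)=\binom{\lfloor z-d/2+1\rfloor+d-2}{d-2}$; the elementary bound $\int_0^z f_d\le z^{d-1}/(d-1)!$ followed by integration by parts against the decreasing $G_\lambda$ then lands exactly on $\frac{2}{(d-2)!}\int_0^\lambda z^{d-2}G_\lambda(z)\,\dr z=W_d(\lambda)$.
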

Theorem \ref{thm:count2} will be proved in \S\ref{sec:proofDd}. Together with Theorem \ref{thm:sher}, it implies Theorem \ref{thm:polyaballD} for higher-dimensional balls.

In the Neumann case, the situation is more delicate, as we cannot expect \eqref{eq:PboundsN} to hold for all values of $\lambda\in(0,+\infty)$ since $\Pt_2^\Neu(\lambda)$ is identically zero for $\lambda<\frac{\pi}{4}$, see Figure \ref{fig:experiments}.

\begin{figure}[htpb]
\centering
\includegraphics{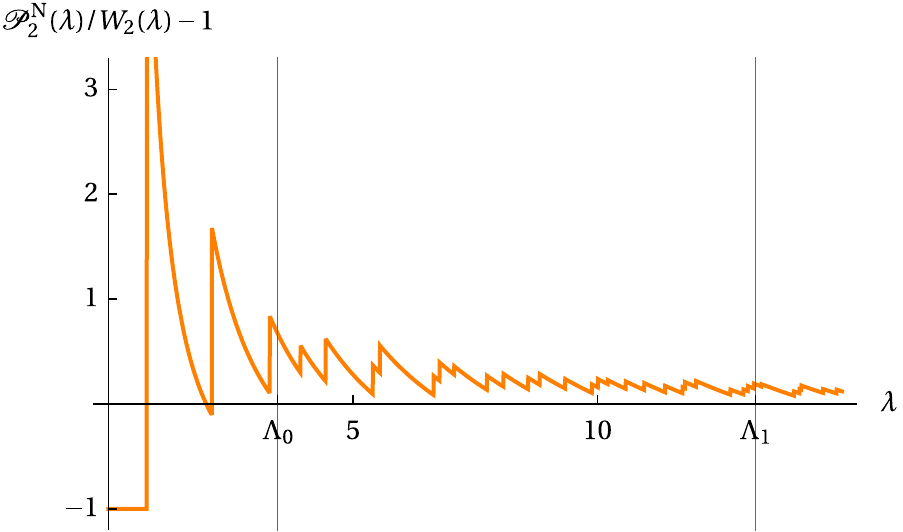}
\caption{A numerical experiment: the computed $\Pt_2^\Neu(\lambda)/W_2(\lambda)-1$ as a function of $\lambda$.
\label{fig:experiments}}
\end{figure}

We prove the following results.

\begin{theorem}\label{thm:count3}
The inequality
\[
\Pt_2^\Neu(\lambda)>W_2(\lambda)=\frac{\lambda^2}{4}
\]
holds for all $\lambda\ge \Lambda_1$, where $\Lambda_1$ is given by \eqref{eq:Lambda1}. 
\end{theorem}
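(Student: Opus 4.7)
The plan is to exploit the horizontal-slice representation of $\Pt_2^\Neu(\lambda)$ as counting shifted lattice points in the symmetric region $\tilde\Ph_\lambda = \{(z,y) : |z|\leq \lambda,\ 0\leq y\leq G_\lambda(|z|)\}$ of area $\lambda^2/4$. Specifically, I would write
\[
\Pt_2^\Neu(\lambda) = \sum_{k=1}^{K} \bigl(2\entire{z_k} + 1\bigr), \qquad z_k := G_\lambda^{-1}(k - 3/4), \quad K = \entire{\lambda/\pi + 3/4},
\]
and apply the elementary bound $\entire{z_k} > z_k - 1$ to reduce the target inequality $\Pt_2^\Neu(\lambda) > \lambda^2/4$ to
\[
\sum_{k=1}^{K} z_k > \int_0^{\lambda/\pi} z(y)\,dy + \frac{K}{2},
\]
using the area identity $\int_0^{\lambda/\pi} z(y)\, dy = \lambda^2/8$.

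The function $z = G_\lambda^{-1}$ is convex and decreasing on $[0, \lambda/\pi]$, with $z(0) = \lambda$, $z(\lambda/\pi) = 0$, and derivative $|z'(y)| = \pi/\arccos(z(y)/\lambda)$ integrating to $\int_0^{\lambda/\pi} |z'| = \lambda$. The sample points $k - 3/4$ lie $1/4$ to the \emph{left} of the midpoints $k - 1/2$ of the unit intervals $[k-1, k]$, which for a convex decreasing function produces a systematic upward shift of the Riemann sum. Quantitatively, a second-order Taylor expansion around each midpoint, combined with convexity ($z'' > 0$) and monotonicity of $|z'|$, yields the pointwise inequality $z(k - 3/4) \geq z(k - 1/2) + |z'(k - 1/2)|/4$.

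Summing these bounds and combining them with a careful treatment of the two endpoint contributions -- near $y = 0$, where $z$ has a vertical tangent, and near $y = \lambda/\pi$, where $z$ has slope $-2$ -- should yield an explicit lower estimate of the shape
\[
\Pt_2^\Neu(\lambda) \geq \frac{\lambda^2}{4} + \frac{(3\pi - 8)\lambda}{6\pi} - 1,
\]
which is strictly greater than $\lambda^2/4$ exactly when $\lambda > \Lambda_1 = 6\pi/(3\pi - 8)$. The coefficient decomposes as $(3\pi - 8)/(6\pi) = 1/2 - 4/(3\pi)$, reflecting the expected Weyl boundary contribution $\mathrm{Length}(\partial\disk)/(4\pi) = 1/2$ corrected by an explicit $4/(3\pi)$ term coming from the boundary behavior of $G_\lambda$ near $z = \lambda$.

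The main obstacle is performing the convexity/Taylor computation precisely enough to extract the \emph{exact} coefficient $(3\pi - 8)/(6\pi)$, rather than a merely asymptotically equivalent one. The difficulty is that $\sum_{k=1}^K z(k - 1/2)$ and $\int_0^K z(y)\,dy$ are related in the wrong direction for the midpoint rule applied to a convex function, and the endpoint contributions (where the Taylor remainder is largest) must be bounded without loss. Extracting exactly the required $(3\pi - 8)/(6\pi)$ demands using the specific closed form $G_\lambda'(z) = -\arccos(z/\lambda)/\pi$ to evaluate the telescoping sums explicitly, rather than only controlling them in magnitude.
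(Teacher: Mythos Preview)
Your horizontal-slicing representation $\Pt_2^\Neu(\lambda)=\sum_{k=1}^{K}(2\lfloor z_k\rfloor+1)$ with $z_k=G_\lambda^{-1}(k-3/4)$ is correct, and the idea of exploiting convexity of $G_\lambda^{-1}$ is sound. However, the proposal remains a sketch: you explicitly flag the ``main obstacle'' without resolving it, and the claimed final bound $\Pt_2^\Neu(\lambda)\ge\lambda^2/4+(3\pi-8)\lambda/(6\pi)-1$ is stated as a target, not derived. In particular, the tangent-line inequality $z(k-3/4)\ge z(k-1/2)+|z'(k-1/2)|/4$ is valid, but combining it with the midpoint rule for the convex function $z$ gives $\sum z(k-1/2)\le\int_0^K z$, which points the wrong way; the compensating sum $\tfrac14\sum|z'(k-1/2)|$ is of order $\lambda/4$, while the loss from $\lfloor z_k\rfloor>z_k-1$ is $K\approx\lambda/\pi$, and neither of these naturally produces the specific coefficient $(3\pi-8)/(6\pi)$. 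That coefficient (and the constant $-1$) appear to be reverse-engineered from the target $\Lambda_1$ rather than extracted from the analysis you outline.

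The paper avoids this difficulty by staying with the \emph{vertical} sum $\sum_{m}\lfloor G_\lambda(m)+3/4\rfloor$ and proving a general lower bound (Theorem~\ref{thm:countNdisk}) via a horizontal level-set decomposition at heights $n+1/4$. This yields
\[
\Pt_2^\Neu(\lambda)>\frac{\lambda^2}{4}+\frac{1}{4}\Bigl(3G_\lambda^{-1}\bigl(\tfrac14\bigr)-\lambda\bigl(1+\tfrac{4}{\pi}\bigr)-3\Bigr),
\]
so that only a \emph{single} value of the inverse function, $G_\lambda^{-1}(1/4)$, enters the estimate, rather than the full sum $\sum z_k$. One then checks directly (Lemma~\ref{lem:Ginv} with $\sigma=\arccos\tfrac56$) that $G_\lambda^{-1}(1/4)\ge 5\lambda/6$ for $\lambda$ large enough, which gives the linear term $\lambda(3\pi-8)/(8\pi)-3/4$ and hence the threshold $\Lambda_1=6\pi/(3\pi-8)$. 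The advantage of the paper's route is that it converts the delicate Riemann-sum comparison you are attempting into a single explicit inequality for $G_\lambda^{-1}(1/4)$, sidestepping the midpoint-rule sign problem entirely.
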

Theorem \ref{thm:count3} will be proved in \S\ref{sec:proofN2}. Together with Theorem \ref{thm:sher}, it implies Theorem \ref{thm:polyadiskN}.

We are further able to eliminate the remaining gap in the Neumann case.
\begin{theorem}\label{thm:count4}
The inequality \eqref{eq:PboundsN} holds for any $\lambda\in\left(\Lambda_0, \Lambda_1\right)$. 
\end{theorem}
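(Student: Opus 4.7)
The plan is to reduce Theorem \ref{thm:count4} to a finite collection of strict inequalities between rational numbers, verifiable by exact arithmetic in accordance with Principles 1 and 2 from \S\ref{sec:WL}. The key structural fact is the monotonicity of $\Pt_2^\Neu$: differentiating \eqref{eq:Glambda} in $\lambda$ gives $\partial_\lambda G_\lambda(m) = \sqrt{\lambda^2-m^2}/(\pi\lambda) \geq 0$ for $\lambda \geq m$, so each floor $\entire{G_\lambda(m)+3/4}$ appearing in \eqref{eq:PtDN} is non-decreasing in $\lambda$, and hence so is $\Pt_2^\Neu$ itself. Consequently, if $\lambda_- < \lambda_+$ are rationals in $[\Lambda_0,\Lambda_1]$ with
\[
4\,\Pt_2^\Neu(\lambda_-) \;>\; \lambda_+^2,
\]
then $\Pt_2^\Neu(\lambda)\geq\Pt_2^\Neu(\lambda_-)>\lambda_+^2/4\geq\lambda^2/4$ for every $\lambda\in[\lambda_-,\lambda_+]$, which is exactly \eqref{eq:PboundsN}. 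It therefore suffices to exhibit a finite chain of rationals $\Lambda_0=\lambda_0<\lambda_1<\dots<\lambda_N$ with $\lambda_N\geq\Lambda_1$ and the displayed inequality valid on each link $[\lambda_i,\lambda_{i+1}]$.

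Such a chain is built by a left-to-right sweep: at step $i$ one evaluates $\Pt_2^\Neu(\lambda_i)$ exactly and chooses a rational $\lambda_{i+1}\in(\lambda_i,\Lambda_1]$ with $\lambda_{i+1}^2<4\,\Pt_2^\Neu(\lambda_i)$, as large as possible, terminating once $\lambda_{i+1}\geq\Lambda_1$. The critical subroutine is the exact evaluation of $\Pt_2^\Neu(\lambda_-)$ at a rational $\lambda_-$, which by \eqref{eq:PtDN} amounts to determining $\entire{G_{\lambda_-}(m)+3/4}$ for each $m\in\{0,1,\dots,\entire{\lambda_-}\}$. For this we produce a certified rational enclosure of $G_{\lambda_-}(m)+3/4$ by combining rigorous two-sided rational bounds for $\pi$ (e.g.\ from a truncated Machin-type series with explicit remainder), for $\sqrt{\lambda_-^2-m^2}$ (by bracketing the rational radicand between two rational squares), and for $\arccos(m/\lambda_-)$ (from a truncated Taylor series at a convenient rational base point with an explicit remainder bound), all propagated through interval arithmetic. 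Whenever the resulting interval is contained in a single $[n,n+1)$, the floor equals $n$; otherwise the precision of the constituent bounds is refined and the enclosure recomputed.

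The main obstacle, which dictates the adaptive nature of the algorithm, is that each sample $\lambda_-$ must be chosen sufficiently far from the jump loci of $\Pt_2^\Neu$ --- the (generically transcendental) solutions of $G_\lambda(m)+3/4\in\mathbb{Z}$ --- since the enclosures needed to resolve the corresponding floor become arbitrarily tight as one approaches such a locus. If the enclosure for some $m$ straddles an integer even after tightening, the candidate $\lambda_i$ is nudged slightly leftward, which by monotonicity only weakens, but does not invalidate, the lower bound on $\Pt_2^\Neu(\lambda_i)$. Since the jump loci form a finite subset of the compact interval $[\Lambda_0,\Lambda_1]$, a workable $\lambda_i$ exists after finitely many nudges, and because $\Pt_2^\Neu(\lambda)-\lambda^2/4$ stays bounded below by a strictly positive constant near each jump (a property which the successful execution of the algorithm certifies, and which is consistent with Figure \ref{fig:experiments}), the outer sweep also terminates in finitely many steps. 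The resulting list of certified triples $(\lambda_i,\Pt_2^\Neu(\lambda_i),\lambda_{i+1})$, produced by the \texttt{Mathematica} notebook referenced in the first footnote, together with the reduction of the first paragraph, establishes Theorem \ref{thm:count4}.
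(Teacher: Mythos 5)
Your proposal takes essentially the same approach as the paper: a left-to-right sweep over rational sample points, using the monotonicity of $\Pt_2^\Neu$ and the margin $\Pt_2^\Neu(\lambda_i)-\lambda_i^2/4$ to certify the inequality on each link $[\lambda_i,\lambda_{i+1}]$, with all comparisons reduced to exact arithmetic over $\Q$. The one implementation difference worth flagging is that you insist on evaluating $\Pt_2^\Neu(\lambda_i)$ \emph{exactly} via two-sided rational enclosures of $G_{\lambda_i}(m)$, which forces the nudging workaround near jump loci; the paper instead replaces $G_\lambda$ by a certified one-sided rational \emph{lower} bound $\underline{G}_\lambda\le G_\lambda$ (upper bounds on $\pi$ and $\arccos$, lower bound on the square root), giving $\underline{\Pt}_2^\Neu(\lambda)\le\Pt_2^\Neu(\lambda)$ term-by-term through the floor, so it never needs to resolve which side of an integer a value falls on and dispenses with nudging entirely, at the harmless cost of possibly shorter step lengths.
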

The proof of this result, presented in \S\ref{sec:Neumann2computer}, is computer-assisted. Theorem \ref{thm:count4} implies Theorem \ref{thm:polyadiskN2}.

\medskip

In all cases, we deal with estimating a (weighted) count of (shifted) lattice points under the graph of a particular function $G_\lambda$. Such problems have been extensively studied in number theory, going back to the Gauss circle problem. Important  contributions in the general case can be traced through the works of van der Corput \cite{vdC} and Kr\"{a}tzel \cite{kratzel} to some very recent results of Laugesen and Liu \cite{liuthesis, lauliu18a}. In particular, \cite[Proposition 15]{lauliu18b} is directly applicable (with account of the fact that Laugesen and Liu do not count the points on the vertical axis and do not double-count the points inside) to our shifted lattice point count $\Pt_2^\Dir(\lambda)$, yielding the bound
\[
\Pt_2^\Dir(\lambda)\le \frac{\lambda^2}{4}+\left(\frac{2}{3}+\frac{1}{\pi}-\frac{\sqrt{3}}{2\pi}\right)\lambda\approx  \frac{\lambda^2}{4}+0.7093\lambda.
\] 
Unfortunately, since the coefficient in front of $\lambda$ in this formula is positive, this bound is weaker than our required bound \eqref{eq:ptD2}.
We need therefore to obtain sharper lattice point count bounds than those available generally, and to do so we additionally use some properties of the \emph{derivative} of the function    $G_\lambda$ in addition to the properties of the function itself, see Theorems \ref{thm:countDdisk} and \ref{thm:countNdisk},  and also Remarks \ref{rem:countDdisk} and \ref{rem:countNdisk} for an informal explanation.

For future use, we summarise below some elementary properties of the function $G_\lambda$.

The first lemma is checked by a direct calculation.
\begin{lemma}\label{lem:Gdiff} 
The function $G_\lambda:[0,\lambda]\to\left[0,\frac{\lambda}{\pi}\right]$ defined by  \eqref{eq:Glambda} is a strictly monotone decreasing convex $C^1$ function with
\[
\begin{alignedat}{3}
&&\qquad G_\lambda(0) &= \frac{\lambda}\pi,&\qquad G_\lambda(\lambda) &= 0,\\
G'_\lambda(z) &= - \frac{1}{\pi}\arccos \frac{z}{\lambda}, &\qquad G'_\lambda(0) &= - \frac{1}{2}, &\qquad G'_\lambda(\lambda) &= 0,\\
G''_\lambda(z) &= \frac{1}{\pi \sqrt{\lambda^2-z^2}}.&&&&
\end{alignedat}
\]
\end{lemma}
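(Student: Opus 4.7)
The statement is a routine direct calculation, as the authors indicate: no clever idea is needed, just careful bookkeeping. My plan is as follows.

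First, I would verify the endpoint values by substitution. At $z=0$, $G_\lambda(0) = \frac{1}{\pi}(\sqrt{\lambda^2} - 0 \cdot \arccos 0) = \lambda/\pi$. At $z = \lambda$, $G_\lambda(\lambda) = \frac{1}{\pi}(0 - \lambda \arccos 1) = 0$, since $\arccos 1 = 0$.

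Next, I would compute $G'_\lambda$ by differentiating each term separately. Using $\frac{\dr}{\dr z}\sqrt{\lambda^2-z^2} = -z/\sqrt{\lambda^2-z^2}$ and the product rule
\[
\frac{\dr}{\dr z}\bigl(z \arccos(z/\lambda)\bigr) = \arccos\frac{z}{\lambda} - \frac{z}{\sqrt{\lambda^2-z^2}},
\]
the two $z/\sqrt{\lambda^2-z^2}$ contributions cancel and I obtain $G'_\lambda(z) = -\frac{1}{\pi}\arccos(z/\lambda)$, valid on $[0,\lambda]$ (with the understanding that at the endpoints one takes one-sided limits). The boundary values $G'_\lambda(0) = -1/2$ and $G'_\lambda(\lambda)=0$ follow from $\arccos 0 = \pi/2$ and $\arccos 1 = 0$. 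Since $\arccos$ is continuous on $[-1,1]$, $G'_\lambda$ is continuous on $[0,\lambda]$, which gives the $C^1$ claim.

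A further differentiation, using $\frac{\dr}{\dr z}\arccos(z/\lambda) = -\frac{1}{\lambda\sqrt{1-z^2/\lambda^2}}$, yields $G''_\lambda(z) = \frac{1}{\pi\sqrt{\lambda^2-z^2}}$ on $(0,\lambda)$. Since $G'_\lambda(z) = -\frac{1}{\pi}\arccos(z/\lambda) < 0$ on $[0,\lambda)$, $G_\lambda$ is strictly monotone decreasing; since $G''_\lambda(z) > 0$ on $(0,\lambda)$, $G_\lambda$ is strictly convex. There is no real obstacle: the only step worth pausing on is the fortuitous cancellation in the first derivative, which is what gives the clean formula $G'_\lambda(z) = -\frac{1}{\pi}\arccos(z/\lambda)$ used repeatedly in the rest of the paper.
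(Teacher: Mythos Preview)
Your proposal is correct and is exactly the direct calculation the paper alludes to; the paper itself gives no proof beyond the remark that the lemma ``is checked by a direct calculation,'' and your write-up simply spells out those details.
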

We can therefore define the inverse function $G^{-1}_\lambda:\left[0,\frac{\lambda}{\pi}\right]\to[0,\lambda]$ which is also monotone decreasing and convex.
Sometimes, it will be also convenient for us to consider $G_\lambda$ on the interval $\left[0, \ceiling{\lambda}\right]$ by extending it by zero to   $\left(\lambda, \ceiling{\lambda}\right]$: the resulting function, which we for simplicity denote by the same symbol, remains monotone decreasing, convex, and $C^1$.

\begin{lemma}\label{lem:Gint}
Let $\beta \ge 0$.
Then 
\[
\int_0^{\lambda} z^\beta G_\lambda(z)\, \dr z = 
\frac{\Gamma \left(\frac{\beta+1}2\right) \lambda^{\beta+2}}
{4 \sqrt\pi \,(\beta+2)\, \Gamma \left(\frac{\beta+4}2\right)}.
\]
In particular,
\begin{equation}\label{eq:Gint2}
\int_0^{\lambda} G_\lambda(z)\, \dr z = \frac{\lambda^2}{8}.
\end{equation}
\end{lemma}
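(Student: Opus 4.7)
The plan is to reduce the integral to a standard trigonometric Beta-function integral via the representation of $G_\lambda$ as an antiderivative. Specifically, since Lemma \ref{lem:Gdiff} gives $G'_\lambda(s) = -\frac{1}{\pi}\arccos\frac{s}{\lambda}$ together with $G_\lambda(\lambda)=0$, I have
\[
G_\lambda(z) = \frac{1}{\pi}\int_z^\lambda \arccos\frac{s}{\lambda}\,\dr s.
\]
Substituting this into $\int_0^\lambda z^\beta G_\lambda(z)\,\dr z$ and applying Fubini on the triangular region $\{0\le z\le s\le \lambda\}$, I will obtain
\[
\int_0^\lambda z^\beta G_\lambda(z)\,\dr z = \frac{1}{\pi(\beta+1)}\int_0^\lambda s^{\beta+1}\arccos\frac{s}{\lambda}\,\dr s.
\]

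Next I would make the substitution $s = \lambda\cos\phi$ (so $\arccos(s/\lambda)=\phi$ and $\dr s = -\lambda\sin\phi\,\dr\phi$) to convert the remaining integral to $\lambda^{\beta+2}\int_0^{\pi/2}\phi\,\cos^{\beta+1}\phi\,\sin\phi\,\dr\phi$. An integration by parts with $u=\phi$ and $\dr v = \cos^{\beta+1}\phi\sin\phi\,\dr\phi$ kills the boundary term at both endpoints (since $\cos(\pi/2)=0$) and reduces it to $\frac{1}{\beta+2}\int_0^{\pi/2}\cos^{\beta+2}\phi\,\dr\phi$. The latter is a classical Wallis integral evaluated by the Beta function:
\[
\int_0^{\pi/2}\cos^{\beta+2}\phi\,\dr\phi = \frac{\sqrt{\pi}}{2}\,\frac{\Gamma\!\left(\frac{\beta+3}{2}\right)}{\Gamma\!\left(\frac{\beta+4}{2}\right)}.
\]

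Assembling these pieces gives
\[
\int_0^\lambda z^\beta G_\lambda(z)\,\dr z = \frac{\lambda^{\beta+2}\,\Gamma\!\left(\frac{\beta+3}{2}\right)}{2\sqrt{\pi}\,(\beta+1)(\beta+2)\,\Gamma\!\left(\frac{\beta+4}{2}\right)},
\]
and the stated form then follows from the identity $\Gamma\!\left(\frac{\beta+3}{2}\right) = \frac{\beta+1}{2}\Gamma\!\left(\frac{\beta+1}{2}\right)$, which cancels the factor $(\beta+1)$. The special case \eqref{eq:Gint2} is obtained by plugging $\beta=0$ into the final formula, using $\Gamma(1/2)=\sqrt{\pi}$ and $\Gamma(2)=1$. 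There is no real obstacle here — the argument is entirely routine calculus once the Fubini reformulation is made; the only mild care needed is in checking that the boundary term in the integration by parts vanishes at $\phi=0$ (trivially from the factor $\phi$) and at $\phi=\pi/2$ (from the factor $\cos^{\beta+2}\phi$, which uses $\beta\ge 0$).
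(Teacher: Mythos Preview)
Your proof is correct. It is essentially the same computation as the paper's, with a minor reordering of the two key steps: the paper substitutes $z=\lambda\cos\tau$ directly in $\int_0^\lambda z^\beta G_\lambda(z)\,\dr z$, which produces two Beta-type integrals (one from the $\sqrt{\lambda^2-z^2}$ term and one from the $z\arccos(z/\lambda)$ term) that are evaluated separately and then combined, whereas you first integrate by parts in $z$ (your Fubini step, using $G_\lambda(z)=\frac{1}{\pi}\int_z^\lambda\arccos(s/\lambda)\,\dr s$) and only then substitute $s=\lambda\cos\phi$, arriving at a single Wallis integral $\int_0^{\pi/2}\cos^{\beta+2}\phi\,\dr\phi$. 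Your ordering is marginally more economical, but both routes rely on the same ingredients (the trigonometric substitution, one integration by parts to remove the factor $\phi$, and the Beta/Wallis evaluation), so there is no substantive difference.
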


\begin{proof}
In fact, the identity can be checked using computer algebra software, but we include a proof for the sake of completeness.
After a change of variables $z = \lambda \cos\tau$, we obtain
\begin{equation}
\label{eq:21}
\int_0^{\lambda} z^\beta G_\lambda(z)\, \dr z = \frac{\lambda^{\beta+2}}{\pi}
\int_0^{\pi/2} (\cos \tau)^\beta (\sin\tau - \tau\cos\tau) \sin\tau\,d\tau .
\end{equation}
By \cite[Eqs. 5.12.1--2]{dlmf},
\[
\int_0^{\pi/2} (\cos \tau)^\rho (\sin\tau)^\sigma \dr \tau 
= \frac{\Gamma\left(\frac{\rho+1}{2}\right) \Gamma\left(\frac{\sigma+1}{2}\right)}
{2\Gamma\left(\frac{\rho+\sigma+2}{2}\right)}
\]
for any $\rho, \sigma\ge 0$.
Therefore,
\[
\int_0^{\pi/2} (\cos \tau)^\beta (\sin\tau)^2 \dr\tau 
= \frac{\sqrt{\pi}\Gamma\left(\frac{\beta+1}{2}\right)}
{4\Gamma\left(\frac{\beta+4}{2}\right)},
\]
\[
\int_0^{\pi/2} \tau (\cos \tau)^{\beta+1} \sin\tau d\tau 
= - \left.\frac{\tau (\cos\tau)^{\beta+2}}{\beta+2}\right|_0^{\pi/2}
+ \frac1{\beta+2} \int_0^{\pi/2} (\cos \tau)^{\beta+2} d\tau 
= \frac{\sqrt\pi \,\Gamma \left(\frac{\beta+3}2\right)}
{2\,(\beta+2) \,\Gamma \left(\frac{\beta+4}2\right)},
\]
and so
\[
\int_0^{\pi/2} (\cos \tau)^\beta (\sin\tau - \tau\cos\tau) \sin\tau\,\dr\tau
= \frac{\sqrt{\pi}\Gamma\left(\frac{\beta+1}{2}\right)}
{4(\beta+2)\Gamma \left(\frac{\beta+4}{2}\right)}.
\]
Substituting this into \eqref{eq:21} we get the result.
\end{proof}

\begin{cor}\label{cor:GintW} Let $d\in\mathbb{N}$, $d\ge 2$. Then 
\[
\frac{2}{(d-2)!} \int_0^{\lambda} z^{d-2} G_\lambda(z)\, \dr z=W_d(\lambda)=w_d\lambda^d.
\]
\end{cor}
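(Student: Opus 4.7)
The plan is to apply Lemma~\ref{lem:Gint} directly with $\beta=d-2$ and then to simplify the resulting Gamma-function coefficient to $w_d$ using the Legendre duplication formula. This reduces the corollary to a one-line $\Gamma$-function identity, so there is essentially no obstacle.

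Setting $\beta=d-2$ in Lemma~\ref{lem:Gint} immediately yields
\[
\frac{2}{(d-2)!}\int_0^\lambda z^{d-2}G_\lambda(z)\,\dr z=\frac{\Gamma\left(\frac{d-1}{2}\right)\lambda^d}{2\sqrt{\pi}\,d\,(d-2)!\,\Gamma\left(\frac{d+2}{2}\right)}.
\]
By the definition \eqref{eq:Wd} of $W_d$ and $w_d$, the claim of the corollary is therefore equivalent to the Gamma-function identity
\[
2^d\,\Gamma\left(\tfrac{d-1}{2}\right)\Gamma\left(\tfrac{d+2}{2}\right)=2d\,(d-2)!\,\sqrt{\pi}.
\]

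To verify this identity, I would first use $\Gamma((d+2)/2)=(d/2)\Gamma(d/2)$ to absorb the factor of $d$ on the right-hand side, reducing the claim to
\[
\Gamma\left(\tfrac{d-1}{2}\right)\Gamma\left(\tfrac{d}{2}\right)=2^{2-d}\sqrt{\pi}\,(d-2)!,
\]
which is precisely the Legendre duplication formula $\Gamma(z)\Gamma(z+\tfrac12)=2^{1-2z}\sqrt{\pi}\,\Gamma(2z)$ applied at $z=(d-1)/2$, together with $\Gamma(d-1)=(d-2)!$. The only thing requiring any care is the bookkeeping of powers of $2$ and $\sqrt{\pi}$; once this is checked, the corollary is established. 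Note also that \eqref{eq:Gint2} already covers the case $d=2$ separately (the prefactor $\frac{2}{(d-2)!}$ becoming $2$), providing a useful sanity check on the computation.
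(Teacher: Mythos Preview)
Your proof is correct and follows essentially the same route as the paper: apply Lemma~\ref{lem:Gint} with $\beta=d-2$, then reduce the resulting Gamma-function expression to $w_d$ via the Legendre duplication formula (the paper invokes it as \cite[Eq.~5.5.5]{dlmf}) together with $\Gamma\!\left(\tfrac{d+2}{2}\right)=\tfrac{d}{2}\,\Gamma\!\left(\tfrac{d}{2}\right)$. The only cosmetic difference is that you phrase the second step as verifying an identity while the paper substitutes directly.
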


\begin{proof} Applying Lemma \ref{lem:Gint} with $\beta=d-2$ we get
\[
\frac{2}{(d-2)!} \int_0^{\lambda} z^{d-2} G_\lambda(z)\, \dr z= \frac{\Gamma \left(\frac{d-1}2\right) \lambda^d}
{2 \,d\,\sqrt\pi\, (d-2)! \, \Gamma \left(\frac{d+2}2\right)} .
\]
The duplication formula \cite[Eq. 5.5.5]{dlmf}
\[
\sqrt{\pi}(d-2)! = \sqrt{\pi}\Gamma(d-1) = 2^{d-2} \Gamma\left(\frac{d-1}{2}\right) \Gamma\left(\frac{d}{2}\right)
\]
implies
\[
\frac{\Gamma\left(\frac{d-1}{2}\right) \lambda^d}{2d\sqrt{\pi}(d-2)!\,\Gamma\left(\frac{d+2}2\right)}
= \frac{ \lambda^d}
{2^{d-1} \,d\,  \Gamma \left(\frac{d}2\right) \Gamma \left(\frac{d+2}2\right)}
= \frac{ \lambda^d}
{2^d \left(\Gamma \left(\frac{d+2}2\right)\right)^2}=W_d(\lambda).
\]
\end{proof}

An important role in our study in the Neumann case will be played by the inverse function value $G_\lambda^{-1}\left(\frac{1}{4}\right)$ (defined for all $\lambda\ge \frac{\pi}{4}$). We will use the following bounds.

\begin{lemma}\label{lem:Ginv} We have
\begin{equation}\label{eq:Ginv1}
G_\lambda^{-1}\left(\frac{1}{4}\right)< \lambda-1
\end{equation}
for all $\lambda\ge 2$. Additionally, for any $\sigma\in\left(0, \frac{\pi}{2}\right]$ we have
\begin{equation}\label{eq:Ginv2}
G_\lambda^{-1}\left(\frac{1}{4}\right)\ge \lambda\cos\sigma
\end{equation}
whenever
\begin{equation}\label{eq:Ginv3}
\lambda\ge r_1(\sigma):=\frac{\pi}{4(\sin\sigma-\sigma\cos\sigma)}.
\end{equation}
\end{lemma}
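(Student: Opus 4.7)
The plan is to reduce both claims to equivalent assertions about $G_\lambda$ itself, using the strict monotonicity of $G_\lambda$ from Lemma \ref{lem:Gdiff}. The principal tool is the parametric form of $G_\lambda$ obtained by setting $z = \lambda\cos\theta$ with $\theta \in [0, \pi/2]$, namely
\[
G_\lambda(\lambda\cos\theta) = \frac{\lambda}{\pi}(\sin\theta - \theta\cos\theta),
\]
which follows immediately from the definition \eqref{eq:Glambda}.

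The lower bound \eqref{eq:Ginv2} is then immediate. Since $G_\lambda$ is strictly decreasing, $G_\lambda^{-1}(1/4) \ge \lambda\cos\sigma$ is equivalent to $G_\lambda(\lambda\cos\sigma) \ge 1/4$, which by the parametric form rewrites as $\frac{\lambda}{\pi}(\sin\sigma - \sigma\cos\sigma) \ge \frac{1}{4}$. The factor $\sin\sigma - \sigma\cos\sigma$ is positive on $(0, \pi/2]$, since it vanishes at $\sigma = 0$ and has positive derivative $\sigma\sin\sigma$, so the inequality is precisely $\lambda \ge r_1(\sigma)$.

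For the upper bound \eqref{eq:Ginv1}, it again suffices by monotonicity to show $f(\lambda) := G_\lambda(\lambda-1) < \frac{1}{4}$ for all $\lambda \ge 2$. Setting $\theta_\lambda := \arccos(1 - 1/\lambda) \in (0, \pi/3]$ and using $\sin\theta_\lambda = \sqrt{2\lambda-1}/\lambda$, one has
\[
f(\lambda) = \frac{\sqrt{2\lambda-1}}{\pi} - \frac{(\lambda-1)\theta_\lambda}{\pi}.
\]
The strategy is to combine a rigorous check of the base case with monotonicity of $f$ on $[2, \infty)$. For the base case, $f(2) = \frac{\sqrt{3}}{\pi} - \frac{1}{3}$, and $f(2) < \frac14$ reduces to the rational inequality $12\sqrt{3} < 7\pi$, which is comfortably true since $12\sqrt{3} < 20.8 < 21.9 < 7\pi$. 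For monotonicity, using $\frac{\mathrm{d}}{\mathrm{d}\lambda}\arccos(1 - 1/\lambda) = -\frac{1}{\lambda\sqrt{2\lambda-1}}$, a direct computation in which the term $\frac{1}{\sqrt{2\lambda-1}}$ and the term $\frac{\lambda-1}{\lambda\sqrt{2\lambda-1}}$ combine cleanly gives
\[
f'(\lambda) = \frac{1}{\pi}\left(\frac{\sqrt{2\lambda-1}}{\lambda} - \theta_\lambda\right) = \frac{\sin\theta_\lambda - \theta_\lambda}{\pi} < 0,
\]
since $\sin x < x$ for $x > 0$.

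The only mildly delicate point is the bookkeeping in the computation of $f'(\lambda)$, where the derivatives of $\sqrt{2\lambda-1}$ and of $(\lambda-1)\arccos(1-1/\lambda)$ telescope into the clean expression $(\sin\theta_\lambda - \theta_\lambda)/\pi$; everything else is routine, and I do not foresee any serious obstacle.
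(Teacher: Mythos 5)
Your proposal is correct and follows essentially the same route as the paper: both reduce the two inequalities to $G_\lambda(\lambda-1)<\tfrac14$ and $G_\lambda(\lambda\cos\sigma)\ge\tfrac14$ via monotonicity, verify the base case at $\lambda=2$, and show $\tfrac{\dr}{\dr\lambda}G_\lambda(\lambda-1)<0$ by computing the same derivative. Your presentation of the monotonicity step via $\theta_\lambda=\arccos(1-1/\lambda)$ and $\sin\theta_\lambda<\theta_\lambda$ is a slightly cleaner phrasing of the paper's equivalent estimate $\cos\bigl(\tfrac1\lambda\sqrt{2\lambda-1}\bigr)>1-\tfrac1\lambda$, but the argument is the same.
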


\begin{proof}
Since $G_\lambda$ is monotone decreasing, the claim \eqref{eq:Ginv1} is equivalent to
\begin{equation}\label{eq:Ginv4}
G_\lambda(\lambda-1)<\frac{1}{4}.
\end{equation}
We have $G_2(1)-\frac{1}{4}=\frac{\sqrt{3}}{\pi}-\frac{7}{12}<0$, and additionally
\[
\frac{\dr}{\dr\lambda}\left(G_\lambda(\lambda-1)\right) = \frac{1}{\pi}\left(\frac{1}{\lambda}\sqrt{2\lambda-1}-\arccos\left(1-\frac{1}{\lambda}\right)\right)<0
\]
as
\[
\cos\left(\frac{1}{\lambda}\sqrt{2\lambda-1}\right)>1-\frac{2\lambda-1}{2\lambda^2}>1-\frac{1}{\lambda},
\] 
thus implying \eqref{eq:Ginv4} for $\lambda\ge 2$. 

Similarly, the claim \eqref{eq:Ginv2} is equivalent to 
\[
G_\lambda(\lambda\cos\sigma)=\frac{\lambda(\sin\sigma-\sigma\cos\sigma)}{\pi}\ge \frac{1}{4},
\]
given \eqref{eq:Ginv3}. 
\end{proof}

\section{{Proof of Theorem \ref{thm:count1}}}\label{sec:proofD2}

We first state the following 
\begin{theorem}\label{thm:countDdisk}
Let $b>0$, and let $g$ be a non-negative decreasing convex function on $[0,b]$ such that  $g(b) = 0$ and
\begin{equation}\label{eq:gprime}
\left|g(z)-g(w)\right|\le \frac{1}{2}|z-w|
\end{equation}
for all $z,w\in[0,b]$.
Then
\begin{equation}\label{eq:gmaincount}
\entire{g(0)+\frac{1}{4}} + 2\sum_{m=1}^{\entire{b}} \entire{g(m)+\frac{1}{4}} \le
2\int_0^b g(z)\,\dr z.
\end{equation}
The equality is possible only if $g$ is identically zero on $[0,b]$.
\end{theorem}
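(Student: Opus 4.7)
My plan is to pass to the inverse $g^{-1}\colon[0,g(0)]\to[0,b]$, for which the hypothesis $|g'|\le 1/2$ translates to the ``dual'' Lipschitz bound $|(g^{-1})'|\ge 2$ a.e. First, by interchanging the order of summation (counting horizontal slices rather than vertical columns), I would establish the identity
\[
\sum_{m=1}^{\entire{b}} \entire{g(m)+\tfrac{1}{4}} = \sum_{k=1}^{K} \entire{g^{-1}(k-\tfrac{1}{4})}, \qquad K:=\entire{g(0)+\tfrac{1}{4}},
\]
so that the left-hand side of \eqref{eq:gmaincount} becomes $S := K + 2\sum_{k=1}^{K} \entire{g^{-1}(k-1/4)}$. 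The key per-slice bound is then obtained by combining the midpoint inequality $\int_{k-1}^{k} g^{-1}(y)\,\dr y \ge g^{-1}(k-1/2)$ (valid by convexity of $g^{-1}$) with the inverse-Lipschitz estimate $g^{-1}(k-1/2)\ge g^{-1}(k-1/4)+1/2$. After taking integer parts this yields $2\entire{g^{-1}(k-1/4)}+1 \le 2\int_{k-1}^{k} g^{-1}$, valid for any $k \le g(0)$. In the generic case $K\le g(0)$, summing over $k=1,\dots,K$ immediately gives $S \le 2\int_0^{K} g^{-1} \le 2\int_0^{g(0)} g^{-1} = 2\int_0^b g$, as desired.

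The main obstacle is the boundary case $g(0)\in[K-1/4,K)$, in which the per-slice bound applies only for $k=1,\dots,K-1$, and the residual estimate $1+2\entire{g^{-1}(K-1/4)}\le 2\int_{K-1}^{g(0)} g^{-1}$ must be proved separately. Setting $a := g^{-1}(K-1/4)$ and $\delta := g(0)-K+1/4\in[0,3/4)$, one first notes $a \ge 2\delta$ directly from the inverse-Lipschitz bound. Among convex functions on $[K-1,g(0)]$ passing through $(K-1/4,a)$ and $(g(0),0)$ with slope $\le -2$, I would argue (via a piecewise-linear extremizer analysis parameterised by the location of a possible corner) that $\int_{K-1}^{g(0)} g^{-1}$ is minimized by the affine function of slope $-a/\delta$, giving the explicit bound $\int_{K-1}^{g(0)} g^{-1} \ge a(\delta+3/4)^2/(2\delta)$. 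The required inequality $1+2\entire{a} \le a(\delta+3/4)^2/\delta$ then follows (using $\entire{a}\le a$) from the AM--GM estimate $\delta + 9/(16\delta)\ge 3/2$ on $(0,\infty)$, with equality at $\delta=3/4$; the two sub-cases $\entire{a}=0$ and $\entire{a}\ge 1$ are handled uniformly by this estimate.

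For the equality statement, tracing tightness through both the midpoint and inverse-Lipschitz bounds forces $g^{-1}$ to be affine with slope exactly $-2$ on each relevant interval, i.e.\ $g(z)=(b-z)/2$; a direct calculation for this specific function yields $S = b^2/2 - b/2 < b^2/2 = 2\int_0^b g$ for every $b>0$, so equality in \eqref{eq:gmaincount} holds only when $g\equiv 0$.
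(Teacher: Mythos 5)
Your plan takes a genuinely different route from the paper. The paper works directly with $g$, slicing vertically: its Lemma 5.2 compares a discrete sum to $\int g$ over each interval where $g$ drops by one integer, via a case analysis ($K=0,1,2,\ge3$) on the number of abscissae $m$ with $g(m)\ge n+\tfrac34$. You instead pass to $g^{-1}$ and slice horizontally, which packages the Lipschitz hypothesis as $(g^{-1})'\le -2$ and lets the Hermite--Hadamard midpoint inequality plus the slope bound do all the work in the per-slice estimate $2\entire{g^{-1}(k-\tfrac14)}+1 \le 2\int_{k-1}^k g^{-1}$. That reorganisation is elegant, the double-counting identity is correct, the generic case $K\le g(0)$ goes through cleanly, and your extremizer analysis of the residual interval (minimising $\int_{K-1}^{g(0)} g^{-1}$ over convex $h$ with $h'\le -2$, $h(K-\tfrac14)=a$, $h(g(0))=0$) does in fact give the affine minimiser of slope $-a/\delta$ once one checks that the gain over $[K-\tfrac14,g(0)]$ from steeper initial slopes is outweighed by the loss over $[K-1,K-\tfrac14]$.

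However, the boundary case has a genuine gap. You claim the inequality $1+2\entire{a}\le a(\delta+\tfrac34)^2/\delta$ follows from $\entire{a}\le a$ together with the AM--GM bound $(\delta+\tfrac34)^2/\delta\ge 3$, handling $\entire{a}=0$ and $\entire{a}\ge1$ ``uniformly''. That reduction gives $1+2\entire{a}\le 3a$, which in the sub-case $\entire{a}=0$ requires $a\ge\tfrac13$; but $a$ is only known to satisfy $a\ge 2\delta$, which can be arbitrarily small. As written the argument fails whenever $\entire{a}=0$ and $a<\tfrac13$. The fix is immediate with the ingredient you already derived but did not use here: when $\entire{a}=0$ invoke $a\ge 2\delta$ to get $a(\delta+\tfrac34)^2/\delta \ge 2(\delta+\tfrac34)^2\ge \tfrac{9}{8}>1$. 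Relatedly, the formula $a(\delta+\tfrac34)^2/(2\delta)$ is undefined at $\delta=0$ (which forces $a=0$); this degenerate case must be treated directly, e.g.\ via $g^{-1}(y)\ge 2(g(0)-y)$ giving $\int_{K-1}^{g(0)}g^{-1}\ge(\tfrac34)^2$, so $2\int\ge\tfrac98>1$. Both gaps are local and readily closed, but the ``uniform'' claim is incorrect as stated and needs to be replaced by the $a\ge 2\delta$ argument.
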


\begin{remark}\label{rem:countDdisk}
We explain here, very informally,  the ideas behind the proof of Theorem \ref{thm:countDdisk}.
The area under the graph of the function $g$ on the interval
$[m,m+1]$ is approximately equal to the area under the straight
line passing through the points $(m, g(m))$ and $(m+1, g(m+1))$,
so 
\[
\int_m^{m+1} g(z) \dr z \approx \frac{1}{2} \left(g(m) + g(m+1)\right).
\]
Summing up these equalities over $m$ we obtain
\[
2 \int_0^b g(z) \dr z \approx g(0) + 2 \sum_{m=1}^{\entire{b}} g(m).
\]
If a number $x$ is chosen randomly then $\entire{x} \approx x -\frac{1}{2}$
on average. Thus, $\entire{g(m)+\frac{1}{4}}\approx g(m) -\frac{1}{4}$, and these
extra contributions of $ -\frac{1}{4}$ ensure the sign of the inequality in \eqref{eq:gmaincount}.
In order to prove Theorem \ref{thm:countDdisk} rigorously we divide the graph
by horizontal lines $y=n$, with $n = 0,1, \dots, \entire{g(0)}$, and
we consider what happens in the intervals where
$n+1 \ge g(z) \ge n$. The values of $\entire{g(m)+1/4}$ there are either 
$n$ or  $n+1$. The point $m$ is ``bad'' if $g(m) \ge n +\frac{3}{4}$ and thus  $\entire{g(m)+1/4}=n+1$: these ``bad'' points contribute more to the sum than we expect ``on average''.
The convexity of the function $g$ and condition \eqref{eq:gprime} ensure that the number
of such ``bad'' points is not greater than half of the total number of integer points 
in an interval, and this yields the required estimate
in the interval we are considering.
\end{remark}

In order to prove Theorem \ref{thm:countDdisk} we require the following 
\begin{lemma}\label{lem:Dcount}
Let $i, j \in \mathbb{Z}$, $i<j$.
Let $g$ be a decreasing convex function on $[i, j+1]$ satisfying \eqref{eq:gprime} for all $z,w\in[i,j+1]$. 
Assume additionally that
\begin{equation}\label{eq:gassumpt1}
n+1 \ge g(i+1) \ge \dots \ge g(j) \ge n \ge g(j+1)
\end{equation}
for some $n\in\ZZ$. Then
\begin{equation}\label{eq:32}
\frac{1}{2}\entire{g(i)+\frac{1}{4}}+\sum_{m=i+1}^{j-1} \entire{g(m)+\frac{1}{4}}+\frac{1}{2}\entire{g(j)+\frac{1}{4}} \le\int_{i}^{j} g(z)\,\dr z.
\end{equation}
\end{lemma}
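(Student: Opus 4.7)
Setting $h(z)=g(z)-n$ reduces the lemma to showing
\[
\tfrac12\alpha_i+\sum_{m=i+1}^{j-1}\alpha_m+\tfrac12\alpha_j\le\int_i^j h(z)\,\dr z,
\]
where $\alpha_m:=\entire{h(m)+1/4}$. The hypotheses give $h(m)\in[0,1]$ for $m\in[i+1,j]$, and $h(i)\le h(i+1)+1/2\le 3/2$ from \eqref{eq:gprime}, so each $\alpha_m\in\{0,1\}$, with $\alpha_m=1$ iff $h(m)\ge 3/4$. Since $h$ is decreasing, the indices with $\alpha_m=1$ form an initial segment $\{i,i+1,\ldots,k\}$ of $[i,j]$ (possibly empty). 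The case $k=j$ is excluded because $\alpha_j=1$ would force $h(j)\ge 3/4$ and then $h(j+1)\ge h(j)-1/2\ge 1/4>0$ by \eqref{eq:gprime}, contradicting $h(j+1)\le 0$. If the segment is empty, the LHS vanishes and the bound follows from $h\ge h(j)\ge 0$ on $[i,j]$.

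Assume now $k\in\{i,\ldots,j-1\}$ and write $K:=k-i$, so the LHS equals $K+1/2$; I must establish $\int_i^j h\ge K+1/2$. My plan is to locate the continuous crossing $z^\ast\in[k,k+1]$ with $h(z^\ast)=3/4$, select a subgradient $\tau\in[-1/2,0]$ of $h$ at $z^\ast$ (which exists by convexity together with the monotonicity and Lipschitz bounds on $h'$), and exploit the tangent inequality $h(z)\ge\tfrac34+\tau(z-z^\ast)$ valid for all $z\in[i,j+1]$. Applying this at $z=j+1$ together with $h(j+1)\le 0$ gives $|\tau|\ge\tfrac{3}{4(j+1-z^\ast)}$. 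Integrating the tangent bound on $[i,z^\ast]$ and using on $[z^\ast,j]$ the estimate $h\ge\max\bigl(0,\tfrac34+\tau(z-z^\ast)\bigr)$ (combining the tangent with $h\ge h(j)\ge 0$), I obtain
\[
\int_i^j h\ge\tfrac34(z^\ast-i)+\tfrac{|\tau|(z^\ast-i)^2}{2}+\int_{z^\ast}^j\max\bigl(0,\tfrac34-|\tau|(z-z^\ast)\bigr)\,\dr z.
\]

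The main obstacle is to confirm this bound is at least $K+1/2$ across all admissible $|\tau|\in\bigl[\tfrac{3}{4(j+1-z^\ast)},\tfrac12\bigr]$ and across the ranges of $\delta':=z^\ast-i\in[K,K+1)$ and $J:=j-i$. A useful auxiliary step is the ``bad points $\le$ half'' inequality $K\le J/2$: for $K\ge 2$ it follows by monitoring slope magnitudes, which are non-increasing in $m$ by convexity, have total sum $h(i+1)-h(k)\le 1/4$ on $[i+1,k]$, and therefore bound the slopes on $[k,j+1]$ from above, where the drop $h(k)-h(j+1)\ge 3/4$ forces $j-k\ge 3K-4$ and hence $J\ge 4K-4\ge 2K$; for $K\in\{0,1\}$ the bound $J\ge 2K$ is automatic. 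The case analysis then splits according to whether the capped tangent reaches zero inside $[z^\ast,j]$ and whether $z^\ast$ lies to the right or left of the midpoint $(i+j)/2$, with the tight cases (equality) occurring precisely when $h$ is piecewise linear of slope $-1/2$ with $K\in\{0,1\}$.
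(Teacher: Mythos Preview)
Your reduction to $n=0$, the identification of the initial segment of indices with $\alpha_m=1$, the exclusion of $k=j$, and the auxiliary bound $J\ge 2K$ (obtained by comparing secant slopes on $[i+1,k]$ and $[k,j+1]$) are all correct and in the same spirit as the paper. The gap is that the argument stops short of its goal: after deriving the tangent-line lower bound you merely assert that a case analysis ``splits according to whether the capped tangent reaches zero inside $[z^\ast,j]$ and whether $z^\ast$ lies to the right or left of the midpoint,'' without carrying it out. Verifying
\[
\tfrac34\,\delta'+\tfrac{|\tau|\,(\delta')^2}{2}+\int_{0}^{J-\delta'}\max\!\Bigl(0,\tfrac34-|\tau|t\Bigr)\,\mathrm{d}t\ \ge\ K+\tfrac12
\]
uniformly over $\delta'\in[K,K+1)$, $|\tau|\in\bigl[\tfrac{3}{4(J+1-\delta')},\tfrac12\bigr]$ and $J\ge\max(1,2K)$ is a genuine (if elementary) two-parameter optimisation whose several subcases you have not actually treated; the hints you give do not amount to a proof. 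Your description of the equality cases is also slightly off: for your $K=1$ the tight configurations allow any slope in $[-\tfrac12,-\tfrac38]$, not only $-\tfrac12$ (see Remark~\ref{rem:equality}).

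By contrast, the paper avoids tangent lines and optimisation altogether. It splits directly on the number of bad points (the paper's $K$ equals your $K+1$) into four short cases $K=0,1,2,\ge3$, each handled via the midpoint inequality $\int_a^b h\ge(b-a)\,h\bigl(\tfrac{a+b}{2}\bigr)$ for convex $h$ together with the Lipschitz bound. For instance, for $K\ge3$ a single convex-combination inequality gives $h(i+2K-2)>0$, hence $j\ge i+2K-2$, and then $\int_i^j h\ge(2K-2)\,h(i+K-1)\ge\tfrac{3(K-1)}{2}\ge K-\tfrac12$. Your tangent-line route can very likely be completed, but it is considerably more work than this direct argument.
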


\begin{proof}[Proof of Lemma \ref{lem:Dcount}]
The validity of the claim does not change if we add a constant integer number to the function $g$.
So, without loss of generality we can assume $n=0$, so that \eqref{eq:gassumpt1} becomes
\begin{equation}\label{eq:gassumpt2}
1 \ge g(i+1) \ge \dots \ge g(j) \ge 0 \ge g(j+1).
\end{equation}
Additionally, \eqref{eq:gprime} implies that $g(i)\le g(i+1)+\frac{1}{2}\le \frac{3}{2}$. 

Set 
\[
K=\#\left\{m\in\{i,\dots,j\}: \frac{3}{4} \le g(m)\right\},
\]
and consider four cases.

\begin{description}
\item[Case $K=0$.]  The left-hand side of  \eqref{eq:32} is zero, and the right-hand side is non-negative by \eqref{eq:gassumpt2}. 

\item[Case $K=1$.]  Here 
\[
\frac{5}{4}> g(i)\ge \frac{3}{4} > g(i+1)\ge\dots\ge g(j)\ge 0,
\] 
and the left-hand side of  \eqref{eq:32} is equal to $\frac{1}{2}$. The assumption \eqref{eq:gprime}  yields $g\left(i+\frac{1}{2}\right)\ge \frac{1}{2}$,
therefore by non-negativity and convexity of $g$ on $[i,j]$,
\[
\int_i^j g(z)\,\dr z  \ge \int_i^{i+1} g(z)\,\dr z\ge  g\left(i+\frac{1}{2}\right) \ge \frac{1}{2}.
\]

\item[Case $K=2$.]
Here 
\[
\frac{3}{2}\ge g(i)\ge g(i+1)\ge \frac{3}{4} > g(i+2)\ge \dots \ge g(j)\ge 0\ge g(j+1),
\] 
and the left-hand side of  \eqref{eq:32} equals $\frac{3}{2}$. By  \eqref{eq:gprime}  we have $j\ge i+2$, and therefore by non-negativity and convexity of $g$,
\[
\int_i^j g(z)\,\dr z \ge \int_i^{i+2} g(z)\,\dr z \ge 2 g(i+1) \ge \frac{3}{2}.
\]

\item[Case $K\ge 3$.]
Here
\[
\frac{3}{2}\ge g(i)\ge g(i+1)\ge \dots \ge g(i+K-1)\ge \frac{3}{4} > g(i+K)\ge \dots \ge g(j)\ge 0\ge g(j+1).
\] 
The left-hand side of  \eqref{eq:32} is equal to $K-\frac{1}{2}$. By convexity of $g$,
\[
(K-1) g(i+1) + (K-2) g(i+2K-2) \ge (2K-3) g(i+K-1),
\]
and therefore
\[
g(i+2K-2) \ge \frac{2K-5}{4(K-2)} > 0.
\]
Thus,  $j\ge i+2K-2$. Next,
\[
\int_i^j g(z)\,\dr z  \ge \int_i^{i+2K-2}g(z)\,\dr z  \ge (2K-2)g(i+K-1)\ge \frac{3K-3}{2}\ge K-\frac{1}{2}
\]
as $K\ge 3$. 
\end{description}
\end{proof}

\begin{remark}\label{rem:equality}
One can easily see from the proof that the equality in \eqref{eq:32} is attained in the following three cases only:
\begin{itemize}
\item $g(z) \equiv n$ on $[i,j]$ (if $K=0$);
\item $j=i+1$ and $g(z) = n + \frac{i-z}{2} + \frac{3}{4}$ on $[i,i+1]$ (if $K=1$);
\item $j=i+2$ and $g(z) = n + s (i+1-z) + \frac{3}{4}$ on $[i,i+2]$ with $s \in \left[\frac{3}{8}, \frac{1}{2}\right]$ (if $K=2$).
\end{itemize}
\end{remark} 

We can now proceed to the proof of Theorem \ref{thm:countDdisk} proper.

\begin{proof}[Proof of Theorem \ref{thm:countDdisk}]
Let 
\[
N = \entire{g(0)}.
\]
If $N=0$, then applying  Lemma \ref{lem:Dcount} with $i=0$, $j=\entire{b}$, and $g(z)$ extended by zero for $z\in\left(b,\entire{b}+1\right]$, gives
\[
\entire{g(0)+\frac{1}{4}}+2\sum_{m=1}^{\entire{b}} \entire{g(m)+\frac{1}{4}}\le 2\int_0^b g(z)\,\dr z,
\]
and therefore \eqref{eq:gmaincount}.

Assume now $N\ge 1$.
For $k = 0, 1, \dots, N$, let 
\begin{equation}\label{eq:Lk}
L_k := \max \left\{m \in \{0,\dots,\entire{b}\}: g(m) \ge k\right\}, 
\end{equation}
see Figure \ref{fig:4}.  Therefore, we have 
\[
0\le L_N< L_{N-1}< \dots < L_0=\entire{b},
\]
where the strict inequalities $L_k<L_{k-1}$, $k=1,\dots,N$, follow from \eqref{eq:gprime}.

\begin{figure}[htpb]
\centering
\includegraphics{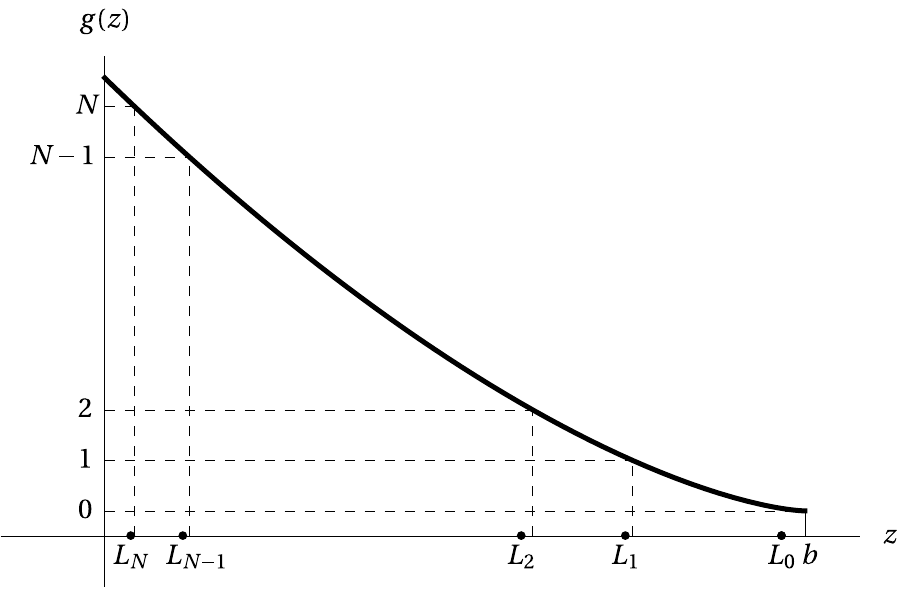}
\caption{The numbers $L_k$, see \eqref{eq:Lk} and also Remark \ref{rem:Lk}. \label{fig:4}}
\end{figure}

We will consider two cases depending on whether $L_N=0$. 
\begin{description}
\item[Case $L_N=0$.]  We write
\begin{equation}\label{eq:36}
\begin{split}
&\qquad\qquad\entire{g(0)+\frac{1}{4}}+2\sum_{m=1}^{\entire{b}} \entire{g(m)+\frac{1}{4}}\\
&= 2\sum_{k=0}^{N-1}\left(\frac{1}{2} \entire{g(L_{k+1})+\frac{1}{4}} + \sum_{m=L_{k+1}+1}^{L_k-1} \entire{g(m)+\frac{1}{4}}+\frac{1}{2} \entire{g(L_{k})+\frac{1}{4}}\right).
\end{split}
\end{equation}
For each $k\in\{0,\dots,N-1\}$ we have
\[
k+1 > g\left(L_{k+1}+1\right) \dots \ge g\left(L_k\right) \ge k\ge  g\left(L_k+1\right),
\]
and applying Lemma \ref{lem:Dcount} with $i=L_{k+1}$, $j=L_k$, and $n=k$ yields
\begin{equation}\label{eq:38}
\frac{1}{2} \entire{g(L_{k+1})+\frac{1}{4}} + \sum_{m=L_{k+1}+1}^{L_k-1} \entire{g(m)+\frac{1}{4}}+\frac{1}{2} \entire{g(L_{k})+\frac{1}{4}}\le \int_{L_{k+1}}^{L_k} g(z)\,\dr z.
\end{equation}
Substituting \eqref{eq:38} into \eqref{eq:36} gives
\begin{equation}\label{eq:381}
\entire{g(0)}+2\sum_{m=1}^{\entire{b}} \entire{g(m)+\frac{1}{4}} \le 2 \int_0^{\entire{b}} g(z)\,\dr z\le 2\int_0^b g(z)\,\dr z, 
\end{equation}
as required, where in the last inequality we used non-negativity of $g$.

\item[Case  $L_N>0$.]  We write
\begin{equation}\label{eq:39}
\begin{split}
&\qquad\qquad\entire{g(0)+\frac{1}{4}}+2\sum_{m=1}^{\entire{b}} \entire{g(m)+\frac{1}{4}}\\
&=2\left(\frac{1}{2}\entire{g(0)+\frac{1}{4}}+ \sum_{m=1}^{L_{N}-1} \entire{g(m)+\frac{1}{4}}+\frac{1}{2} \entire{g(L_N)+\frac{1}{4}}\right)\\
&+ 2\sum_{k=0}^{N-1}\left(\frac{1}{2} \entire{g(L_{k+1})+\frac{1}{4}} + \sum_{m=L_{k+1}+1}^{L_k-1} \entire{g(m)+\frac{1}{4}}+\frac{1}{2} \entire{g(L_{k})+\frac{1}{4}}\right).
\end{split}
\end{equation}
We have
\[
N+1>g(0) \ge \dots \ge g\left(L_{N}-1\right) \ge  g\left(L_{N}\right) \ge N > g\left(L_{N}+1\right),
\]
therefore applying Lemma \ref{lem:Dcount} with $i=0$, $j=L_N$, and $n=N$, we get
\begin{equation}\label{eq:310}
2\left(\frac{1}{2}\entire{g(0)+\frac{1}{4}}+ \sum_{m=1}^{L_{N}-1} \entire{g(m)+\frac{1}{4}}+\frac{1}{2} \entire{g(L_N)+\frac{1}{4}}\right)\le 2\int_0^{L_N} g(z)\,\dr z.
\end{equation}
Substituting \eqref{eq:310} and \eqref{eq:38} into \eqref{eq:39}  gives \eqref{eq:381}.
\end{description}

Finally, assume that we have the equality in \eqref{eq:gmaincount}.
Due to Remark \ref{rem:equality}, the function $g$ is linear on each interval $[L_{k+1}, L_k]$,
and either $\operatorname{dist}(g(L_k), \ZZ) \ge \frac{1}{4}$ or $g \equiv 0$ on $[L_{k+1}, L_k]$.
The situation when $\operatorname{dist}(g(L_k), \ZZ) \ge \frac{1}{4}$ and $g \equiv 0$ on $[L_k, L_{k-1}]$
is impossible due to continuity of $g$.
If $\operatorname{dist}(g(L_k), \ZZ) \ge \frac{1}{4}$ for all $k$, then in particular $g(L_0)=g(\entire{B})\ge \frac{1}{4}$, and  the last inequality in \eqref{eq:381} is strict.  
Therefore, the equality in \eqref{eq:gmaincount} requires $g \equiv 0$ on the whole interval $[0,b]$.
\end{proof}

\begin{remark}\label{rem:Lk} If $g$ is strictly monotone on $[0, b]$, then the inverse function $g^{-1}$ is well-defined on $[0, g(0)]$, and the definitions \eqref{eq:Lk} may be equivalently rewritten as
\[
L_k=\entire{g^{-1}(k)}, \qquad k=0,\dots,\entire{g(0)}.
\]   
\end{remark}

We finally use Theorem \ref{thm:countDdisk} to prove Theorem \ref{thm:count1}. 
\begin{proof}[Proof of Theorem \ref{thm:count1}]
We apply \eqref{eq:gmaincount}  with $b=\lambda$  and $g=G_\lambda$ (which we can do since Lemma \ref{lem:Gdiff} ensures that \eqref{eq:gprime} holds in this case), and use \eqref{eq:Gint2},  giving the bound \eqref{eq:ptD2} and therefore confirming the validity of the Dirichlet P\'olya's conjecture for the disk.
\end{proof}

\section{{Proof of Theorem \ref{thm:count3}}}\label{sec:proofN2}

We start by stating
\begin{theorem}\label{thm:countNdisk}
Let $b>0$, and let $g$ be a non-negative decreasing convex function on $[0,b]$ such that $g(0)\ge \frac{1}{4}$, $g(b)=0$, and \eqref{eq:gprime} holds for all $z,w\in[0,b]$. Let
\[
M_0=M_{g,0}:=1+\max\left\{m\in\{0,\dots,\entire{b}\}: g(m)\ge \frac{1}{4}\right\},
\]
and assume that $M_0\le b$. 
Then 
\begin{equation}\label{eq:60}
\sum_{m=0}^{\entire{b}} \entire{g(m)+\frac{3}{4}}\ge \int_0^b g(z)\,\dr z-\frac{b-3M_0}{8}.
\end{equation}
\end{theorem}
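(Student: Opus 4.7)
My plan is to mirror the structure of the proof of Theorem~\ref{thm:countDdisk}, reversing the direction of all inequalities. The proof would proceed in three steps: (i) establish a local lower bound for a weighted sum of $\entire{g(m) + \frac{3}{4}}$ on sublevel intervals on which $g$ varies between two consecutive integer values; (ii) assemble these local bounds by decomposing $[0, M_0]$ into such sublevel intervals; and (iii) treat the transition near $M_0$ and the tail $[M_0, b]$ directly via the Lipschitz hypothesis.

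For step (i), the analog of Lemma~\ref{lem:Dcount} would state: for integers $i < j$, a decreasing convex $g$ on $[i, j+1]$ satisfying \eqref{eq:gprime}, and some integer $n \ge 1$ with $n+1 \ge g(i+1) \ge \cdots \ge g(j) \ge n \ge g(j+1)$, one has
\[
\frac{1}{2}\entire{g(i) + \tfrac{3}{4}} + \sum_{m=i+1}^{j-1} \entire{g(m) + \tfrac{3}{4}} + \frac{1}{2}\entire{g(j) + \tfrac{3}{4}} \ge \int_i^j g(z)\,\dr z.
\]
The argument proceeds by case analysis on the number $K$ of \emph{bad} integer arguments $m \in \{i, \ldots, j\}$ with $g(m) < n + \frac{1}{4}$: these are precisely the indices for which $\entire{g(m) + \frac{3}{4}} = n$ rather than the ``expected'' $n+1$. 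Since $g$ is decreasing, the bad points form a consecutive block at the right end of $[i, j]$; the Lipschitz condition \eqref{eq:gprime} limits how many bad points can occur, while convexity provides a matching upper trapezoidal bound on $\int_i^j g$. The cases $K = 0, 1, 2, \ge 3$ parallel the analogous cases in Lemma~\ref{lem:Dcount}, with the roles of under- and over-estimate swapped.

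For step (ii), I would introduce integer level sets $L_k := \max\{m \in \{0, \ldots, \entire{b}\} : g(m) \ge k\}$ for $k = 1, \ldots, N := \entire{g(0)}$, apply the local lemma on each sublevel interval $[L_{k+1}, L_k]$ (with $L_{N+1} := 0$), and telescope the half-weighted endpoint terms. This yields a lower bound on $\sum_{m=0}^{L_1} \entire{g(m) + \frac{3}{4}}$ in terms of $\int_0^{L_1} g(z)\,\dr z$, with natural adjustments at the endpoints $m = 0$ and $m = L_1$.

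The main obstacle, and the new feature relative to the Dirichlet case, is step (iii): the interval $[L_1, b]$ on which $g$ descends from a value in $[1, 2]$ through the critical threshold $\frac{1}{4}$ (crossed at some point in $[M_0 - 1, M_0]$) down to $g(b) = 0$. For integer arguments $m \ge M_0$ the floor contribution is zero, so the tail $\int_{L_1}^b g(z)\,\dr z$ must be entirely absorbed into the correction term. I would use the Lipschitz bound $g(z) \le (b-z)/2$ (derived from $g(b)=0$) together with the monotonicity estimate $g(z) \le g(M_0) < \frac{1}{4}$ to control this tail by a multiple of $b - M_0$. Matching the resulting constants to the specific coefficient $\frac{1}{8}$ in the claim — with the factor $3M_0$ in the correction emerging from the interplay between the isolated integer $m = M_0 - 1$, the secant estimate over $[M_0 - 1, M_0]$, and the linear tail bound on $[M_0, b]$ — will likely require a subcase split based on whether $b - M_0 \le \frac{1}{2}$, and constitutes the principal constant-tracking step of the proof.
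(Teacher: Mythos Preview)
Your plan diverges from the paper's proof in one decisive structural choice, and that choice is what makes the paper's argument short while yours remains incomplete.

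You propose to slice the range of $g$ at \emph{integer} levels (defining $L_k$ via $g(m)\ge k$) and to prove a local lower bound analogous to Lemma~\ref{lem:Dcount} by a case analysis on the number $K$ of ``bad'' points with $g(m)<n+\tfrac14$. The paper instead slices at the \emph{shifted} levels $k+\tfrac14$, defining
\[
M_k := 1+\max\Bigl\{m: g(m)\ge k+\tfrac14\Bigr\},\qquad k=0,\dots,N-1,\quad M_N:=0.
\]
On each block $[M_{n+1},M_n]$ one then has $n+\tfrac14>g(M_{n+1})\ge\cdots\ge g(M_n-1)\ge n-\tfrac34>g(M_n)$, so \emph{every} floor $\lfloor g(m)+\tfrac34\rfloor$ on the block equals $n$ (with the single exception $m=M_n$). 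The local lemma becomes a two-line trapezoid estimate, with no case split:
\[
\tfrac12\lfloor g(i)+\tfrac34\rfloor+\sum_{m=i+1}^{j-1}\lfloor g(m)+\tfrac34\rfloor+\tfrac12\lfloor g(j)+\tfrac34\rfloor \;\ge\; \int_i^j g(z)\,\dr z+\frac{j-i}{4}-\frac12.
\]
The extra $\frac{j-i}{4}$ term is the whole point: summed over all blocks it gives $\frac{M_0}{4}$, and combined with the one-line tail bound $\int_{M_0}^b g\le\tfrac12 g(M_0)(b-M_0)\le\frac{b-M_0}{8}$ (convexity plus $g(M_0)<\tfrac14$) it yields the correction $-\frac{b-3M_0}{8}$ immediately. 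No subcase split, no constant-tracking.

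Your route, by contrast, has two genuine gaps. First, your local lemma is false as stated: take $i=0$, $j=2$, $n=1$, $g(z)=1.2-0.1z$; then the hypotheses hold but the left side equals $2$ while $\int_0^2 g=2.2$. The missing hypothesis is $g(i)\ge n+1$, which would hold in your telescoping but which you do not assume. Second, and more seriously, because your local lemma (even when repaired) yields only $\ge\int_i^j g$ with no surplus term, the entire $\frac{3M_0}{8}$ must be manufactured in your step~(iii) from the region $[L_1,b]$. You acknowledge this as ``the principal constant-tracking step'' requiring further subcase splits, but you do not carry it out---and it is not obvious that the constants close without borrowing slack you have already spent on $[0,L_1]$. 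The paper's shifted slicing removes this difficulty entirely.
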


\begin{remark}\label{rem:M0} If $g$ is strictly monotone on $[0, b]$, then the inverse function $g^{-1}$ is well-defined on $[0, g(0)]$, and 
\begin{equation}\label{eq:M0inv}
M_0=\entire{g^{-1}\left(\frac{1}{4}\right)}+1,
\end{equation}  
cf.\ Remark \ref{rem:Lk}.
\end{remark}

\begin{remark}\label{rem:countNdisk}
Once more, we first outline a very informal plan of proving Theorem \ref{thm:countNdisk}.  
As we have argued in Remark \ref{rem:countDdisk}, we have
$\entire{g(m) + \frac{3}{4}}\approx g(m) + \frac{1}{4}$, which should in principle ensure the correct inequality sign in \eqref{eq:60}. ``Bad'' points
are now the points with $n \le g(m) \le n+  \frac{1}{4}$. So,
we divide the graph of $g$ by the horizontal lines at 
$y = n + \frac{1}{4}$, where $n = 0, 1, \dots, \entire{g(0)+\frac{3}{4}}$. Again,
this guarantees that the number of ``bad'' points in each resulting interval is less than half the total number of points there.
This still leaves an unresolved issue of points $m$ lying under the tail of the graph of $g$, where $0 \le g(m) < \frac{1}{4}$.
Such points make no contribution to the left-hand side of \eqref{eq:60}, but the tail does contribute to the integral: consider, for example, a toy case of a function $g(z)=\frac{10-z}{80}$ on the interval $[0,10]$. 
To account for that, we subtract
an additional term in the right-hand side of \eqref{eq:60}.
\end{remark}

Before proceeding to the proof of Theorem \ref{thm:countNdisk}, we require
\begin{lemma}\label{lem:62}
Let $i, j\in \mathbb{Z}$, $i<j$. Let $g$ be a decreasing convex function on $[i,j]$.
Assume that
\[
n+\frac{1}{4} > g(i) \ge \dots \ge g(j-1) \ge n-\frac{3}{4} > g(j)
\]
for some $n\in\mathbb{Z}$.
Then
\begin{equation}\label{eq:61}
\frac{1}{2}\entire{g(i) + \frac{3}{4}}
+ \sum_{m=i+1}^{j-1} \entire{g(m) + \frac{3}{4}}
+ \frac{1}{2}\entire{g(j) + \frac{3}{4}}
\ge \int_i^j g(z) \, \dr z + \frac{j-i}{4} - \frac{1}{2}.
\end{equation}
\end{lemma}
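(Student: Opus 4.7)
The plan is to follow the same pattern as the proof of Lemma \ref{lem:Dcount}, but the argument becomes considerably simpler because the hypothesis $g(j-1) \ge n-\frac{3}{4}$ together with $g(i) < n+\frac{1}{4}$ forces $g(m)+\frac{3}{4} \in [n, n+1)$ for every $m \in \{i, i+1, \dots, j-1\}$. I would first reduce to the case $n = 0$ via the substitution $g \mapsto g - n$: under this substitution the left-hand side of \eqref{eq:61} shifts by $-n(j-i)$, since the total weight on the integer points is $\frac{1}{2} + (j-i-1) + \frac{1}{2} = j-i$, and the right-hand side shifts by the same amount because $\int_i^j (g-n)\,\dr z = \int_i^j g\,\dr z - n(j-i)$. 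After this normalisation, the hypothesis reads $\frac{1}{4} > g(i) \ge \dots \ge g(j-1) \ge -\frac{3}{4} > g(j)$.

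\textbf{Collapsing the left-hand side.} In this normalised setting, for every $m \in \{i, i+1, \dots, j-1\}$ we have $g(m) + \frac{3}{4} \in [0, 1)$, so $\entire{g(m)+\frac{3}{4}} = 0$, and the left-hand side of \eqref{eq:61} collapses to the single term $\frac{1}{2}\entire{g(j)+\frac{3}{4}}$. The inequality to be proven becomes
\[
\frac{1}{2}\entire{g(j)+\tfrac{3}{4}} \ge \int_i^j g(z)\,\dr z + \frac{j-i}{4} - \frac{1}{2}.
\]

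\textbf{Chord bound and conclusion.} For the integral I would use the chord bound coming from convexity of $g$ on $[i,j]$: the graph lies below the segment joining $(i, g(i))$ and $(j, g(j))$, giving
\[
\int_i^j g(z)\,\dr z \le \frac{g(i)+g(j)}{2}\,(j-i) < \frac{1/4 + g(j)}{2}\,(j-i),
\]
with the strict inequality using $g(i) < \frac{1}{4}$. Setting $\alpha := g(j) + \frac{3}{4} < 0$, one checks that
\[
\frac{1/4 + g(j)}{2}\,(j-i) + \frac{j-i}{4} - \frac{1}{2} = \frac{(j-i)\alpha}{2} - \frac{1}{2},
\]
so it suffices to prove $\entire{\alpha} \ge (j-i)\alpha - 1$. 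This in turn follows from the standard floor inequality $\entire{\alpha} > \alpha - 1$ together with $\alpha \ge (j-i)\alpha$, which holds since $\alpha < 0$ and $j-i \ge 1$. The only point requiring care in the write-up is the bookkeeping of strict versus non-strict inequalities; in contrast to the Dirichlet lemma, no case analysis according to the number of ``bad'' integer points is needed here.
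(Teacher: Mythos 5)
Your proof is correct. It follows the same basic strategy as the paper's proof (normalise to $n=0$, compute the floors, use the chord/trapezoid bound from convexity), but it is in fact \emph{more careful} than the paper's own argument on one point. The paper asserts flatly that the left-hand side of \eqref{eq:61} equals $(j-i)n - \tfrac{1}{2}$, which requires $\entire{g(j)+\tfrac{3}{4}} = n-1$, i.e.\ $g(j) \ge n - \tfrac{7}{4}$. The stated hypotheses of Lemma~\ref{lem:62} only give $g(j) < n-\tfrac{3}{4}$ with no lower bound, so this equality need not hold; if $g(j)$ drops below $n-\tfrac{7}{4}$ the left side is strictly smaller than $(j-i)n-\tfrac12$, and the paper's one-line comparison no longer closes the argument as written. (It is harmless in context: in the only place the lemma is used, the Lipschitz condition \eqref{eq:gprime} forces $g(j) \ge n-\tfrac{5}{4}$, so the implicit assumption is automatically satisfied.) Your version instead keeps $\alpha = g(j)+\tfrac34 < 0$ as a free quantity, uses $\entire{\alpha} > \alpha - 1 \ge (j-i)\alpha - 1$ (the second inequality coming from $\alpha<0$, $j-i\ge1$), and combines this with the chord bound to get the conclusion for \emph{any} negative $\alpha$. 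So your argument proves the lemma in the full generality of its stated hypotheses, which the paper's two-line proof does not quite do. One small stylistic note: you could streamline by observing directly, as the paper does, that every summand $\entire{g(m)+\tfrac34}$ for $i\le m\le j-1$ equals $n$ and then working with the one nontrivial term; the normalisation to $n=0$ is fine but not necessary.
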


\begin{proof}[Proof of Lemma \ref{lem:62}] The left-hand side of \eqref{eq:61} is equal to $(j-i)n - \frac{1}{2}$.
By convexity of $g$,
\[
\int_i^j g(z) \, \dr z \le \frac{j-i}{2} \left(g(j) + g(i)\right) 
\le (j-i) \left(n - \frac{1}{4}\right).
\]
\end{proof}

\begin{proof}[Proof of Theorem \ref{thm:countNdisk}] As $g(0) \ge \frac{1}{4}$, we have
\begin{equation}\label{eq:NN}
N := \entire{g(0) + \frac{3}{4}} \ge 1.
\end{equation}
For $k = 1, \dots, N-1$, denote
\[
M_k = M_{g,k}:= 1+ \max \left\{m \in \left\{0,\dots,\entire{b}\right\}: g(m) \ge k + \frac14\right\}.
\]
We also denote $M_N = M_{g,N}:=0$.
The assumption \eqref{eq:gprime}  yields $M_k > M_{k+1}$ for all $k=0,\dots, N-1$.
Therefore, by Lemma \ref{lem:62},
\[
\begin{split}
&\qquad \frac12\entire{g(0)+\frac34} +
\sum_{m=1}^{\entire{b}} \entire{g(m)+\frac34} \\
&= \sum_{n=0}^{N-1} \left(\frac12 \entire{g\left(M_{n+1}\right)+\frac34} + \sum_{m=M_{n+1}+1}^{M_n-1} \entire{g(m)+\frac34} + \frac12 \entire{g\left(M_n\right)+\frac34}\right) \\
&\ge \sum_{n=0}^{N-1} \left(\int_{M_{n+1}}^{M_n} g(z)\, \dr z 
+ \frac{M_n - M_{n+1}}4 - \frac12\right)  
= \int_0^{M_0} g(z)\, \dr z  + \frac{M_0}4 - \frac{N}2.
\end{split}
\]
Due to \eqref{eq:NN} we get
\[
\sum_{m=0}^{\entire{b}} \entire{g(m)+\frac34} \ge \int_0^{M_0} g(z)\, \dr z  + \frac{M_0}4.
\]
By convexity of $g$,
\[
\int_{M_0}^b g(z)\, \dr z  \le
\frac{1}{2}g(M_0) (b-M_0) \le \frac{b-M_0}{8},
\]
and we get \eqref{eq:60}.
\end{proof}

We can now proceed to the proof of Theorem \ref{thm:count3}. We start with
\begin{prop}\label{prop:NlambdaNbound}
Let $\lambda\ge 2$. Then
\begin{equation}\label{eq:PtNR2} 
\Pt_2^\Neu(\lambda)> \frac{\lambda^2}{4}+\frac{1}{4}R_2(\lambda),
\end{equation} 
where 
\[
R_2(\lambda):=3G_\lambda^{-1}\left(\frac{1}{4}\right)-\lambda\left(1+\frac{4}{\pi}\right)-3.
\]
\end{prop}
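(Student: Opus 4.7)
The plan is to apply Theorem \ref{thm:countNdisk} directly to the weighted lattice count, with $g = G_\lambda$ and $b = \lambda$. By \eqref{eq:PtDN} together with $\kappa_{2,0} = 1$ and $\kappa_{2,m} = 2$ for $m \ge 1$, we can write
\[
\Pt_2^\Neu(\lambda) = 2\sum_{m=0}^{\entire{\lambda}} \entire{G_\lambda(m)+\tfrac{3}{4}} - \entire{G_\lambda(0)+\tfrac{3}{4}}.
\]
So the first sub-goal is to check the four hypotheses of Theorem \ref{thm:countNdisk} for $g = G_\lambda$ on $[0,\lambda]$: non-negativity, monotonicity, convexity, and $g(\lambda) = 0$ all come straight from Lemma \ref{lem:Gdiff}; the Lipschitz bound \eqref{eq:gprime} follows from $|G'_\lambda(z)| = \tfrac{1}{\pi}\arccos(z/\lambda) \le \tfrac{1}{2}$, again by Lemma \ref{lem:Gdiff}; $G_\lambda(0) = \lambda/\pi \ge 1/4$ since $\lambda \ge 2 > \pi/4$; and $M_0 \le \lambda$ follows from the bound $G_\lambda^{-1}(1/4) < \lambda - 1$ of Lemma \ref{lem:Ginv}, which gives $M_0 = \entire{G_\lambda^{-1}(1/4)}+1 < \lambda$.

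Once Theorem \ref{thm:countNdisk} applies, using $\int_0^\lambda G_\lambda(z)\,\dr z = \lambda^2/8$ from \eqref{eq:Gint2}, I would obtain
\[
2\sum_{m=0}^{\entire{\lambda}} \entire{G_\lambda(m)+\tfrac{3}{4}} \ge \frac{\lambda^2}{4} - \frac{\lambda - 3M_0}{4}.
\]
For the remaining subtracted term, I would use the elementary bound $\entire{G_\lambda(0)+\tfrac{3}{4}} \le G_\lambda(0) + \tfrac{3}{4} = \tfrac{\lambda}{\pi} + \tfrac{3}{4}$, and for the $M_0$ term I would use $M_0 = \entire{G_\lambda^{-1}(1/4)} + 1 > G_\lambda^{-1}(1/4)$, which is strict. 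Combining,
\[
\Pt_2^\Neu(\lambda) > \frac{\lambda^2}{4} + \frac{3 G_\lambda^{-1}(1/4) - \lambda}{4} - \frac{\lambda}{\pi} - \frac{3}{4} = \frac{\lambda^2}{4} + \frac{1}{4}\bigl(3G_\lambda^{-1}(\tfrac{1}{4}) - \lambda(1+\tfrac{4}{\pi}) - 3\bigr),
\]
which is exactly \eqref{eq:PtNR2}.

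There is no real obstacle here: the heavy lifting has already been done in Theorem \ref{thm:countNdisk}, and the proof amounts to unwinding the definitions, verifying the hypotheses (the condition $M_0 \le b$ is the only non-trivial one, and is handled by the dedicated estimate \eqref{eq:Ginv1} in Lemma \ref{lem:Ginv}), substituting $\int_0^\lambda G_\lambda = \lambda^2/8$, and replacing $M_0$ and $\entire{G_\lambda(0)+3/4}$ by the continuous quantities $G_\lambda^{-1}(1/4)$ and $\lambda/\pi + 3/4$ respectively. The strictness in \eqref{eq:PtNR2} comes for free from the strict inequality $M_0 > G_\lambda^{-1}(1/4)$.
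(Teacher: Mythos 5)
Your proof is correct and takes essentially the same route as the paper's: apply Theorem \ref{thm:countNdisk} with $g = G_\lambda$ and $b = \lambda$, use Lemma \ref{lem:Ginv}\eqref{eq:Ginv1} to verify $M_0 \le b$, substitute the integral value from \eqref{eq:Gint2}, and finally replace the integer parts via $x - 1 < \entire{x} \le x$. The only cosmetic difference is that the paper records the intermediate bound $\frac14\bigl(3\entire{G_\lambda^{-1}(\tfrac14)} + 3 - \lambda - 4\entire{\tfrac{\lambda}{\pi} + \tfrac34}\bigr)$ before discarding the floor functions, whereas you discard them one at a time; the content is identical.
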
  

\begin{proof} 
We have, with account of $G_\lambda(0)=\frac{\lambda}{\pi}$,  
\[
\begin{split}
\Pt_2^\Neu(\lambda) &= \entire{G_\lambda(0)+\frac{3}{4}}+2\sum_{m=1}^{\entire{\lambda}}  \entire{G_\lambda(m)+\frac{3}{4}}\\
&=-\entire{\frac{\lambda}{\pi}+\frac{3}{4}}+2\sum_{m=0}^{\entire{\lambda}}  \entire{G_\lambda(m)+\frac{3}{4}}.
\end{split}
\]
We apply Theorem \ref{thm:countNdisk} to the sum in the right-hand side, with $g=G_\lambda$, $b=\lambda$, and 
\[
M_{G_\lambda,0}=\entire{G_\lambda^{-1}\left(\frac{1}{4}\right)}+1
\] 
(see Remark \ref{rem:M0}). Given that $\lambda\ge 2$, we take into account  the bound \eqref{eq:Ginv1} (which ensures that $M_{G_\lambda,0}\le\lambda$), and the value of the integral from Lemma \ref{lem:Gint}, leading to
\[
\Pt_2^\Neu(\lambda)- \frac{\lambda^2}{4}\ge 
\frac{3\entire{G_\lambda^{-1}\left(\frac{1}{4}\right)}+3-\lambda-4\entire{\frac{\lambda}{\pi}+\frac{3}{4}}}{4}.
\]
Finally, we use
\[
x-1<\entire{x}\le x
\]
to obtain \eqref{eq:PtNR2}.  
\end{proof}

We now have
\begin{prop}\label{prop:R2} 
Let $\lambda\ge \Lambda_1$, where $\Lambda_1$ is given by \eqref{eq:Lambda1}. Then $R_2(\lambda)\ge 0$.
\end{prop}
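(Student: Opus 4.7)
The plan is to apply Lemma \ref{lem:Ginv}, specifically the bound \eqref{eq:Ginv2}, with the particular choice $\sigma_0 := \arccos(5/6) \in (0, \pi/2)$. This value is calibrated so that the resulting linear-in-$\lambda$ lower bound on $R_2(\lambda)$ is nonnegative precisely at the threshold $\lambda = \Lambda_1$. Indeed, $\cos\sigma_0 = 5/6$ gives
\[
3\cos\sigma_0 - 1 - \frac{4}{\pi} = \frac{3}{2} - \frac{4}{\pi} = \frac{3\pi - 8}{2\pi},
\]
so that $3/(3\cos\sigma_0 - 1 - 4/\pi) = 6\pi/(3\pi - 8) = \Lambda_1$. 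No other choice of $\sigma$ would produce the threshold $\Lambda_1$ exactly.

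Once Lemma \ref{lem:Ginv} applies at $\sigma_0$, namely $G_\lambda^{-1}(1/4) \geq (5/6)\lambda$ for $\lambda \geq \Lambda_1$, the conclusion follows from direct substitution into the definition of $R_2$:
\[
R_2(\lambda) \geq 3 \cdot \frac{5\lambda}{6} - \lambda\left(1 + \frac{4}{\pi}\right) - 3 = \frac{3\pi - 8}{2\pi}\lambda - 3 \geq \frac{3\pi - 8}{2\pi}\Lambda_1 - 3 = 0.
\]

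The one nontrivial task is to verify the hypothesis \eqref{eq:Ginv3} of Lemma \ref{lem:Ginv} at $\sigma_0$, namely that $\Lambda_1 \geq r_1(\sigma_0)$. Using $\sin\sigma_0 = \sqrt{11}/6$, this reduces after clearing denominators to the explicit inequality
\[
20\arccos(5/6) \leq 4\sqrt{11} + 8 - 3\pi,
\]
whose two sides are approximately $11.714$ and $11.842$. This holds with a modest but comfortable absolute margin, and can be established rigorously by combining standard rational bounds on $\sqrt{11}$ and $\pi$ with an upper bound on $\arccos(5/6)$ obtained from a truncated alternating Taylor series for cosine (for instance, proving $\cos(0.5858) < 5/6$ by using the partial sum through order $x^6$ and bounding the remainder, whence $\arccos(5/6) < 0.5858$). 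This last step is the main technical obstacle: it is elementary but must be carried out with explicit rational arithmetic to avoid reliance on floating-point computations.
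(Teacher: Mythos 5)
Your proposal is correct and follows essentially the same route as the paper's proof: the same choice $\sigma = \arccos(5/6)$, the same application of Lemma \ref{lem:Ginv}, and the same algebraic reduction to $R_2(\lambda) \ge \frac{3\pi-8}{2\pi}\lambda - 3$. The only difference is cosmetic: you spell out the final verification $\Lambda_1 \ge r_1(\sigma_0)$ more explicitly (reducing it to the rational-arithmetic inequality $20\arccos(5/6) \le 4\sqrt{11} + 8 - 3\pi$), whereas the paper simply notes the inequality holds and points to the verified rational approximation machinery of \S\ref{sec:Neumann2computer}.
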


\begin{proof}[Proof of Proposition \ref{prop:R2}] 
Let $\sigma=\arccos\frac{5}{6}\in\left(0,\frac{\pi}{2}\right]$, and let
\[
\lambda\ge r_1(\sigma)=\frac{3\pi/2}{\sqrt{11}-5\arccos\frac{5}{6}}, 
\]
see \eqref{eq:Ginv3}. Then by Lemma \ref{lem:Ginv}, the bound \eqref{eq:Ginv2} holds.
Therefore,
\[
R_2(\lambda)\ge \lambda\left(3\cos\sigma -1-\frac{4}{\pi}\right)-3=\lambda\frac{3\pi-8}{2\pi}-3,
\]
and is non-negative if
\[
\lambda\ge\frac{6\pi}{3\pi-8}=\Lambda_1>0.
\]
As $\Lambda_1>r_1(\sigma)$
(with the inequality easy to prove rigorously using the method of verified rational approximations discussed in \S\ref{sec:Neumann2computer}), the result follows.
\end{proof}

Theorem \ref{thm:count3}  now immediately follows from Propositions \ref{prop:R2} and \ref{prop:NlambdaNbound}.

\section{{Proof of Theorem \ref{thm:count2}}}\label{sec:proofDd}.

We have the following ``dimension reduction'' formula.
\begin{theorem}\label{thm:dreduction}
Let $d\ge 3$. Then 
\begin{equation}\label{eq:dred}
\Pt^\Dir_d(\lambda)=\sum_{n=0}^{\entire{\lambda-\frac{d}{2}+1}}\binom{n+d-3}{d-3}\tilde{\Pt}^\Dir_{n+\frac{d}{2}-1}\left(\lambda\right),
\end{equation}   
where for $r\in[0,\lambda]$ we denote by
\begin{equation}\label{eq:PNDtilde}
\tilde{\Pt}^\Dir_{r}(\lambda):=\entire{\tilde{G}_{\lambda,r}(0)+\frac{1}{4}}+2\sum_{j=1}^{\entire{\lambda-r}}\entire{\tilde{G}_{\lambda,r}(j)+\frac{1}{4}}=
\entire{G_\lambda(r)+\frac{1}{4}}+2\sum_{j=1}^{\entire{\lambda-r}}\entire{G_{\lambda}(j+r)+\frac{1}{4}}
\end{equation}   
the ``standard'' two-dimensional weighted shifted lattice  point count under the graph of the function $\tilde{G}_{\lambda,r}(t):=G_\lambda(t+r)$, $t\in[0,\lambda-r]$. 
\end{theorem}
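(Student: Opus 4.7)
The plan is to expand the right-hand side of \eqref{eq:dred} and collect coefficients term by term. Introduce the shorthand $a_m := \entire{G_\lambda(m+d/2-1)+\tfrac14}$ and $M := \entire{\lambda - d/2 + 1}$; then by \eqref{eq:PtDN} the left-hand side is exactly $\sum_{m=0}^M \kappa_{d,m}\, a_m$. Since $n\in\mathbb{Z}$, one has $\entire{\lambda - n - d/2 + 1} = M - n$, and so setting $r = n + d/2 - 1$ in \eqref{eq:PNDtilde} yields
\[
\tilde{\Pt}^\Dir_{n+d/2-1}(\lambda) \;=\; a_n + 2\sum_{j=1}^{M-n} a_{n+j}.
\]

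Next, I would substitute this into the sum on the right of \eqref{eq:dred}, change the variable $m = n+j$ in the inner sum, and swap the order of summation. The double sum $\sum_{n=0}^{M}\sum_{m=n+1}^{M}$ becomes $\sum_{m=1}^{M}\sum_{n=0}^{m-1}$, turning the right-hand side into
\[
\sum_{m=0}^M \binom{m+d-3}{d-3} a_m \;+\; 2\sum_{m=1}^M a_m \sum_{n=0}^{m-1}\binom{n+d-3}{d-3}.
\]
Invoking the classical hockey stick identity $\sum_{n=0}^{m-1}\binom{n+d-3}{d-3} = \binom{m+d-3}{d-2}$ collects the coefficient of $a_m$ (for $m\ge 1$) as $\binom{m+d-3}{d-3} + 2\binom{m+d-3}{d-2}$, while the coefficient of $a_0$ equals $\binom{d-3}{d-3} = 1 = \kappa_{d,0}$ (using $d\ge 3$).

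The whole identity therefore reduces to the purely combinatorial claim
\[
\kappa_{d,m} \;=\; \binom{m+d-3}{d-3} + 2\binom{m+d-3}{d-2} \qquad\text{for } m\ge 1,
\]
equivalently $\binom{m+d-1}{d-1} - \binom{m+d-3}{d-1} = \binom{m+d-3}{d-3} + 2\binom{m+d-3}{d-2}$. This follows by two applications of Pascal's rule: expand $\binom{m+d-1}{d-1} = \binom{m+d-2}{d-2} + \binom{m+d-2}{d-1}$, then expand each of the two resulting binomials once more to obtain $\binom{m+d-3}{d-3} + 2\binom{m+d-3}{d-2} + \binom{m+d-3}{d-1}$, and cancel the last term against $\binom{m+d-3}{d-1}$. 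There is no substantive obstacle in this proof — it is pure bookkeeping; the only mild subtlety is the identification $\entire{\lambda - n - d/2 + 1} = M - n$, which relies on the integrality of $n$, and ensuring that $r = n + d/2 - 1 \le \lambda$ for every $n$ in the outer range (which follows from $n \le M \le \lambda - d/2 + 1$).
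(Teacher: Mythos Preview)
Your proof is correct and follows essentially the same approach as the paper: both expand the right-hand side, collect the coefficient of each $a_m$, and reduce the identity to a binomial calculation via the hockey stick identity and Pascal's rule. The paper's write-up splits the coefficient slightly differently (as $\binom{m+d-2}{d-2}+\binom{m+d-3}{d-2}$ rather than your $\binom{m+d-3}{d-3}+2\binom{m+d-3}{d-2}$), but these are of course equal by one application of Pascal, and the overall structure is identical.
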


We remark that in comparison to our original definition 
\[
\Pt^\Dir_d(\lambda)=\sum_{m=0}^{\entire{\lambda-\frac{d}{2}+1}} \kappa_{d,m} \entire{G_\lambda\left(m+\frac{d}{2}-1\right)+\frac{1}{4}},
\]
see \eqref{eq:PtDN}, where the weights $\kappa_{d,m}$ are attached at each individual abscissa $m$, the formula in the right-hand side of \eqref{eq:dred} attaches weights $\binom{n+d-3}{d-3}$ to the whole counts $\tilde{\Pt}^\Dir_{n+\frac{d}{2}-1}\left(\lambda\right)$, which we will later estimate using the previously proven Theorem \ref{thm:countDdisk}. Note also that for $\lambda<\frac{d}{2}-1$, the equation \eqref{eq:dred} becomes the trivial identity $0=0$ by our notational convention for sums, see Remark \ref{rem:NDballfinite}.

\begin{proof}[Proof of Theorem \ref{thm:dreduction}]
With account of \eqref{eq:PNDtilde}, the right-hand side of \eqref{eq:dred} reads
\[
\sum_{n=0}^{\entire{\lambda-\frac{d}{2}+1}}\binom{n+d-3}{d-3}\left(\entire{G_\lambda\left(n+\frac{d}{2}-1\right)+\frac{1}{4}}+2\sum_{j=1}^{\entire{\lambda-n-\frac{d}{2}+1}}\entire{G_{\lambda}\left(j+n+\frac{d}{2}-1\right)+\frac{1}{4}}\right).
\]
For a fixed $m\in\{0\}\cup \mathbb{N}$, the contribution of $\entire{G_\lambda\left(m+\frac{d}{2}-1\right)+\frac{1}{4}}$ in this expression appears with the factor
\[
\begin{split}
\binom{m+d-3}{d-3}+2\sum_{j=0}^{m-1} \binom{j+d-3}{d-3}&=\sum_{j=0}^{m} \binom{j+d-3}{d-3}+\sum_{j=0}^{m-1} \binom{j+d-3}{d-3}\\
&=\sum_{i=d-3}^{m+d-3} \binom{i}{d-3}+\sum_{i=d-3}^{m-d-4} \binom{i}{d-3}= \binom{m+d-2}{d-2}+ \binom{m+d-3}{d-2}\\
&=\binom{m+d-1}{d-1}-\binom{m+d-3}{d-1}=\kappa_{d,m},
\end{split}
\]
where we have used the standard identity \cite[Eq. 26.3.7]{dlmf} 
\begin{equation}\label{eq:binom1}
\sum_{i=l}^{r}\binom{i}{l}=\binom{r+1}{l+1},\qquad r\ge l,
\end{equation}
and another identity \cite[Eq. 26.3.5]{dlmf},
\[
\binom{k}{l}=\binom{k+1}{l+1}-\binom{k}{l+1},\qquad k\ge l.
\]
Thus, the contributions of $\entire{G_\lambda\left(m+\frac{d}{2}-1\right)+\frac{1}{4}}$ in both sides of \eqref{eq:dred} coincide.
\end{proof}

Before proceeding to the proof of Theorem \ref{thm:count2} we will introduce some additional notation and state some auxiliary facts which will be required later.
Let, for  $x\ge 0$, 
\[
\Pi_n(x):=\prod_{j=1}^n (x+j)=(x+1)\cdots(x+n)\quad\text{for }n\in\mathbb{N},\qquad \Pi_0(x):=1.
\]
The function $\Pi_n(x)$ is closely related to \emph{Pochhammer's symbol}, or the \emph{rising factorial} $(x)_n:=x\cdots (x+n-1)$  (for which numerous other notation is also used) in the sense that $\Pi_n(x)=(x+1)_n$. We also have
\[
\binom{i+j}{i}=\frac{(i+1)\cdots (i+j)}{j!}=\frac{\Pi_j(i)}{j!}=\frac{\Pi_i(j)}{i!}.
\] 

Let us introduce, for $d\ge 3$, a piecewise-constant function
\begin{equation}\label{eq:fd0}
f_d(t):=\begin{cases}
0,&\qquad\text{if }t<\frac{d}{2}-1,\\
\frac{\Pi_{d-2}(m)}{(d-2)!}=\binom{m+d-2}{d-2},&\qquad\text{if }\entire{t-\frac{d}{2}+1}=m\ge 0,
\end{cases}
\end{equation}
and let
\[
F_d(z):=\int_0^z f_d(t)\,\dr t.
\]

In what follows we will require an upper polynomial bound on $F_d(z)$. If $z<\frac{d}{2}-1$, then $F_d(z)=0$. Let $z\ge \frac{d}{2}-1$ and $m= \entire{z-\frac{d}{2}+1}\ge 0$. 
Then
\begin{equation}\label{eq:Fd1}
\begin{split}
F_d(z)=\int_0^z f_d(t)\,\dr t&=\left(\sum_{k=0}^{m-1}\int_{k+d/2-1}^{k+d/2} f_d(t)\,\dr t\right)+\int_{m+d/2-1}^{z} f_d(t)\,\dr t\\
&=\left(\sum_{k=0}^{m-1}\binom{k+d-2}{d-2}\right)+\left(z-m-\frac{d}{2}+1\right)\binom{m+d-2}{d-2}\\
&=\binom{m+d-2}{d-1}+\left(z-m-\frac{d}{2}+1\right)\binom{m+d-2}{d-2}\\
&=\frac{\Pi_{d-2}(m)}{(d-1)!}\left((d-1)z-(d-2)m-\frac{(d-1)(d-2)}{2}\right),
\end{split}
\end{equation}
where we used \eqref{eq:binom1} to evaluate the sum of binomial coefficients. 

To establish a bound on $F_d(z)$, we apply the AM-GM inequality
\[
l\left(\beta_1\cdots \beta_{\ell}\right)^{1/l}\le \beta_1+\dots+\beta_l,\qquad l\in\mathbb{N},\quad \beta_1,\dots,\beta_l\ge 0, 
\]
with $l=d-1$ and 
\[
\beta_j=\begin{cases}
(m+j)\Pi_{d-1}(m)&\qquad\text{if }j\le d-2,\\
z^{d-1}&\qquad\text{if }j=d-1,
\end{cases}
\]
yielding 
\[
\begin{split}
(d-1)z\Pi_{d-2}(m)&\le \Pi_{d-2}(m)\left((m+1)+\dots+(m+d-2)\right)+z^{d-1}\\
&=\left((d-2)m+\frac{(d-2)(d-1)}{2}\right)\Pi_{d-2}(m)+z^{d-1}.
\end{split}
\]
Collecting together the terms with $\Pi_{d-2}(m)$ and substituting the resulting bound into the right-hand side of \eqref{eq:Fd1}, we deduce that 
\begin{equation}\label{eq:Fbounds}
F_d(z)\le \tilde{F}_d(z):=\frac{z^{d-1}}{(d-1)!}\qquad\text{for all }z\ge 0.
\end{equation}

We will require an auxiliary
\begin{lemma}\label{lem:prodint} 
Let $f$ be a locally integrable function on $[0,+\infty)$, and let $\tilde{F}\in C^1[0,+\infty)$ with $\tilde{F}(0)=0$ be such that
\[
F(z):=\int_0^z f(t)\,\dr t\le \tilde{F}(z)\qquad\text{for all }z\ge 0.
\] 
Let $b>0$, and let $g\in C^1[0,b]$ be a decreasing function such that $g(b)=0$. Then
\begin{equation}\label{eq:fgprod}
\int_0^b f(z)g(z)\,\dr z\le  \int_{0}^b \tilde{F}'(z) g(z)\,\dr z. 
\end{equation}
\end{lemma}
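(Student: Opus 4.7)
The plan is to use integration by parts to move the derivative from $\tilde{F}$ onto $g$, then exploit the pointwise inequality $F \le \tilde F$ together with monotonicity of $g$.

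First I would write
\[
\int_0^b f(z)g(z)\,\dr z = [F(z)g(z)]_0^b - \int_0^b F(z)g'(z)\,\dr z = -\int_0^b F(z)g'(z)\,\dr z,
\]
where the boundary terms vanish because $F(0)=0$ and $g(b)=0$. An analogous integration by parts, which is justified since $\tilde F \in C^1$, yields
\[
\int_0^b \tilde{F}'(z)g(z)\,\dr z = [\tilde{F}(z)g(z)]_0^b - \int_0^b \tilde{F}(z)g'(z)\,\dr z = -\int_0^b \tilde{F}(z)g'(z)\,\dr z,
\]
where again the boundary terms vanish thanks to $\tilde F(0)=0$ and $g(b)=0$.

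Subtracting these two identities gives
\[
\int_0^b \tilde{F}'(z)g(z)\,\dr z - \int_0^b f(z)g(z)\,\dr z = \int_0^b \bigl(F(z)-\tilde{F}(z)\bigr) g'(z)\,\dr z.
\]
The integrand on the right is non-negative: the factor $F(z)-\tilde F(z)$ is non-positive by hypothesis, while $g'(z)\le 0$ because $g$ is decreasing. This establishes \eqref{eq:fgprod}.

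There is no real obstacle here; the only point requiring a moment of care is verifying the boundary contributions in both integrations by parts, which relies precisely on the two assumptions $\tilde F(0)=0$ and $g(b)=0$ stated in the lemma (together with $F(0)=0$, which is automatic from the definition of $F$).
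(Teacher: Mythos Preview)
Your proof is correct and is essentially the same as the paper's: both integrate by parts to replace $f$ and $\tilde F'$ by $F$ and $\tilde F$ against $g'$, then use $F\le\tilde F$ together with $g'\le 0$. The only cosmetic difference is that the paper integrates the difference $\tilde F'-F'$ by parts in one step rather than handling the two integrals separately.
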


\begin{proof}
Integrating by parts, we get
\[
\begin{split}
 \int_{0}^b \tilde{F}'(z) g(z)\,\dr z-  \int_0^b f(z)g(z)\,\dr z&= \int_0^b \left(\tilde{F}'(z)-F'(z)\right) g(z) \,\dr z\\
&= \left.\left(\tilde{F}(z)-F(z)\right)g(z)\right|_0^b-\int_0^b \left(\tilde{F}(z)-F(z)\right)g'(z) \,\dr z\ge 0,
\end{split}
\]
since $\tilde{F}(0)=F(0)=g(b)=0$ and $\left(\tilde{F}(z)-F(z)\right)g'(z)\le 0$, thus proving \eqref{eq:fgprod}.
\end{proof}

We now proceed to the proof of Theorem \ref{thm:count2} proper. 

\begin{proof}[Proof of Theorem \ref{thm:count2}]  First of all, note that for $\lambda<\frac{d}{2}-1$ there is nothing to prove as in this case $\Pt^\Dir_d(\lambda)=0$.
For $\lambda\ge \frac{d}{2}-1$, we apply Theorem \ref{thm:countDdisk} with $g=\tilde{G}_{\lambda,r}$, $r=n+\frac{d}{2}-1$, and $b=\lambda-r=\lambda-n-\frac{d}{2}+1$ to the right-hand side of \eqref{eq:PNDtilde}, giving
\[
\tilde{\Pt}^\Dir_{n+\frac{d}{2}-1} (\lambda)< 2\int_0^{\lambda-n-\frac{d}{2}+1} G_\lambda\left(t+n+\frac{d}{2}-1\right)\,\dr t = 2\int_{n+\frac{d}{2}-1}^\lambda G_\lambda(z)\,\dr z
\]
for $n=0,\dots,\entire{\lambda-\frac{d}{2}+1}$. We now substitute the results into \eqref{eq:dred}, yielding, with account of \eqref{eq:binom1}, the bound
\begin{equation}\label{eq:PDdbound1}
\begin{split}
\Pt^\Dir_d(\lambda)&<2\sum_{n=0}^{\entire{\lambda-\frac{d}{2}+1}}\binom{n+d-3}{d-3}\int_{n+\frac{d}{2}-1}^\lambda G_\lambda(z)\,\dr z\\
&=2\int_{\frac{d}{2}-1}^\lambda \left(\sum_{n=0}^{\entire{z-\frac{d}{2}+1}}\binom{n+d-3}{d-3}\right)G_\lambda(z)\,\dr z\\
&=2\int_{\frac{d}{2}-1}^\lambda \binom{\entire{z-\frac{d}{2}+1}+d-2}{d-2} G_\lambda(z)\,\dr z.
\end{split}
\end{equation}
Using notation \eqref{eq:fd0}, we rewrite \eqref{eq:PDdbound1} as
\begin{equation}\label{eq:PDdbound2}
\Pt^\Dir_d(\lambda)<2\int_0^\lambda f_d(z)G_\lambda(z)\,\dr z.
\end{equation}
We now apply Lemma \ref{lem:prodint}  to the right-hand side of \eqref{eq:PDdbound2}, taking $f=f_d$, $g=G_\lambda$, and $\tilde{F}(z)=\tilde{F}_d(z)=\frac{z^{d-1}}{(d-1)!}$ by \eqref{eq:Fbounds},  which gives
\[
\Pt^\Dir_d(\lambda) < 2\int_0^\lambda \tilde{F}'_d(z)G_\lambda(z)\,\dr z=\frac{2}{(d-2)!}\int_0^\lambda z^{d-2}G_\lambda(z)\,\dr z.
\]
Finally, applying Corollary \ref{cor:GintW} gives
\[
\Pt^\Dir_d(\lambda) < W_d(\lambda)
\]
as required.
\end{proof}
 
\section{{Closing the gap: the proof of Theorem \ref{thm:count4}}}\label{sec:Neumann2computer}

We describe the algorithm (based on the two Principles stated in \S\ref{sec:WL}) of verifying the statement
\begin{equation}\label{eq:Nboundneeded}
 \Pt_2^\Neu(\lambda)>\frac{\lambda^2}{4}\qquad\text{for all }\lambda\in[3,14],
\end{equation}
with the lattice point counts given explicitly by
\[
\Pt^\Neu_2(\lambda)=\sum_{m=0}^{\entire{\lambda}}\kappa_{2,m}\entire{G_\lambda\left(m\right)+\frac{3}{4}}.
\]
 
The first Principle is realised with the help of the following simple lemma, which ensures that our algorithm described below requires only a finite number of steps.
 
\begin{lemma} 
If the inequality \eqref{eq:Nboundneeded} holds for a particular $\lambda_0$, that is, we have 
\[
e(\lambda_0):=\Pt_2^\Neu(\lambda_0)-\frac{\lambda_0^2}{4}>0, 
\]
this inequality also holds  for all 
$\lambda\in\left(\lambda_0,\lambda_0+\delta(\lambda_0)\right)=\left(\lambda_0,\sqrt{\lambda_0^2+4e(\lambda_0)}\right)$,
where
\[
\delta(\lambda_0):=\sqrt{\lambda_0^2+4e(\lambda_0)}-\lambda_0.
\]
\end{lemma}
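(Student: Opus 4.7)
The plan is to exploit monotonicity of $\Pt_2^\Neu(\lambda)$ in $\lambda$: once one establishes that the shifted lattice point count is a non-decreasing function of $\lambda$, the lemma reduces to a trivial quadratic inequality in $\lambda$.

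First I would verify the monotonicity. For each fixed $z$, the partial derivative
\[
\frac{\partial G_\lambda(z)}{\partial \lambda}=\frac{\sqrt{\lambda^2-z^2}}{\pi\lambda}\ge 0
\]
shows that $G_\lambda(z)$ is non-decreasing in $\lambda$ on $[z,+\infty)$ (this is in the spirit of the computations summarised in Lemma \ref{lem:Gdiff}). Hence for every fixed integer $m\ge 0$, $\entire{G_\lambda(m)+\tfrac34}$ is a non-decreasing function of $\lambda$. Moreover, as $\lambda$ crosses an integer $m$, the corresponding term enters the sum defining $\Pt_2^\Neu(\lambda)$ with the value $\kappa_{2,m}\entire{G_m(m)+\tfrac34}=\kappa_{2,m}\entire{\tfrac34}=0$, so no jumps are created by the change in the upper summation limit, and every new term is non-negative thereafter. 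Summing contributions, the map $\lambda\mapsto \Pt_2^\Neu(\lambda)$ is non-decreasing on $[0,+\infty)$.

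With monotonicity in hand, the main claim is immediate. For any $\lambda>\lambda_0$ we have
\[
\Pt_2^\Neu(\lambda)\ge \Pt_2^\Neu(\lambda_0)=\frac{\lambda_0^2}{4}+e(\lambda_0),
\]
and therefore $\Pt_2^\Neu(\lambda)>\frac{\lambda^2}{4}$ holds whenever
\[
\frac{\lambda^2}{4}<\frac{\lambda_0^2}{4}+e(\lambda_0),\qquad\text{i.e.},\qquad \lambda<\sqrt{\lambda_0^2+4e(\lambda_0)}=\lambda_0+\delta(\lambda_0).
\]
This is exactly the interval claimed in the statement.

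I do not expect any serious obstacle here — the content of the lemma is essentially bookkeeping. The only subtlety worth flagging carefully is that the sum in $\Pt_2^\Neu(\lambda)$ has a $\lambda$-dependent upper limit $\entire{\lambda}$, so one must confirm (as above) that the terms added when $\lambda$ passes an integer do not violate monotonicity; but since such a term enters with value zero, there is nothing to worry about.
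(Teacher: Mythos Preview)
Your proof is correct and follows essentially the same approach as the paper: both rely on the fact that $\Pt_2^\Neu(\lambda)$ is non-decreasing in $\lambda$, after which the result is a trivial quadratic computation. The paper simply asserts this monotonicity without justification, whereas you spell it out (including the harmless behaviour at integer $\lambda$), so your argument is a slightly more detailed version of the same proof.
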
 

\begin{proof}
The result immediately follows from the facts that $\Pt^\Neu_2(\lambda)$ is  non-decreasing in $\lambda$ and that $\Lambda=\lambda_0+\delta(\lambda_0)$ is the positive root of the equation
$\frac{\Lambda^2}{4}=\frac{\lambda_0^2}{4}+e(\lambda_0)$.
\end{proof}

To implement the second Principle, we work with rational numbers only. Let, for $x\in\mathbb{R}$, $\underline{x}\le x\le \overline{x}$, where $\underline{x}, \overline{x}\in\Q$ are some lower and upper rational approximations of $x$. The function $G_\lambda(z)$ does not as a rule take rational values even for rational $\lambda$ and $z$, so to overcome this we work instead with 
\[
\underline{\Pt}^\Neu_2(\lambda)=\sum_{m=0}^{\entire{\lambda}}\kappa_{2,m}\entire{\underline{G}_\lambda\left(m\right)+\frac{3}{4}}, \qquad\lambda\in\Q,
\]
where
\[
\underline{G}_\lambda(z)=\frac{1}{\overline{\pi}}\left(\underline{\sqrt{\lambda^2-z^2}}-z\arccosU\frac{z}{\lambda}\right),\qquad z\in\mathbb{Q}\cap[0,\lambda].
\]

Of course, $\underline{G}_\lambda(z)\le G_\lambda(z)$ and therefore $\underline{\Pt}^\Neu_2(\lambda)\le \Pt^\Neu_2(\lambda)$, for $\lambda,z\in\Q$.
Obviously, taking integer parts of rational numbers, as well as other arithmetic operations on them is exact and does not introduce any numerical errors. We now describe how to construct \emph{verified rational approximations} of the square roots $\underline{\sqrt{\cdot}}$ and arccosines $\arccosU(\cdot)$. For the former, any guess (say, obtained from numerics) can be directly verified by taking squares and comparing rationals, which is rigorous. To verify our approximations of arccosines (taken at rational points) one may proceed as follows. Define the functions $\cosL, \cosU: \Q\cap\left(0,\frac{\pi}{2}\right]\to\Q$ as
\[
\cosU x := T_{12}[\cos](x), \qquad 
\cosL x := T_{14}[\cos](x),
\]
where $T_{t}[\cos](x)$ is the Taylor polynomial of $\cos x$ at $x=0$ of degree $t$, so that $\cosL x<\cos x<\cosU x$, see, for example, \cite[Problem 15]{Dor}.\footnote{We thank the referee for pointing out this reference.} 
Then 
\[
\cosU\left(\overline{\beta}\right)<x<\cosL\left(\underline{\beta}\right)\text{\qquad implies\qquad}\underline{\beta}:=\arccosL x<\beta=\arccos x<\arccosU x=:\overline{\beta}, 
\]
and the verification is again reduced to elementary operations on rationals. In the same manner,
\[
\underline{\pi}=3\arccosL\frac{1}{2}\quad\text{and}\quad \overline{\pi}=3\arccosU\frac{1}{2}
\]
provide verified rational approximations for $\pi$.

To finish describing our process, we need also to rationalise the square root appearing in the definition of $\delta(\lambda)$: 
we effectively  replace $e(\lambda)$ by a smaller number 
\begin{equation}\label{eq:eL}
\underline{e}(\lambda):=\underline{\Pt}^\Neu_2(\lambda)-\frac{\lambda^2}{4},
\end{equation} 
and also replace 
$\delta(\lambda)$ 
by a smaller number 
\begin{equation}\label{eq:deltaL}
\underline{\delta}(\lambda):= \underline{\sqrt{\lambda^2+4\underline{e}(\lambda)}}-\lambda,
\end{equation}  
where a verification is again by taking squares.

\begin{remark}\label{rem:fractions} In practice, we use the following process to find lower and upper rational approximations of a number $x\in\mathbb{R}$ (which may be a square root, or an arccosine). Throughout, we fix a relatively small number $\epsilon$ (say, $\epsilon=10^{-3}$) as an accuracy parameter. We find numerically some approximation $x_0$ of $x$  (which may be above or below $x$) with some better accuracy.  Then, we define $\underline{x}$ as the rational number in the interval $[x_0-3\epsilon, x_0-\epsilon]$  with the smallest possible denominator, and $\overline{x}$ as the rational number in the interval $[x_0+\epsilon, x_0+3\epsilon]$ with the smallest possible denominator, using a modification of a fast algorithm for traversing the Stern--Brocot tree \cite{Fori}. As we always verify the resulting approximations using the procedures described above, we do not in fact depend on the quality of an original numerical ``guess'' $x_0$ as long as $|x_0-x|<\epsilon$. 
\end{remark}

Thus, our main algorithms work as follows. In order to prove \eqref{eq:Nboundneeded} for $\lambda\in[\Lambda_0, \Lambda_1]$, we move upwards: set $\lambda=\underline{\Lambda_{0}}$, compute the margin $\underline{e}(\lambda)$ from \eqref{eq:eL}, set $\lambda_\text{new}=\lambda+\underline{\delta}\left(\lambda\right)$ using \eqref{eq:deltaL}, and continue the process.  If the margins are positive on each step, the process will stop successfully if after a finite number of steps we reach  $\lambda> \overline{\Lambda_{1}}$, see Figure \ref{fig:algorithms}.
  
\begin{figure}[htb]
\begin{center}\begin{minipage}{0.5\textwidth}
\begin{algorithm}[H]
\begin{algorithmic}
\State $\lambda\gets\underline{\Lambda_{0}}$
\State $\text{stepnumber}\gets 0$
\While{$\lambda\le \overline{\Lambda_{1}}$} 
\State $\text{stepnumber}\gets \text{stepnumber}+1$
\State $p\gets  \underline{\Pt}^\Neu_2(\lambda)$
\State $e\gets p-\frac{\lambda^2}{4}$
\If{$e>0$}
\State 
\State $\lambda\gets \underline{\sqrt{\lambda^2+4e}}$
\Else
\State \textbf{print} ``Proof failed  \Frowny{}''; \textbf{exit}
\EndIf
\EndWhile
\State \textbf{print} ``Success in ",  \text{stepnumber}, ``steps \Smiley{}''
\end{algorithmic}
\caption*{\textbf{The algorithm for proving \eqref{eq:Nboundneeded}}}
\end{algorithm}
\end{minipage}
\end{center}
 
\

\caption{The basic algorithm.}\label{fig:algorithms}
\end{figure}

The algorithm works extremely fast (when implemented in \texttt{Mathematica}, see the footnote on the title page), thus proving Theorem \ref{thm:count4}: in principle, with enough patience the whole implementation can be done by hand.
We summarise its outcomes in  Table \ref{table:2}. 
\begin{table}[htb]
{\setlength{\tabcolsep}{10pt}
\centering
\begin{longtable}{@{}>{$}c<{$}*{3}{>{$}c<{$}}@{}}
\toprule
\text{Step}&\lambda&\underline{e}(\lambda)&\underline{\delta}(\lambda)\\\addlinespace[1.5ex]
\midrule
 1 & 3<\Lambda_0 & \frac{3}{4} & \frac{6}{13} \\\addlinespace[1.5ex]
 2 & \frac{45}{13} & \frac{1355}{676} & \frac{223}{221} \\\addlinespace[1.5ex]
 3 & \frac{76}{17} & \frac{868}{289} & \frac{584}{493} \\\addlinespace[1.5ex]
 4 & \frac{164}{29} & \frac{3368}{841} & \frac{995}{783} \\\addlinespace[1.5ex]
 5 & \frac{187}{27} & \frac{11687}{2916} & \frac{29}{27} \\\addlinespace[1.5ex]
 6 & 8 & 3 & \frac{43}{60} \\\addlinespace[1.5ex]
 7 & \frac{523}{60} & \frac{57671}{14400} & \frac{227}{260} \\\addlinespace[1.5ex]
 8 & \frac{374}{39} & \frac{6098}{1521} & \frac{719}{897} \\\addlinespace[1.5ex]
 9 & \frac{239}{23} & \frac{10591}{2116} & \frac{339}{368} \\\addlinespace[1.5ex]
 10 & \frac{181}{16} & \frac{4103}{1024} & \frac{11}{16} \\\addlinespace[1.5ex]
 11 & 12 & 6 & \frac{24}{25} \\\addlinespace[1.5ex]
 12 & \frac{324}{25} & \frac{2506}{625} & \frac{241}{400} \\\addlinespace[1.5ex]
 13 & \frac{217}{16} & \frac{7183}{1024} & \frac{271}{272} \\\addlinespace[1.5ex]
 14 & \frac{495}{34}>\Lambda_1 & & \\\addlinespace[1.5ex]
  \bottomrule\\\addlinespace[2ex]
   \caption{Detailed output of the computer-assisted algorithm.}\label{table:2}
\end{longtable}}
\end{table}

\section{{Proof of Theorem \ref{thm:polyasector}}}\label{sec:sectors}
Let $S_\alpha$ be a circular sector of aperture $0<\alpha\le 2\pi$. The eigenvalues of the Dirichlet and Neumann Laplacians on $S_\alpha$ are easily found by separation of variables. They are all simple, and are given by 
\[
\lambda_{m,k}=\left(j_{\frac{m\pi}{\alpha},k}\right)^2\qquad m\in\mathbb{N},\quad k\in\mathbb{N},
\]
and
\[
\mu_{m,k}=\left(j'_{\frac{m\pi}{\alpha},k}\right)^2,\qquad m\in\mathbb{N}\cup\{0\},\quad k\in\mathbb{N},
\]
respectively. Therefore, the corresponding eigenvalue counting functions are
\[
\N^\Dir_{S_\alpha}(\lambda)=\sum_{m=1}^{\entire{\frac{\alpha\lambda}{\pi}}}\#\left\{k\in\mathbb{N}:j_{\frac{m\pi}{\alpha},k}\le\lambda\right\}\qquad\text{and}\qquad
\N^\Neu_{S_\alpha}(\lambda)=\sum_{m=0}^{\entire{\frac{\alpha\lambda}{\pi}}}\#\left\{k\in\mathbb{N}:j'_{\frac{m\pi}{\alpha},k}\le\lambda\right\},
\]
where the both sums are finite since $j_{\nu,1}>j'_{\nu,1}\ge \nu$.

Assume for the moment that the sector $S_\alpha$ contains a half-disk, that is $\alpha\in[\pi,2\pi]$. By Proposition \ref{prop:mainbound} with account of \eqref{eq:Ams}, \eqref{eq:Glambda} and \eqref{eq:saleph}, we have
\begin{equation}\label{eq:NDsectineq}
\N^\Dir_{S_\alpha}(\lambda) \le \sum_{m=1}^{\entire{\frac{\alpha\lambda}{\pi}}}\entire{G_\lambda\left(\frac{m\pi}{\alpha}\right)+\frac{1}{4}}.
\end{equation}
Set
\[
g_{\lambda,\alpha}(t):=G_\lambda\left(\frac{\pi t}{\alpha}\right), \qquad t\in\left[0, b\right], 
\]
where 
\[
b:=\frac{\alpha\lambda}{\pi}.
\]
Then $g_{\lambda,\alpha}$ is a monotone decreasing convex function on $[0,b]$ with $g(b)=0$; moreover,
\[
g'_{\lambda,\alpha}(t)=\frac{\pi}{\alpha}G'_\lambda\left(\frac{\pi t}{\alpha}\right)\in \left[-\frac{\pi}{2\alpha},0\right]\subset\left[-\frac{1}{2},0\right]
\]
due to our assumption $\alpha\ge \pi$. With this notation, the right-hand side of \eqref{eq:NDsectineq} becomes
\[
\sum_{m=1}^{\entire{b}}\entire{g_{\lambda,\alpha}(m)+\frac{1}{4}},
\]
and we can estimate it from above directly by Theorem \ref{thm:countDdisk} with $g=g_{\lambda,\alpha}$, giving
\[
\sum_{m=1}^{\entire{b}}\entire{g_{\lambda,\alpha}(m)+\frac{1}{4}}<\int_0^{\alpha\lambda/\pi}g_{\lambda,\alpha}(t)\,\dr t=\frac{\alpha}{\pi}\int_0^\lambda G_\lambda(z)\,\dr z=\frac{\alpha \lambda^2}{8\pi}. 
\]
Substituting this into \eqref{eq:NDsectineq} proves the Dirichlet P\'olya's conjecture for sectors containing a half-disk.

We now turn to the Neumann problem in $S_\alpha$, still assuming that $\alpha\in[\pi,2\pi]$. Following the same argument as in Lemma \ref{lem:Lambda0}, we conclude that the Neumann P\'{o}lya's conjecture for $S_\alpha$ holds for $\lambda\le 2\sqrt{\frac{6\pi}{\alpha}}$. We therefore may assume that 
\begin{equation}\label{eq:lambdaNsecmin}
\lambda>2\sqrt{\frac{6\pi}{\alpha}}\ge 2\sqrt{3}
\end{equation}
from now on. 

Using once more Proposition \ref{prop:mainbound} yields
\begin{equation}\label{eq:NNsectineq}
\N^\Neu_{S_\alpha}(\lambda) \ge \sum_{m=0}^{\entire{\frac{\alpha\lambda}{\pi}}}\entire{G_\lambda\left(\frac{m\pi}{\alpha}\right)+\frac{3}{4}}=
\sum_{m=0}^{\entire{b}}\entire{g_{\lambda,\alpha}(m)+\frac{3}{4}},
\end{equation}
with the same function $g_{\lambda,\alpha}$ and parameter $b$ as above.
 
We are now going to use Theorem \ref{thm:countNdisk} with $g=g_{\lambda,\alpha}$ to estimate the right-hand side of \eqref{eq:NNsectineq} from below. 
Note that $g_{\lambda,\alpha}(0)=\frac{\lambda}{\pi}>\frac{1}{4}$ by \eqref{eq:lambdaNsecmin}. Also, by \eqref{eq:M0inv}
\[
M_0=M_{g_{\lambda,\alpha},0}=\entire{g_{\lambda,\alpha}^{-1}\left(\frac{1}{4}\right)}+1=\entire{\frac{\alpha}{\pi}G_\lambda^{-1}\left(\frac{1}{4}\right)}+1,
\] 
and in order to apply Theorem \ref{thm:countNdisk}  we need to ensure that $M_0\le b=\frac{\alpha\lambda}{\pi}$. But this is true since, with account of  $\alpha\ge \pi$, 
\[
M_0-b\le \frac{\alpha}{\pi}G_\lambda^{-1}\left(\frac{1}{4}\right)+1-\frac{\alpha\lambda}{\pi}\le \frac{\alpha}{\pi}\left(G_\lambda^{-1}\left(\frac{1}{4}\right)+1-\lambda\right)<0
\]
by \eqref{eq:Ginv1}. Then, \eqref{eq:60} implies
\begin{equation}\label{eq:bMsector}
\sum_{m=0}^{\entire{b}}\entire{g_{\lambda,\alpha}(m)+\frac{3}{4}}\ge \frac{\alpha \lambda^2}{8\pi}-\frac{b-3M_0}{8}.
\end{equation}
We now reason as in the proof of Theorem \ref{thm:count3}: if we can show that $b-3M_0<0$ for all $\lambda\ge 2\sqrt{3}$, this would prove, via the combination of \eqref{eq:NNsectineq} and \eqref{eq:bMsector}, that $\N^\Neu_{S_\alpha}(\lambda)>\frac{\alpha \lambda^2}{8\pi}$. We have
\[
b-3M_0=\frac{\alpha\lambda}{\pi}-3\entire{\frac{\alpha}{\pi}G_\lambda^{-1}\left(\frac{1}{4}\right)}-3
\le \frac{\alpha}{\pi}\left(\lambda-3G_\lambda^{-1}\left(\frac{1}{4}\right)\right)
< \frac{\alpha}{\pi}\left(\lambda-2G_\lambda^{-1}\left(\frac{1}{4}\right)\right).
\]
We now apply the second statement of Lemma \ref{lem:Ginv} with $\sigma=\frac{\pi}{3}$ which guarantees that $b-3M_0<0$ for 
\[
\lambda>r_1\left(\frac{\pi}{3}\right)=\frac{3\pi}{6\sqrt{3}-2\pi}.
\]
As $\frac{3\pi}{6\sqrt{3}-2\pi}<2\sqrt{3}$, this finishes the proof of  Theorem \ref{thm:polyasector} for sectors $S_\alpha$ of aperture $\alpha\in[\pi, 2\pi]$. 

To complete the proof of  Theorem \ref{thm:polyasector} we now need to consider the case $\alpha\in(0,\pi)$. Set
\[
\ell:=\entire{\frac{2\pi}{\alpha}}\ge 2.
\]
Then $\tilde{\alpha}=\ell\alpha\in[\pi,2\pi]$, P\'olya's conjecture holds for $S_{\tilde{\alpha}}$, and $S_\alpha$ tiles  $S_{\tilde{\alpha}}$. By Theorem \ref{thm:tiling}, P\'olya's conjecture holds for $S_\alpha$.

\end{document}